\newcommand{\bcen}{\begin{center}}     \newcommand{\ecen}{\end{center}}
\newcommand{\bay}{\begin{array}}      \newcommand{\eay}{\end{array}}
\newcommand{\beq}{\begin{eqnarray*}}      \newcommand{\eeq}{\end{eqnarray*}}
\def\Coder{\mathrm{Coder}}
\def\comp{\mathrm{comp}}
\def\Der{\mathrm{Der}}
\def\dz{\delta}
\def\End{\mathrm{End}}
\def\ev{\mathrm{ev}}
\def\Hom{\mathrm{Hom}}
\def\id{\mathrm{id}}
\def\Im{\mathrm{Im}}
\def\Ker{\mathrm{Ker}}
\def\op{\mathrm{op}}
\def\per{\mathrm{per}}
\def\proj{\mathrm{proj}}
\def\rad{\mathrm{rad}}
\def\RHom{\mathrm{RHom}}
\def\Tw{\mathrm{Tw}}
\begin{document}

\newtheorem{theorem}{Theorem}
\newtheorem{proposition}{Proposition}
\newtheorem{lemma}{Lemma}
\newtheorem{corollary}{Corollary}
\newtheorem{remark}{Remark}
\newtheorem{example}{Example}
\newtheorem{definition}{Definition}
\newtheorem*{conjecture}{Conjecture}
\newtheorem{question}{Question}

\title{Hochschild (co)homologies of dg $K$-rings and their Koszul duals}

\author{Yang Han, Xin Liu and Kai Wang}

\date{\footnotesize KLMM, Academy of Mathematics and Systems Science,
Chinese Academy of Sciences, \\ Beijing 100190, China. \\ School of Mathematical Sciences, University of
Chinese Academy of Sciences, \\ Beijing 100049, China.\\ E-mail:
hany@iss.ac.cn (Y. Han), liuxin215@mails.ucas.ac.cn (X. Liu),\\ wangkai@amss.ac.cn (K. Wang)}

\maketitle

\begin{abstract} We formulate the (co)bar construction theory of dg $K$-(co)rings
and the calculus theory of the Hochschild homology and cohomology of dg $K$-rings.
As applications, we compare the Hochschild (co)homologies of a complete typical dg $K$-ring and its Koszul dual.
Moreover, we show that the Koszul dual of a finite dimensional complete typical $d$-symmetric dg $K$-ring is a $d$-Calabi-Yau dg algebra
whose Hochschild cohomology is a Batalin-Vilkovisky algebra.
Furthermore, we prove that the Hochschild cohomologies of a finite dimensional complete typical $d$-symmetric dg $K$-ring
and its Koszul dual are isomorphic as Batalin-Vilkovisky algebras.
In conclusion, we found a connection between the Batalin-Vilkovisky algebra structures
on the Hochschild cohomologies of $d$-Calabi-Yau dg algebras and $d$-symmetric dg $K$-rings.
\end{abstract}

\medskip

{\footnotesize {\bf Mathematics Subject Classification (2010)}:
16E40, 16E45, 18E30}

\medskip

{\footnotesize {\bf Keywords} :  Hochschild (co)homology; dg $K$-ring; Koszul dual; Calabi-Yau dg algebra; Batalin-Vilkovisky algebra.}

\bigskip

\tableofcontents

\section{Introduction}

Throughout this paper, $k$ is a field, $K=k^t$ for a positive integer $t$, and all algebras are over $k$.
Gerstenhaber showed that the Hochschild cohomology $HH^\bullet(A)$ of an associative algebra $A$ is a {\it Gerstenhaber algebra},
i.e., $HH^\bullet(A)$ is a graded commutative associative algebra on the cup product $\cup$,
$HH^{\bullet+1}(A)$ is a graded Lie algebra on the Gerstenhaber bracket $[-,-]$,
and $[-,-]$ is a graded derivation on the cup product in each variable \cite{Ger63}.
For some kinds of associative algebras $A$, $HH^\bullet(A)$ is even a {\it Batalin-Vilkovisky (BV for short) algebra},
i.e., there is an operator $\Delta$ on $HH^\bullet(A)$ of degree $-1$ such that
$\Delta^2=0$ and the Gerstenhaber bracket $[-,-]$ is just the deviation of $\Delta$ from being a derivation (See \cite{Hue98,Rog09} and the references therein).
In this aspect, on one hand, Ginzburg proved that the Hochschild cohomology of a Calabi-Yau (CY for short) algebra is a BV algebra \cite{Gin06}.
Abbaspour generalized Ginzburg's result to some $d$-CY differential graded (dg for short) algebras \cite{Abb15},
and Kowalzig and Kr\"{a}hmer generalized it to some twisted CY algebras \cite{KowKra14}.
On the other hand, Tradler showed that the Hochschild cohomology of a symmetric algebra is a BV algebra \cite{Tra08,Men09,Men04}.
Abbaspour generalized Tradler's result to {\it derived Poincar\'{e} duality algebras} which can be viewed as some nice {\it derived $d$-symmetric dg algebras}.
Moreover, the BV algebra structures on the Hochschild cohomologies of some Frobenius algebras were studied in \cite{Yan13,IvaIvaVolZho15,LiuZho16,LamZhoZim16,Vol16}.

Whereas there is a correspondence between some kinds of exact $d$-CY dg algebras and {\it cyclic $A_\infty$-algebras of degree $d$},
which can be viewed as {\it $d$-symmetric $A_\infty$-algebras}, under Koszul duality \cite{VdB15},
there should be some relations between the BV algebra structures on the Hochschild cohomologies
of some kinds of $d$-CY dg algebras and $d$-symmetric dg algebras.
In fact, Chen, Yang and Zhou proved that a Koszul (graded) algebra is $d$-CY if and only if its Koszul dual is a cyclic algebra of degree $d$,
and the Hochschild cohomologies of a Koszul $d$-CY algebra and its Koszul dual are isomorphic as BV algebras \cite{CheYanZho16}.
In this paper, we study such a relation in a more general setting.

Recall that a dg algebra $A$ is {\it locally finite} if every homogeneous component $A_i$ of $A$ is finite dimensional.
A locally finite augmented dg algebra $A$ is {\it typical} if it is either non-negative or non-positive simply connected (i.e., $A_0=k$ and $A_{-1}=0$).
Most of important dg algebras from algebra and topology are typical.
Usually, one has two ways to define the Koszul dual of a locally finite augmented dg algebra.
One is the cobar construction $\Omega A^\vee$ of the graded $k$-dual $A^\vee$ of $A$ as defined in \cite{VdB15,Kel03},
the other is the graded $k$-dual $(BA)^\vee$ of the bar construction $BA$ of $A$ as defined in \cite{Kel94}.
The typicality of a dg algebra above ensures these two definitions of Koszul dual coincide up to isomorphism.
Given a {\it $d$-symmetric dg algebra} $A$, i.e., an augmented dg algebra admitting a dg $A$-bimodule quasi-isomorphism from $A$ to $A^\vee[-d]$.
We wish we could show the Koszul dual $\Omega A^\vee$ of $A$
is a $d$-CY dg algebra and the Hochschild cohomologies of $\Omega A^\vee$ and $A$ are isomorphic as BV algebras.
To this end, we have to prove $\Omega A^\vee$ is homologically smooth \cite{KonSoi09}.
It is here that we need the condition that $A$ is complete.
Recall that a finite dimensional algebra $A$ is {\it elementary}
if the factor algebra $A/\rad A$ of $A$ modulo its Jacobson radical $\rad A$ is isomorphic to $k^t$ for a positive integer $t$ (See \cite{AusReiSma95}).
In fact, if $k$ is an algebraically closed field,
every finite dimensional $k$-algebra is Morita equivalent to a finite dimensional elementary algebra \cite{AssSimSko06}.
A finite dimensional elementary algebra must be isomorphic to a bound quiver algebra \cite{AusReiSma95},
thus one can study its representation theory by the combinatorics, geometry and topology of the quivers.
Note that (bound) quiver algebras play quite important roles in representation theory of algebras \cite{AssSimSko06} and
Calabi-Yau algebra theory \cite{Gin06}. A bound quiver algebra $A=kQ/I$, where $Q$ is a finite quiver and
$I$ is an admissible ideal of the path algebra $kQ$, is certainly an augmented algebra
whose augmentation could be given by a natural projection $A \twoheadrightarrow A/\rad A \twoheadrightarrow k$.
However, $A$ is not complete unless $Q$ is a quiver with only one vertex,
which causes some important known results can not be applied to bound quiver algebras directly,
especially, the difficulty in proving the homological smoothness of $\Omega A^\vee$.
Nonetheless, a bound quiver algebra $kQ/I$ is always a complete $K$-ring where $K=kQ_0 \cong k^{|Q_0|}$.
Now that we hope our main results are applicable to all ordinary symmetric algebras, at least all elementary symmetric algebras,
we have to study (dg) $K$-(co)rings introduced by Sweedler \cite{Swe75} instead of (dg) $k$-algebras.

In this paper, firstly, we will formulate the (co)bar construction theory of dg $K$-(co)rings.
Note that such bar constructions for bound quiver algebras have been already studied by Cibils in \cite{Cil90},
and such (co)bar constructions for augmented dg $K$-(co)rings have been investigated by He in his doctoral thesis \cite{He04}.
Here, we will generalize, complete and improve their results.
Secondly, we will formulate the calculus theory of the Hochschild homology and cohomology of augmented dg $K$-rings mimicking the augmented dg $k$-algebra case in \cite{TamTsy05,TamTsy00,CunSkaTsy04,GelDalTsy89}.
Now that we will adopt the {\it $K$-reduced} two-sided bar resolution in the sense of Cibils \cite{Cil90}
instead of the ordinary two-sided bar resolution of an augmented $k$-algebra, this formulation is necessary.
Furthermore, we will apply the theories above to compare the Hochschild (co)homologies
of a complete typical dg $K$-ring $A$ and its Koszul dual $\Omega A^\vee$.
We will show that $HH^\bullet(\Omega A^\vee) \cong HH^\bullet(A)$, $HH_{-\bullet}(\Omega A^\vee) \cong HH_\bullet(A)^\vee$,
and the latter is compatible with Connes operators (Theorem \ref{Theorem-HHHH_Connes}), which generalize some results in \cite{FelMenTho05}.
Moreover, we will prove that the Koszul dual $\Omega A^\vee$ of a finite dimensional complete typical $d$-symmetric dg $K$-ring $A$
is a $d$-CY dg algebra (Theorem \ref{Theorem-Sym-Dual-CY}).
For later use, we will revise the results that the Hochschild cohomologies of some $d$-CY dg algebras and $d$-symmetric dg $K$-rings
are BV algebras, such that all involved operators are defined in $K$-reduced version.
Finally, we prove that the Hochschild cohomology $HH^\bullet(\Omega A^\vee)$ of the Koszul dual $\Omega A^\vee$
of a finite dimensional complete typical $d$-symmetric dg $K$-ring $A$ is a BV algebra,
and $HH^\bullet(\Omega A^\vee)$ and $HH^\bullet(A)$ are isomorphic as BV algebras (Theorem \ref{Theorem-SymCY-BVIso}).
Restricted to ordinary algebras, our results imply if $A$ is an elementary symmetric algebra then
the Koszul dual $\Omega A^\vee$ is a 0-CY dg algebra, $HH^{\bullet}(\Omega A^\vee)$ is a BV algebra,
and $HH^\bullet(A)$ and $HH^{\bullet}(\Omega A^\vee)$ are isomorphic as BV algebras.
In conclusion, we found a connection between BV algebra structures
on the Hochschild cohomologies of $d$-CY dg algebras and $d$-symmetric dg $K$-rings.

We refer to \cite{FelHalTho01,AvrFoxHal00} for dg algebras and dg homological algebra.
For derived categories of dg algebras, we refer to \cite{Kel94}.
In the differential graded situation, we always use the {\it Koszul sign rule},
i.e., we add the sign $(-1)^{|x||y|}$ once we exchange the positions of two graded objects $x$ and $y$ of degree $|x|$ and $|y|$ respectively.
For example, if $f : V \rightarrow V'$ and $g : W \rightarrow W'$ are two morphisms of graded $k$-vector spaces
then the tensor product $f \otimes g : V \otimes_k W \rightarrow V' \otimes_k W'$ is given by
$(f \otimes g)(v \otimes w) = (-1)^{|g||v|} f(v) \otimes g(w)$ for all $v \in V$ and $w \in W$.
By convention, in a dg $k$-module, an element of lower degree $i \in \mathbb{Z}$ is of upper degree $-i$.
Unless stated otherwise, $\otimes := \otimes_K$.

\section{(Co)bar constructions of dg $K$-(co)rings}

In this section, we formulate the (co)bar construction theory of dg $K$-(co)rings.
In the case of $t=1$, i.e., $K=k$, it is just the classical (co)bar construction theory of $k$-(co)algebras (See \cite{LodVal12,HusMooSta74,FelHalTho01}).

\subsection{Dg $K$-(co)rings}

The concepts of $K$-ring and $K$-coring were introduced by Sweedler in \cite{Swe75}.

\bigskip

\noindent{\bf Dg $K$-rings.} A (unital, associative) {\it $K$-ring} $A = (A, \mu, \eta)$ is a $K$-bimodule $A$ equipped with
two $K$-bimodule morphisms $\mu: A \otimes A \rightarrow A$, called
{\it product}, and $\eta : K \rightarrow A$, called {\it unit},
satisfying $\mu \circ (\mu \otimes \id) = \mu \circ (\id \otimes \mu)$ and $\mu \circ (\eta \otimes \id) = \id = \mu \circ (\id \otimes \eta)$,
i.e., the following two diagrams are commutative:

\medskip

associativity \hspace{20mm} $\xymatrix{ A \otimes A \otimes A \ar[r]^-{\id \otimes \mu} \ar[d]_-{\mu \otimes \id} & A \otimes A \ar[d]^-{\mu} \\
A \otimes A \ar[r]^-{\mu} & A }$

\medskip

unitality \hspace{22mm} $\xymatrix{ & A \otimes A \ar[d]^-{\mu} & \\ K \otimes A \ar[ur]^-{\eta \otimes \id} \ar@{=}[r] & A & A \otimes K. \ar[ul]_-{\id \otimes \eta} \ar@{=}[l] }$

\medskip

\noindent A {\it morphism} from a $K$-ring $(A , \mu, \eta)$ to a $K$-ring $(A' , \mu', \eta')$ is a $K$-bimodule morphism
$f : A \rightarrow A'$ satisfying $f \circ \mu = \mu' \circ (f \otimes f)$ and $f \circ \eta = \eta'$, i.e., the following two diagrams are commutative:
$$\xymatrix{ A \otimes A \ar[r]^-{\mu} \ar[d]_-{f \otimes f} & A \ar[d]^-{f} \\
A' \otimes A' \ar[r]^-{\mu'} & A' } \qquad\qquad \xymatrix{ K \ar[r]^-{\eta} \ar[dr]_-{\eta'} & A \ar[d]^-{f} \\ & A'. }$$
A {\it graded $K$-ring} $A = (A, \mu, \eta)$ is both a graded $K$-bimodule
$A = \bigoplus\limits_{i \in \mathbb{Z}} A_i$ and a $K$-ring $(A, \mu, \eta)$ such that both $\mu$ and $\eta$ are graded $K$-bimodule morphisms.
A {\it morphism of graded $K$-rings} is both a morphism of graded $K$-bimodules and a morphism of $K$-rings.
A {\it differential graded (dg) $K$-ring} $A = (A, \mu, \eta, d)$ is both a dg $K$-bimodule $(A,d)$
whose differential $d : A \rightarrow A$ is of degree $-1$,
and a graded $K$-ring $(A, \mu, \eta)$, such that both $\mu$ and $\eta$ are morphisms of dg $K$-bimodules.
A {\it morphism of dg $K$-rings} is both a morphism of dg $K$-bimodules and a morphism of graded $K$-rings.
An {\it augmented dg $K$-ring} $A = (A, \mu, \eta, d, \varepsilon)$ is a dg $K$-ring $(A, \mu, \eta, d)$ equipped
with a dg $K$-ring morphism $\varepsilon : A \rightarrow K$, called the {\it augmentation} of $A$, satisfying $\varepsilon \circ \eta = \id_K$.
If $A$ is an augmented dg $K$-ring then $A = K 1_A \oplus \overline{A}$ where $1_A=\eta(1_K)$ and $\overline{A} = \Ker \varepsilon$,
called the {\it augmentation ideal} of $A$.
We always identify $K 1_A$ with $K$.
A {\it morphism of augmented dg $K$-rings} is a morphism of dg $K$-rings $f : A \rightarrow A'$ satisfying $\varepsilon_{A'} \circ f = \varepsilon_A$.

\bigskip

\noindent{\bf Dg modules.} Let $(A, \mu, \eta)$ be a $K$-ring. A {\it left $A$-module} $M = (M,\mu^l)$ is a left $K$-module $M$ equipped
with a left $K$-module morphism $\mu^l : A \otimes M \rightarrow M$, called {\it left action},
satisfying $\mu^l \circ (\mu \otimes \id_M) = \mu^l \circ (\id_A \otimes \mu^l)$
and $\mu^l \circ (\eta \otimes \id_M) = \id_M$, i.e., the following two diagrams are commutative:

\medskip

associativity \hspace{20mm} $\xymatrix{ A \otimes A \otimes M \ar[r]^-{\id_A \otimes
\mu^l} \ar[d]_-{\mu \otimes \id_M} & A \otimes M \ar[d]^-{\mu^l} \\ A \otimes M \ar[r]^-{\mu^l} & A }$

\medskip

unitality \hspace{33mm} $\xymatrix{ & A \otimes M \ar[d]^-{\mu^l} \\ K \otimes M \ar[ur]^-{\eta \otimes \id_M} \ar@{=}[r] & M. }$

\medskip

\noindent Similarly, we can define a {\it right $A$-module} $M$ with a {\it right action} $\mu^r : M \otimes A \rightarrow M$.
Let $A$ and $B$ be two $K$-rings. An {\it $A$-$B$-bimodule} $M = (M, \mu^l, \mu^r)$
is not only a $K$-bimodule but also a left $A$-module $(M,\mu^l)$ and a right $B$-module $(M,\mu^r)$,
such that $\mu^r \circ (\mu^l \otimes \id_B) = \mu^l \circ (\id_A \otimes \mu^r)$,
i.e., the following diagram is commutative:
$$\xymatrix{ A \otimes M \otimes B \ar[r]^-{\id_A \otimes \mu^r} \ar[d]^-{\mu^l \otimes \id_B} & A \otimes M \ar[d]^-{\mu^l} \\
M \otimes B \ar[r]^-{\mu^r} & M. }$$
A {\it morphism} from a left $A$-module $(M, \mu^l_M)$ to a left $A$-module $(M',\mu^l_{M'})$ is a left $K$-module morphism
$f : M \rightarrow M'$ satisfying $f \circ \mu^l_M = \mu^l_{M'} \circ (\id_A \otimes f).$
Similarly, we can define a {\it morphism of right modules} and a {\it morphism of bimodules}.
Let $A$ be a graded $K$-ring. A {\it graded left $A$-module} $M = (M, \mu^l)$
is both a graded left $K$-module $M = \oplus_{i \in \mathbb{Z}}M_i$ and a left $A$-module $(M, \mu^l)$ such that $\mu^l$ is a morphism of graded left $K$-modules.
A {\it morphism of graded left $A$-modules} is both a morphism of graded left $K$-modules and a morphism of left $A$-modules.
Similarly, we can define a {\it graded right $A$-module} and a {\it morphism of graded right $A$-modules}.
Let $A$ and $B$ be two graded $K$-rings. A {\it graded $A$-$B$-bimodule} $M = (M, \mu^l, \mu^r)$ is
both a graded $K$-bimodule $M = \bigoplus\limits_{i \in \mathbb{Z}}M_i$ and an $A$-$B$-bimodule $(M, \mu^l, \mu^r)$
such that both $\mu^l$ and $\mu^r$ are morphisms of graded $K$-bimodules.
A {\it morphism of graded $A$-$B$-bimodules} is both a morphism of graded $K$-bimodules and a morphism of $A$-$B$-bimodules.
Let $A$ be a dg $K$-ring. A {\it dg left $A$-module} $M = (M, \mu^l, d_M)$ is
both a dg left $K$-module $(M,d_M)$, whose differential $d_M : M \rightarrow M$ is of degree $-1$, and a graded left $A$-module $(M, \mu^l)$,
such that $\mu^l$ is a morphism of dg left $K$-modules.
A {\it morphism of dg left $A$-modules} is both a morphism of dg left $K$-modules and a morphism of graded left $A$-modules.
Similarly, we can define a {\it dg right $A$-module} and a {\it morphism of dg right $A$-modules}.
Let $A$ and $B$ be two dg $K$-rings. A {\it dg $A$-$B$-bimodule} $M = (M, \mu^l, \mu^r, d)$ is both
a dg $K$-bimodule $(M,d)$ and a graded $A$-$B$-bimodule $(M, \mu^l, \mu^r)$ such that both $\mu^l$ and $\mu^r$ are morphisms of dg $K$-bimodules.
A {\it morphism of dg $A$-$B$-bimodules} is both a morphism of dg $K$-bimodules and a morphism of graded $A$-$B$-bimodules.
For dg $k$-modules $M$ and $M'$, the {\it switching map} $\tau_{M,M'} : M \otimes_k M' \rightarrow M' \otimes_k M$
is given by $\tau_{M,M'}(m \otimes m') := (-1)^{|m||m'|} m' \otimes m$ for all $m \in M$ and $m' \in M'$,
which is a dg $k$-module isomorphism.
For dg $K$-bimodules $M$ and $M'$, we can define the {\it switching map} $M \otimes_{K^e} M' \rightarrow M' \otimes_{K^e} M$ similarly.
It is still denoted by $\tau_{M,M'}$, which can not cause any confusion.

\begin{remark}{\rm A dg $K$-ring $A$ could be viewed as the dg $k$-category $\mathcal{A}$ with objects $e_1,\cdots,e_t$
which is the canonical $k$-basis of $K=k^t$, and Hom-sets $\Hom_\mathcal{A}(e_i,e_j):=e_jAe_i,\ \forall 1\leq i,j\leq t$.
Moreover, a dg right $A$-module is nothing but a dg functor from $\mathcal{A}^\op$ to $\mbox{\rm Dif}k$ (See \cite{Kel94}).
Therefore, we have the derived category theory of dg $K$-rings.
}\end{remark}

\bigskip

\noindent{\bf Opposite dg $K$-rings.}  Let $A$ be a dg $K$-ring. Then $A$ is a dg $K$-bimodule with the left action $\mu^l$ and the right action $\mu^r$.
Denote by $\tilde{A}$ the dg $K$-bimodule $A$ with the left action $\tilde{\mu}^l : K \otimes_k A \xrightarrow{\tau_{K,A}} A \otimes_k K \xrightarrow{\mu^r} A$
and the right action $\tilde{\mu}^r : A \otimes_k K \xrightarrow{\tau_{A,K}} K \otimes_k A \xrightarrow{\mu^l} A$.
Then $\tilde{A}$ is a dg $K$-ring with product $\tilde{\mu} : \tilde{A} \otimes_K \tilde{A} \xrightarrow{\tau} A \otimes_K A \xrightarrow{\mu} A$
where $\tau : \tilde{A} \otimes_K \tilde{A} \rightarrow A \otimes_K A, \ a \otimes b \mapsto (-1)^{|a||b|}b \otimes a$,
unit $\tilde{\eta} : K \rightarrow \tilde{A}, \ x \mapsto \eta(x)$,
and differential $\tilde{d} : \tilde{A} \rightarrow \tilde{A},\ a \mapsto d(a)$.
The dg $K$-ring $(\tilde{A},\tilde{\mu},\tilde{\eta})$ is called the {\it opposite dg $K$-ring} of $A$, denoted by $A^\op$.
If $A$ is an augmented dg $K$-ring then the opposite dg $K$-ring $A^\op$ of $A$
automatically admits an augmentation $\tilde{\varepsilon} : \tilde{A} \rightarrow K, \ a \mapsto \varepsilon(a)$.
Obviously, a dg left (resp. right) $A$-module $M$ over an (augmented) dg $K$-ring is just
a dg right (resp. left) $A^\op$-module over its opposite (augmented) dg $K$-ring, and vice versa.

\bigskip

\noindent{\bf Tensor products of dg $K$-rings.} Let $A$ and $B$ be two dg $K$-rings.
Then the {\it tensor product} $A \otimes_k B$ of $A$ and $B$ is a dg $(K \otimes_k K)$-ring
whose product is $\mu_{A \otimes_k B} : (A \otimes_k B) \otimes_{K \otimes_k K} (A \otimes_k B) \xrightarrow{\cong}
(A \otimes_K A) \otimes_k (B \otimes_K B) \xrightarrow{\mu_A \otimes \mu_B} A \otimes_k B$,
whose unit is $\eta_{A \otimes_k B} : K \otimes_k K \xrightarrow{\eta_A \otimes \eta_B} A \otimes_k B$,
and whose differential is $d_{A \otimes_k B} = d_A \otimes \id + \id \otimes d_B$.
Thus a dg $A$-$B$-bimodule $M$ is nothing but a dg left $A \otimes_k B^\op$-module or a dg right $A^\op \otimes_k B$-module.
If $A$ and $B$ are augmented dg $K$-rings then $A \otimes_k B$ is an augmented dg $(K \otimes_k K)$-ring
with augmentation $\varepsilon_A \otimes \varepsilon_B$.
Let $A$ be a dg $K$-ring. Then $A^e := A^\op \otimes_k A$ is called the {\it enveloping dg $K^e$-ring} of $A$.

\bigskip

\noindent{\bf Dg $K$-corings.} A (counital, coassociative) {\it $K$-coring} $C = (C, \Delta, \varepsilon)$ is a $K$-bimodule $C$ equipped with
two $K$-bimodule morphisms $\Delta: C \rightarrow C \otimes C$, called
{\it coproduct}, and $\varepsilon : C \rightarrow K$, called {\it counit}, satisfying
$(\Delta \otimes \id) \circ \Delta = (\id \otimes \Delta) \circ \Delta$ and
$(\varepsilon \otimes \id) \circ \Delta = \id = (\id \otimes \varepsilon) \circ \Delta$,
i.e., the following two diagrams are commutative:

\medskip

coassociativity \hspace{20mm} $\xymatrix{ C \ar[r]^-{\Delta} \ar[d]_-{\Delta} & C \otimes C \ar[d]^-{\id \otimes \Delta} \\
C \otimes C \ar[r]^-{\Delta \otimes \id} & C \otimes C \otimes C }$

\medskip

counitality \hspace{22mm} $\xymatrix{ C \otimes K \ar@{=}[r] & C \ar[d]^-{\Delta} & K \otimes C. \ar@{=}[l] \\
& C \otimes C \ar[ur]_-{\varepsilon \otimes \id} \ar[ul]^-{\id \otimes \varepsilon} & }$

\medskip

\noindent A {\it morphism} from a $K$-coring $(C, \Delta, \varepsilon)$ to a $K$-coring $(C', \Delta', \varepsilon')$
is a $K$-bimodule morphism $f : C \rightarrow C'$ satisfying $\Delta' \circ f = (f \otimes f) \circ \Delta$
and $\varepsilon' \circ f = \varepsilon$.
A {\it graded $K$-coring} $C = (C, \Delta, \varepsilon)$ is
both a graded $K$-bimodule $C = \bigoplus\limits_{i \in \mathbb{Z}} C_i$ and a $K$-coring $(C, \Delta, \varepsilon)$
such that both $\Delta$ and $\varepsilon$ are morphism of graded $K$-bimodules.
A {\it morphism of graded $K$-corings} is both a morphism of graded $K$-bimodules and a morphism of $K$-corings.
A {\it differential graded (dg) $K$-coring} $C = (C, \Delta, \varepsilon, d)$
is both a dg $K$-bimodule $(C,d)$, whose differential $d : C \rightarrow C$ is of degree $-1$, and a graded $K$-coring $(C, \Delta, \varepsilon)$,
such that both $\Delta$ and $\varepsilon$ are morphisms of dg $K$-bimodules.
A {\it morphism of dg $K$-corings} is both a morphism of dg $K$-bimodules and a morphism of graded $K$-corings.
A {\it coaugmented dg $K$-coring} $C = (C, \Delta, \varepsilon, d, \eta)$ is a dg $K$-coring $(C, \Delta, \varepsilon, d)$
equipped with a dg $K$-coring morphism $\eta : K \rightarrow C$, called {\it coaugmentation}, satisfying $\varepsilon \circ \eta = \id_K$.
If $C$ is a coaugmented dg $K$-coring then $C = K1_C \oplus \overline{C}$ where $1_C = \eta(1_K)$ and
$\overline{C} = \Ker \varepsilon$, called the {\it coaugmentation coideal} of $C$. We always identify $K1_C$ with $K$.
A {\it morphism of coaugmented dg $K$-corings} is a morphism $f : C \rightarrow C'$ of dg $K$-corings
satisfying $f \circ \eta_C = \eta_{C'}$.

\bigskip

\noindent{\bf Dg comodules.} Let $(C, \Delta, \varepsilon)$ be a $K$-coring. A {\it left $C$-comodule} $N = (N, \Delta^l)$
is a left $K$-module $N$ equipped with a left $K$-module morphism $\Delta^l : N \rightarrow C \otimes N$,
called {\it left coaction}, satisfying $(\Delta \otimes \id_N) \circ \Delta^l = (\id_C \otimes \Delta^l) \circ \Delta^l$
and $(\varepsilon \otimes \id_N) \circ \Delta^l = \id_N$, i.e., the following two diagrams are commutative:

\medskip

coassociativity \hspace{10mm} $\xymatrix{ N \ar[r]^-{\Delta^l} \ar[d]_-{\Delta^l} & C \otimes N \ar[d]^{\id_C \otimes \Delta^l} \\
C \otimes N \ar[r]^-{\Delta \otimes \id_N} & C \otimes C \otimes N }$

\medskip

counitality \hspace{16mm} $\xymatrix{ N \ar@{=}[r] \ar[d]_-{\Delta^l} & K \otimes N \\C \otimes N \ar[ur]_-{\varepsilon \otimes \id_N} & }$

\medskip

\noindent Similarly, we can define a {\it right $C$-comodule} with a {\it right coaction} $\Delta^r : N \rightarrow N \otimes C$.
Let $C$ and $D$ be two $K$-corings.
A {\it $C$-$D$-bicomodule} $N = (N, \Delta^l, \Delta^r)$
is not only a $K$-bimodule $N$ but also a left $C$-comodule $(N, \Delta^l)$ and a right $C$-comodule $(N, \Delta^r)$, such that
$(\id_C \otimes \Delta^r) \circ \Delta^l = (\Delta^l \otimes \id_D) \circ \Delta^r.$
A {\it morphism} from a left $C$-comodule $(N, \Delta^l_N)$ to a left $C$-comodule $(N', \Delta^l_{N'})$
is a left $K$-module morphism $f: N \rightarrow N'$ satisfying
$\Delta^l_{N'} \circ f = (\id_C \otimes f) \circ \Delta^l_N.$
Similarly, we can define a {\it morphism of right comodules} and a {\it morphism of bicomodules}.
Let $C$ be a graded $K$-coring.
A {\it graded left $C$-comodule} $N = (N, \Delta^l)$
is both a graded left $K$-module $N = \bigoplus\limits_{i \in \mathbb{Z}} N_i$ and a left $C$-comodule $(N, \Delta^l)$
such that $\Delta^l$ is a morphism of graded left $K$-modules.
A {\it morphism} from a graded left $C$-comodule $N$ to a graded left $C$-comodule $N'$
is both a morphism of graded left $K$-modules and a morphism of left $C$-comodules.
Let $(C, \Delta, \varepsilon, d_C)$ be a dg $K$-coring.
A {\it dg left $C$-comodule} $N = (N, \Delta^l, d_N)$
is both a dg left $K$-module $(N,d_N)$ whose differential $d_N : N \rightarrow N$ is of degree $-1$, and a graded left $C$-comodule $N$,
such that $\Delta^l$ is a morphism of dg left $K$-modules.
A {\it morphism of dg left $C$-comodules} is both a morphism of dg left $K$-modules and a morphism of graded left $C$-comodules.
Similarly, we can define a {\it graded right comodule}, a {\it morphism of graded right comodules},
a {\it graded bicomodule}, a {\it morphism of graded bicomodules}, a {\it dg right comodule},
a {\it morphism of dg right comodules}, a {\it dg bicomodule}, and a {\it morphism of dg bicomodules}.

\bigskip

\noindent{\bf Graded $k$-duals.} Let $(-)^\vee$ be the {\it graded $k$-dual functor} on the category of dg $k$-modules,
i.e., $V^\vee := \bigoplus\limits_{i \in \mathbb{Z}}\Hom_k(V_i, k)$ and $d_{V^\vee} := -d_V^\vee$.
A dg $k$-module $V=(\bigoplus\limits_{i \in \mathbb{Z}}V_i,d)$ is said to be {\it locally finite} or {\it degreewise finite dimensional}
if every component $V_i$ is a finite dimensional $k$-vector space.
Let $V$ be a dg $K$-bimodule. Then there is a dg $K$-bimodule morphism
$\omega_{U,V} : V^\vee \otimes U^\vee \rightarrow (U \otimes V)^\vee$ given by
$\omega_{U,V}(f \otimes g)(u \otimes v) := g(u)f(v)$.
Note that $g(u)=0$ if $|g|+|u| \neq 0$, since $k$ is concentrated on degree 0.
When $U$ and $V$ are locally finite and bounded above (resp. below),
i.e., $U_i=0 =V_i$ for $i \gg 0$ (resp. $U_i=0 =V_i$ for $i \ll 0$), $\omega_{U,V}$ is an isomorphism.
If $(C, \Delta, \varepsilon, d, \eta)$ is a coaugmented dg $K$-coring then
$(C^\vee, \Delta^\vee \circ \omega_{C,C} , \varepsilon^\vee, -d_C^\vee, \eta^\vee)$ is an augmented dg $K$-ring. Here, we identify $K^\vee$ with $K$.
Conversely, if $(A, \mu, \eta, d, \varepsilon)$ is a locally finite bounded above (resp. below) augmented dg $K$-ring,
then $(A^\vee, \omega_{A,A}^{-1} \circ \mu^\vee, \eta^\vee, -d_A^\vee, \varepsilon^\vee)$ is a locally finite bounded below (resp. above) coaugemented dg $K$-coring.

\bigskip

\noindent{\bf (Co)complete dg $K$-(co)rings.}
Let $C$ be a coaugmented dg $K$-coring.
Define $\overline{\Delta} : \overline{C} \rightarrow \overline{C} \otimes \overline C$ by
$\overline{\Delta}(x) = \Delta(x) - 1 \otimes x - x \otimes 1$, and further
$\overline{\Delta}^i : \overline{C} \rightarrow \overline{C}^{\otimes i+1}$
by $\overline{\Delta}^0=\id , \ \overline{\Delta}^1=\overline{\Delta},$ and $\overline{\Delta}^i
= (\overline{\Delta} \otimes \id^{\otimes i-1}) \circ \overline{\Delta}^{i-1}$ for all $i \geq 2$.
Let $F_0C := K$ and $F_iC := K \oplus \Ker \overline{\Delta}^i$ for $i \geq 1$.
Then $F_iC$ is a $C$-bicomodule for all $i \geq 0$. The series
$F_0C \subset \cdots \subset F_iC \subset \cdots$ is called the {\it coradical series} of $C$.
A coaugmented dg $K$-coring $C$ is said to be {\it cocomplete} or {\it conilpotent} if $C = \bigcup\limits_{i=0}^\infty F_iC$.
An augmented dg $K$-ring $A$ is said to be {\it complete} if $\bigcap\limits_{i \geq 1}\overline{A}^i=0$.
Obviously, a finite dimensional augmented dg $K$-ring $A$ is complete if and only if its augmentation ideal $\overline{A}$ is nilpotent,
i.e., there is a positive integer $n$ such that $\overline{A}^n=0$.
A locally finite, bounded above or below, augmented dg $K$-ring $A$ is complete if and only if $A^\vee$ is cocomplete.

\bigskip

\noindent{\bf Tensor dg $K$-rings.} The {\it tensor dg $K$-ring} of a dg $K$-bimodule $V$ is the augmented dg $K$-ring
$$T(V) := \bigoplus\limits_{n=0}^\infty V^{\otimes n} = K \oplus V \oplus V^{\otimes 2} \oplus \cdots$$
whose product is $\mu : T(V) \otimes T(V) \rightarrow T(V), \
(v_1 \otimes \cdots \otimes v_n) \otimes (v_{n+1} \otimes \cdots \otimes v_{n+m}) \mapsto
v_1 \otimes \cdots \otimes v_n \otimes v_{n+1} \otimes \cdots \otimes v_{n+m}$,
whose unit is the embedding $\eta : K \hookrightarrow T(V)$,
whose differential $d$ is given by
$d|_{V^{\otimes n}} :=0$ if $n=0$ and $\sum\limits_{i=1}^n\id^{\otimes i-1}\otimes d_V\otimes \id^{\otimes n-i}$ if $n \geq 1$,
and whose augmentation is the projection $\varepsilon : T(V) \twoheadrightarrow K$,

For a dg $k$-module $M$ and $m \in \mathbb{Z}$, we denote by $Z_mM$ the subspace of $M$ consisting of all $m$-cycles.
Analogous to \cite[Proposition 1.1.1]{LodVal12}, we have the following result:

\begin{proposition} \label{Proposition-Tensor}
Let $V$ be a dg $K$-bimodule and $A$ an augmented dg $K$-ring. Then
$$\Hom_{\rm ADGR}(T(V),A) \cong Z_0\Hom_{K^e}(V,\overline{A}).$$
Here, {\rm ADGR} denotes the category of augmented dg $K$-rings.
\end{proposition}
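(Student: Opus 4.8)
The plan is to exhibit explicit, mutually inverse bijections between the two sides, essentially the classical universal property of the tensor algebra, while carefully tracking the differential and the Koszul signs. The only genuinely nontrivial point will be the compatibility with differentials, which is exactly where the cycle condition $Z_0$ enters.

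First I would define the restriction map. Given a morphism $\phi : T(V) \to A$ of augmented dg $K$-rings, I restrict it to the summand $V = V^{\otimes 1} \subseteq T(V)$. Since $\phi$ is compatible with augmentations and $\varepsilon_{T(V)}$ vanishes on $V^{\otimes 1}$, the image $\phi(V)$ lies in $\overline{A} = \Ker \varepsilon_A$; since $\phi$ is a morphism of dg $K$-bimodules of degree $0$, the restriction $\phi|_V$ is a $K^e$-linear chain map $V \to \overline{A}$, that is, an element of $Z_0\Hom_{K^e}(V,\overline{A})$. Here I use that the differential of the Hom-complex sends a degree-$0$ map $g$ to $d_A \circ g - g \circ d_V$, so its $0$-cycles are precisely the degree-$0$ maps commuting with the differentials. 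Conversely, given $f \in Z_0\Hom_{K^e}(V,\overline{A})$, I would define $\tilde f : T(V) \to A$ by $\tilde f|_K := \eta_A$ and
$$\tilde f(v_1 \otimes \cdots \otimes v_n) := \mu_A^{(n)}\big(f(v_1) \otimes \cdots \otimes f(v_n)\big)$$
for $n \geq 1$, where $\mu_A^{(n)}$ is the $n$-fold iterated product. That $\tilde f$ is $K^e$-linear and multiplicative follows from the $K$-bilinearity and associativity of $\mu_A$; that it preserves the unit is built into the definition; and that it respects augmentations follows because $\tilde f(V^{\otimes n}) \subseteq \overline{A}^{\,n} \subseteq \overline{A}$ for $n \geq 1$, so $\varepsilon_A \circ \tilde f$ agrees termwise with the projection $\varepsilon_{T(V)}$.

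The step I expect to be the crux is checking that $\tilde f$ commutes with the differentials. On one side, the differential of $T(V)$ restricted to $V^{\otimes n}$ is $\sum_{i=1}^n \id^{\otimes i-1} \otimes d_V \otimes \id^{\otimes n-i}$, so applying $\tilde f$ produces in the $i$-th term the Koszul sign $(-1)^{|v_1|+\cdots+|v_{i-1}|}$ coming from moving $d_V$ past $v_1,\dots,v_{i-1}$. On the other side, since $\mu_A$ is a morphism of dg $K$-bimodules, $d_A$ is a graded derivation of the product, so $d_A\,\tilde f(v_1 \otimes \cdots \otimes v_n)$ expands with the sign $(-1)^{|f(v_1)|+\cdots+|f(v_{i-1})|}$ in the $i$-th term. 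Because $f$ has degree $0$ one has $|f(v_j)| = |v_j|$, so the two families of signs coincide; because $f$ is a $0$-cycle one has $d_A \circ f = f \circ d_V$, so the two families of terms coincide as well. Hence $d_A \circ \tilde f = \tilde f \circ d_{T(V)}$, and $\tilde f$ is a genuine morphism in ${\rm ADGR}$.

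Finally I would check that the two assignments are mutually inverse. The identity $(\tilde f)|_V = f$ is immediate from the definition on $V^{\otimes 1}$. For the other direction, $\widetilde{\phi|_V} = \phi$ holds because any ${\rm ADGR}$-morphism out of $T(V)$ is determined on the generating subspace $V$ by multiplicativity and unitality, so both maps agree on each $V^{\otimes n}$ and on $K$. This yields the desired bijection; since the statement asserts only a set-level isomorphism, no further naturality verification is needed.
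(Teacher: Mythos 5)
Your proof is correct and follows essentially the same route as the paper's: restrict to $V$ in one direction, extend multiplicatively in the other, and use the $0$-cycle condition to get compatibility with differentials. The paper's version is terser (it omits the explicit Koszul-sign check you spell out), but the construction and the verification that the two maps are mutually inverse are identical.
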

\begin{proof}
For any $f\in Z_0\Hom_{K^e}(V,\overline{A})$, $f$ is homogeneous and commutes with differentials.
We extend $f$ to $\tilde{f}:T(V)\rightarrow A$ by $\tilde{f}|_K:=\id_K$ and
$\tilde{f}(v_1\otimes\cdots\otimes v_n):=f(v_1)\cdots f(v_n)$ for all $v_1\otimes\cdots\otimes v_n\in V^{\otimes n}$ and $n\geq 1$.
Then $\tilde{f}$ is a morphism of augmented dg $K$-rings.
On the other hand, for any $f\in \Hom_{\rm ADGR}(T(V),A)$,
the composition $V\hookrightarrow T(V)\xrightarrow{f} A\twoheadrightarrow \overline{A}$ belongs to $Z_0\Hom_{K^e}(V,\overline{A})$.
These two constructions are inverse to each other.
\end{proof}

\bigskip

\noindent{\bf Graded $K$-derivations.} Let $A$ be an augmented dg $K$-ring.
A graded $K$-bimodule morphism $d : A \rightarrow A$ of degree $m$ is called a {\it graded $K$-derivation} of $A$ of degree $m$ if

(1) $\varepsilon_A\circ d=0$;

(2) $d_{\Hom_{K^e}(A,A)}(d)=0$, i.e., $d_A\circ d-(-1)^md\circ d_A=0$;

(3) the graded Leibniz rule : $d \circ \mu=\mu \circ (d \otimes \id_A +\id_A \otimes d)$.

In particular, a differential $d:A\rightarrow A$ is a graded $K$-derivation of degree $-1$ such that $d^2=0$.
Denote by $\Der(A)_m$ the $k$-vector space of graded $K$-derivations of $A$ of degree $m$.

Analogous to \cite[2.4. Proposition]{HusMooSta74}, we have the following result:

\begin{proposition} \label{Proposition-Der}
Let $V$ be a dg $K$-bimodule. Then
$$\Der(T(V))_m\cong Z_m\Hom_{K^e}(V,\overline{T(V)}).$$
\end{proposition}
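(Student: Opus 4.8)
The plan is to exhibit two mutually inverse maps between the spaces, in the spirit of the classical statement that a derivation of a tensor algebra is freely determined by its restriction to the generators. First I would define a restriction map $\rho : \Der(T(V))_m \to Z_m\Hom_{K^e}(V,\overline{T(V)})$. Given $d \in \Der(T(V))_m$, condition (1) forces $\Im d \subseteq \Ker\varepsilon = \overline{T(V)}$, so $d|_V$ is a degree-$m$ morphism of $K$-bimodules $V \to \overline{T(V)}$. Since the differential of $T(V)$ restricts to $d_V$ on the summand $V = V^{\otimes 1}$, evaluating condition (2) on $V$ gives exactly $d_{\overline{T(V)}} \circ (d|_V) - (-1)^m (d|_V) \circ d_V = 0$, i.e.\ $d|_V$ is a cycle in the Hom-complex. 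Hence $\rho(d) := d|_V$ is well defined.

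Conversely I would define an extension map $\sigma$ by the usual derivation formula: for $f \in Z_m\Hom_{K^e}(V,\overline{T(V)})$ set $\tilde f|_K := 0$ and
\[
\tilde f(v_1 \otimes \cdots \otimes v_n) := \sum_{i=1}^n (-1)^{m(|v_1|+\cdots+|v_{i-1}|)}\, v_1 \otimes \cdots \otimes f(v_i) \otimes \cdots \otimes v_n,
\]
the signs arising from commuting the degree-$m$ map $f$ past $v_1,\dots,v_{i-1}$. As $f$ is $K$-bilinear this descends to the tensor products over $K$, and $\tilde f$ is a degree-$m$ $K$-bimodule morphism landing in $\overline{T(V)}$, so condition (1) holds; the graded Leibniz rule (3) is immediate from the summation formula once one matches the Koszul signs produced by splitting a tensor word into two subwords.

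It remains to verify condition (2) for $\tilde f$, and here I would argue structurally rather than term by term. Both $d_{T(V)}$ and $\tilde f$ are graded $K$-derivations of $T(V)$, of degrees $-1$ and $m$, and the graded commutator $[d_{T(V)},\tilde f] = d_{T(V)}\tilde f - (-1)^m \tilde f\, d_{T(V)}$ of two graded derivations is again a graded derivation, of degree $m-1$. A graded derivation that vanishes on $K$ and on the generating space $V$ must vanish on all of $T(V)$, since its values on each $V^{\otimes n}$ are dictated by the Leibniz rule. On $V$ this commutator equals $d_{\overline{T(V)}} \circ f - (-1)^m f \circ d_V = d_{\Hom}(f)$, which is zero because $f$ is a cycle; therefore $[d_{T(V)},\tilde f] = 0$ and condition (2) holds. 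Finally $\rho\circ\sigma = \id$ since $\tilde f|_V = f$, while $\sigma\circ\rho = \id$ because a graded $K$-derivation is determined by its restriction to $V$ together with $d|_K = 0$; the latter follows by applying the Leibniz rule to $1_A \otimes 1_A$, which yields $d(1_A) = 2\,d(1_A)$ and hence $d(1_A) = 0$.

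The step I expect to be the main obstacle is the sign bookkeeping: checking that the Koszul signs in the extension formula genuinely yield the graded Leibniz rule, and, should one prefer a direct verification of condition (2), controlling the interaction of $d_{T(V)} = \sum_i \id^{\otimes i-1}\otimes d_V \otimes \id^{\otimes n-i}$ with $\tilde f$. The structural argument via ``the graded commutator of graded derivations is a graded derivation'' is attractive precisely because it concentrates all the delicate sign analysis into that single standard lemma, after which everything else reduces to the observation that derivations are determined on generators.
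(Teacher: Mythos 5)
Your proposal is correct and follows essentially the same route as the paper: restrict a derivation to the generators $V$ (landing in $\overline{T(V)}$ by condition (1), and a cycle by condition (2)), and extend a cycle $f$ to the derivation $\sum_i \id^{\otimes i-1}\otimes f\otimes \id^{\otimes n-i}$, whose Koszul-sign expansion is exactly your explicit formula; the two constructions are mutually inverse because a graded $K$-derivation vanishes on $K$ and is determined on generators. Your commutator-of-derivations argument for condition (2) is a clean way to organize a verification the paper leaves implicit, but it is the same proof.
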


\begin{proof}
For any $f\in Z_m\Hom_{K^e}(V,\overline{T(V)})$, $f$ is a graded $K$-bimodule morphism of degree $m$ such that $d_{\overline{T(V)}}\circ f=(-1)^mf\circ d_V$.
We define $d_f:T(V)\rightarrow T(V)$ by $d_f|_{V^{\otimes n}}:=0$ if $n=0$, and $\sum\limits_{i=1}^n\id^{\otimes i-1}\otimes f\otimes\id^{\otimes n-i}$ if $n \geq 1$.
Then $d_f$ is a graded $K$-derivation of $T(V)$ of degree $m$.
On the other hand, for any $d\in \Der(T(V))_m$, the composition
$V\hookrightarrow T(V)\xrightarrow{d} T(V)\twoheadrightarrow \overline{T(V)}$ belongs to $Z_m\Hom_{K^e}(V,\overline{T(V)})$.
These two constructions are inverse to each other.
\end{proof}

\bigskip

\noindent{\bf Tensor dg $K$-corings.} The {\it tensor dg $K$-coring} of a dg $K$-bimodule $V$ is the coaugmented dg $K$-coring
$$T^c(V) := \bigoplus\limits_{n=0}^\infty V^{\otimes n} = K \oplus V \oplus V^{\otimes 2} \oplus \cdots$$
whose coproduct is $\Delta : T^c(V) \rightarrow T^c(V) \otimes T^c(V), \
v_1 \otimes \cdots \otimes v_n \mapsto \sum\limits_{i=0}^n (v_1 \otimes \cdots \otimes v_i) \otimes (v_{i+1} \otimes \cdots \otimes v_n)$,
for all $v_1 \otimes \cdots \otimes v_n\in V^{\otimes n}$ and $n \geq 1$, and $\Delta(1)=1 \otimes 1$,
whose counit is the projection $\varepsilon : T^c(V) \twoheadrightarrow K$,
whose differential $d$ is given by
$d|_{V^{\otimes n}} := 0$ if $n=0$ and $\sum\limits_{i=1}^n\id^{\otimes i-1} \otimes d_V \otimes \id^{\otimes n-i}$ if $n\geq 1$,
and whose coaugmentation is the embedding $\eta : K \hookrightarrow T^c(V)$,

Analogous to \cite[Proposition 1.2.1]{LodVal12}, we have the following result:

\begin{proposition} \label{Proposition-Tensor2}
Let $V$ be a dg $K$-bimodule and $C$ a cocomplete dg $K$-coring. Then
$$\Hom_{\rm CDGC}(C,T^c(V)) \cong Z_0\Hom_{K^e}(\overline{C},V).$$
Here, {\rm CDGC} denotes the category of coaugmented dg $K$-corings.
\end{proposition}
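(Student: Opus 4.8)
The plan is to dualize the proof of Proposition \ref{Proposition-Tensor}, establishing the universal (cofree) property of the tensor $K$-coring $T^c(V)$. I would exhibit two mutually inverse maps. In one direction, given a morphism $F : C \to T^c(V)$ of coaugmented dg $K$-corings, I compose with the canonical projection $\pi : T^c(V) \twoheadrightarrow V$ onto the tensor-degree-one component $V^{\otimes 1}$ and restrict to $\overline{C}$. Since $F$ respects counits, $F(\overline{C}) \subseteq \overline{T^c(V)} = \bigoplus_{n \geq 1} V^{\otimes n}$, so this makes sense. As $F$ is a degree-$0$ morphism of dg $K$-bimodules and $\pi$ is likewise a degree-$0$ morphism of dg $K$-bimodules (the differential of $T^c(V)$ preserves tensor-degree and restricts to $d_V$ on $V^{\otimes 1}$), the composite $\pi \circ F|_{\overline{C}}$ lies in $Z_0\Hom_{K^e}(\overline{C},V)$.

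In the other direction, given $f \in Z_0\Hom_{K^e}(\overline{C},V)$, I define $F : C \to T^c(V)$ by $F|_K := \id_K$ and, on $\overline{C}$, by the explicit series
\[
F|_{\overline{C}} := \sum_{n \geq 1} f^{\otimes n} \circ \overline{\Delta}^{\, n-1},
\]
where $\overline{\Delta}^{\, n-1} : \overline{C} \to \overline{C}^{\otimes n}$ is the iterated reduced coproduct and $f^{\otimes n} : \overline{C}^{\otimes n} \to V^{\otimes n}$ lands in tensor-degree $n$. The first point to settle is well-definedness: for each $c \in \overline{C}$, cocompleteness $C = \bigcup_{i \geq 0} F_iC$ forces $c \in F_N C$ for some $N$, whence $\overline{\Delta}^{\, n-1}(c) = 0$ for all $n > N$ and the sum is finite. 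This is the only place the cocompleteness hypothesis enters, and it is precisely what guarantees $F$ lands in $T^c(V)$ rather than in a completed tensor coring.

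The main obstacle is verifying that $F$ is a morphism of coaugmented dg $K$-corings. Compatibility with the $K$-bimodule structure, the counit $\varepsilon$, and the coaugmentation $\eta$ is immediate from the corresponding properties of $f$ and $\overline{\Delta}$ together with $F|_K = \id_K$. Compatibility with differentials, $d_{T^c(V)} \circ F = F \circ d_C$, reduces to the cycle condition $d_V \circ f = f \circ d_{\overline{C}}$ and the fact that $d_{T^c(V)}$ is the tensor-product differential, by propagating $d$ through each factor of $f^{\otimes n}$ and using that $\overline{\Delta}$ commutes with $d_C$. The genuinely combinatorial heart is comultiplicativity, $\Delta_{T^c(V)} \circ F = (F \otimes F) \circ \Delta_C$: here I expand the deconcatenation coproduct of $T^c(V)$ applied to $f^{\otimes n}(\overline{\Delta}^{\, n-1}(c))$ and match it term by term, in each tensor-degree, against $(F \otimes F)$ applied to $\Delta_C(c) = 1 \otimes c + c \otimes 1 + \overline{\Delta}(c)$. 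The boundary splittings $i = 0$ and $i = n$ produce the $1 \otimes c$ and $c \otimes 1$ contributions, while the interior splittings rest on the higher coassociativity identities $\overline{\Delta}^{\, p+q+1} = (\overline{\Delta}^{\, p} \otimes \overline{\Delta}^{\, q}) \circ \overline{\Delta}$ among the iterated reduced coproducts, which follow by induction from coassociativity of $\Delta$. This is exactly the dual of the associativity bookkeeping used for $T(V)$.

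Finally, I would check the two assignments are mutually inverse. Starting from $f$ and applying $\pi \circ (-)|_{\overline{C}}$ to the reconstructed $F$ recovers $f$, since only the $n=1$ summand $f^{\otimes 1} \circ \overline{\Delta}^{\, 0} = f$ survives projection to $V^{\otimes 1}$. Conversely, given a coring morphism $G : C \to T^c(V)$, set $f := \pi \circ G|_{\overline{C}}$ and reconstruct $F$ as above; one proves $F = G$ by induction along the coradical filtration, showing that the tensor-degree-$n$ component of $G$ on $F_N C$ is forced to equal $f^{\otimes n} \circ \overline{\Delta}^{\, n-1}$ by iterating the comultiplicativity of $G$ against the deconcatenation coproduct. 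Since both constructions are manifestly $k$-linear, this yields the desired isomorphism.
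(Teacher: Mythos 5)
Your proposal is correct and follows exactly the same route as the paper's proof: extend $f$ to $\tilde f|_{\overline{C}} = \sum_{n\geq 1} f^{\otimes n}\circ\overline{\Delta}^{\,n-1}$ (well defined by cocompleteness), and in the other direction compose a coring morphism with the projection $T^c(V)\twoheadrightarrow V$. The paper states the verifications (comultiplicativity, compatibility with differentials, mutual inverseness) without detail, whereas you supply them; the substance is identical.
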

\begin{proof}
For any $f\in Z_0\Hom_{K^e}(\overline{C},V)$, $f$ is homogeneous and commutes with differentials.
We extend $f$ to $\tilde{f}:C\rightarrow T^c(V)$ by $\tilde{f}|_K:=\id_K$ and
$\tilde{f}|_{\overline{C}}:= \sum\limits_{n=1}^\infty f^{\otimes n} \circ \overline{\Delta}^{n-1}$.
Since $C$ is cocomplete, $\tilde{f}$ is well-defined and it is a morphism of coaugmented dg $K$-corings.
On the other hand, for any $f\in \Hom_{\rm CDGC}(C,T^c(V))$,
the composition $\overline{C}\hookrightarrow C\xrightarrow{f} T^c(V)\twoheadrightarrow V$ belongs to $Z_0\Hom_{K^e}(\overline{C},V)$.
These two constructions are inverse to each other.
\end{proof}

\bigskip

\noindent{\bf Graded $K$-coderivation.} Let $C$ be a cocomplete dg $K$-coring.
A graded $K$-bimodule morphism $d : C \rightarrow C$ of degree $m$ is called a {\it graded $K$-coderivation} of $C$ if

(1) $d\circ\eta_C=0$;

(2) $d_{\Hom_{K^e}(C,C)}(d)=0$, i.e., $d_C\circ d-(-1)^md\circ d_C=0$;

(3) $\Delta \circ d = (d \otimes \id_C) \circ \Delta + (\id_C \otimes d) \circ \Delta$.

In particular, a differential $d:C\rightarrow C$ is a graded $K$-coderivation of degree $-1$ such that $d^2=0$.
Denote by $\Coder(C)_m$ the $k$-vector space of $K$-coderivations of $C$ of degree $m$.

Analogous to \cite[2.6. Proposition]{HusMooSta74}, we have the following result:

\begin{proposition} \label{Proposition-Coder}
Let $V$ be a dg $K$-bimodule. Then
$$\Coder(T^c(V))_m\cong Z_m\Hom_{K^e}(\overline{T^c(V)},V).$$
\end{proposition}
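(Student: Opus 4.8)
The plan is to establish the isomorphism $\Coder(T^c(V))_m \cong Z_m\Hom_{K^e}(\overline{T^c(V)},V)$ by constructing two mutually inverse maps, exactly dualizing the argument for tensor \emph{rings} in Proposition \ref{Proposition-Der}. First I would define the forward map: given a graded $K$-coderivation $d \in \Coder(T^c(V))_m$, send it to the composition $\overline{T^c(V)} \hookrightarrow T^c(V) \xrightarrow{d} T^c(V) \twoheadrightarrow V$, where the last arrow is the canonical projection onto the weight-one component. Condition (2) in the definition of coderivation guarantees this composition is an $m$-cycle in the Hom-complex, so it lands in $Z_m\Hom_{K^e}(\overline{T^c(V)},V)$. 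The backward map takes $f \in Z_m\Hom_{K^e}(\overline{T^c(V)},V)$ to the unique coderivation $d_f$ lifting it; concretely, on the component $V^{\otimes n}$ one sets
$$d_f|_{V^{\otimes n}} := \sum_{i=0}^{n-1}\sum_{j\ge 1}\bigl(\id^{\otimes i}\otimes f|_{V^{\otimes j}}\otimes \id^{\otimes n-i-j}\bigr),$$
summing over all ways of applying $f$ to a consecutive block and leaving the rest fixed, with Koszul signs inserted according to the degrees. (One may instead write $d_f := \sum_{n\ge 0}(\id\otimes f_n\otimes\id)\circ \overline{\Delta}$-type formula built from the reduced coproduct; the combinatorics is the coderivation analogue of the derivation formula in Proposition \ref{Proposition-Der}.)

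The two verifications to carry out are that $d_f$ genuinely satisfies the three defining conditions of a coderivation, and that the two constructions are mutually inverse. That $d_f$ vanishes on $K$ (condition (1)) and commutes with the differential up to the usual sign (condition (2), coming from $f$ being a cycle and $d_V$ being a derivation of $T^c(V)$) are direct checks. The heart of the matter is condition (3), the co-Leibniz rule $\Delta\circ d_f = (d_f\otimes\id)\circ\Delta + (\id\otimes d_f)\circ\Delta$: here I would compute both sides on a homogeneous tensor $v_1\otimes\cdots\otimes v_n$ using the explicit deconcatenation coproduct of $T^c(V)$ and match the terms according to whether the block to which $f$ is applied straddles, or lies entirely to one side of, the cut point $i$. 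This bookkeeping — tracking which summands of the left side correspond to which summands on the right, with the correct Koszul signs — is where the argument really lives.

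That said, the main obstacle is not a conceptual one but a combinatorial and sign-theoretic one, and crucially the \emph{cocompleteness} of $T^c(V)$ is what makes the backward construction well-defined. Because $T^c(V) = \bigoplus_n V^{\otimes n}$ is a direct sum, the infinite sum defining $d_f$ is actually finite on each homogeneous element, so there is no convergence issue; but one must still invoke Proposition \ref{Proposition-Tensor2} (or its underlying universal property) implicitly to know that a $K$-coderivation of a cocomplete $K$-coring is determined by, and freely built from, its projection to $V$. The surjectivity of the forward map and injectivity both reduce to the statement that a coderivation of $T^c(V)$ is uniquely determined by its weight-one component, which is the coderivation counterpart of the fact used in Proposition \ref{Proposition-Der}. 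I expect the cleanest write-up to mirror the proof of Proposition \ref{Proposition-Der} almost verbatim, merely reversing all arrows and replacing $\overline{T(V)}$ by $\overline{T^c(V)}$, so the essential new content is purely the verification of the co-Leibniz identity with signs.
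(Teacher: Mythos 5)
Your proposal is correct and follows essentially the same route as the paper: the forward map is the corestriction $\overline{T^c(V)}\hookrightarrow T^c(V)\xrightarrow{d}T^c(V)\twoheadrightarrow V$, and the backward map is the paper's $d_f|_{\overline{T^c(V)}}=\sum_{n\geq 1}\sum_{i=1}^{n}(\proj_V^{\otimes i-1}\otimes f\otimes\proj_V^{\otimes n-i})\circ\overline{\Delta}^{n-1}$, which unpacks to exactly your block-wise formula on $V^{\otimes n}$. The verification of the co-Leibniz rule and the observation that a coderivation of the (cocomplete) tensor coring is determined by its corestriction to $V$ are likewise the content of the paper's argument.
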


\begin{proof}
For any $f\in Z_m\Hom_{K^e}(\overline{T^c(V)},V)$, $f$ is a graded $K$-bimodule morphism of degree $m$ such that $d_V\circ f=(-1)^mf\circ d_{\overline{T^c(V)}}$.
We define $d_f:T^c(V)\rightarrow T^c(V)$ by $d_f|_K:=0$ and $d_f|_{\overline{T^c(V)}} :=
\sum\limits_{n=1}^\infty\sum\limits_{i=1}^n (\proj_V^{\otimes i-1}\otimes f\otimes\proj_V^{\otimes n-i})\circ \overline{\Delta}^{n-1}$
where $\proj_V : \overline{T^c(V)} \rightarrow V$ is the natural projection.
Then $d_f$ is a graded $K$-coderivation of $T^c(V)$ of degree $m$.
On the other hand, for any $d\in \Coder(T^c(V))_m$,
the composition $\overline{T^c(V)}\hookrightarrow T^c(V)\xrightarrow{d} T^c(V)\twoheadrightarrow V$ belongs to $Z_m\Hom_{K^e}(\overline{T^c(V)},V)$.
These two constructions are inverse to each other.
\end{proof}

\subsection{Twisting morphisms}

Assume that $(A, \mu, \eta_A, d_A, \varepsilon_A)$ is an augmented dg
$K$-ring and $(C, \Delta, \varepsilon_C, d_C, \eta_C)$ is a coaugmented dg $K$-coring.

\bigskip

\noindent{\bf Convolution dg $K$-rings.}
Denote by $\Hom_{K^e}(C, A)$ the $k$-vector space of all graded $K$-bimodule morphisms from $C$ to $A$ of arbitrary degree.
Then $\Hom_{K^e}(C, A)$ is an augmented dg $K$-ring, called the {\it convolution dg $K$-ring} from $C$ to $A$,
whose product is the {\it convolution product}, i.e., $f \star g := \mu \circ (f\otimes g) \circ \Delta$,
whose unit $\eta : K \rightarrow \Hom_{K^e}(C, A)$ is given by $\eta(x) := \eta_A \circ m_x \circ \varepsilon_C$
where $m_x \in \Hom_K(K,K)$ is the left or right multiplication by $x \in K$,
whose differential $d$ is given by $d(f) := d_A \circ f - (-1)^{|f|} f \circ d_C$,
and whose augmentation $\varepsilon : \Hom_{K^e}(C, A) \rightarrow K$ is given by
$\varepsilon(f) := (\varepsilon_A \circ f \circ \eta_C)(1_K)$.

\bigskip

\noindent{\bf Twisting morphisms.}
A {\it twisting morphism} from $C$ to $A$ is a morphism $\alpha \in \Hom_{K^e}(C, A)$ of degree $-1$
satisfying the {\it Maurer-Cartan equation} $d(\alpha) + \alpha \star \alpha=0$,
$\varepsilon_A \circ \alpha = 0$ and $\alpha \circ \eta_C = 0$.
Denote by $\Tw(C, A)$ the set of all twisting morphisms from $C$ to $A$.

For any $\alpha \in \Tw(C,A)$, the {\it twisted differential} $d_{\alpha}$ on
$\Hom_{K^e}(C,A)$ is given by $d_{\alpha}(f) := d(f) + [\alpha, f]$
where $[\alpha, f] := \alpha \star f - (-1)^{|f|}f \star \alpha$.
Then $\Hom_{K^e}^{\alpha}(C, A) := (\Hom_{K^e}(C, A), \star, \eta, d_{\alpha}, \varepsilon)$
is an augmented dg $K$-ring, called the {\it twisting convolution dg $K$-ring}.

For any $\alpha \in \Tw(C, A)$, the {\it twisted differential} $d_{\alpha}$ on $C \otimes A$ is given by
$d_{\alpha} := d_{C\otimes A} + d_{\alpha}^r = d_C \otimes \id_A + \id_C \otimes d_A + d_{\alpha}^r$
where $d_{\alpha}^r := (\id_C \otimes \mu) \circ (\id_C \otimes \alpha \otimes \id_A) \circ (\Delta \otimes \id_A)$.
Then $C \otimes_{\alpha} A := (C \otimes A, d_{\alpha})$ is a dg $K$-bimodule,
called the {\it right twisted tensor product}.
Indeed, $C \otimes_{\alpha} A$ is both a dg left $C$-comodule and a dg right $A$-module.
Similarly, we can define the {\it left twisted tensor product} $A \otimes_{\alpha} C$ with {\it twisted differential}
$d_{\alpha} := d_{A \otimes C} - d_{\alpha}^l$
where $d_{\alpha}^l := (\mu\otimes \id_C) \circ (\id_A \otimes \alpha \otimes \id_C) \circ (\id_A \otimes \Delta)$.

\begin{lemma} \label{Lemma-Hom-Tensor-Twisting}
Let $f: A \rightarrow A'$ be a morphism of augmented dg $K$-rings,
$g: C' \rightarrow C$ a morphism of coaugmented dg $K$-corings, $\alpha \in \Tw(C,A)$.
Then $f \circ \alpha \in \Tw(C,A')$ and $\alpha \circ g \in \Tw(C',A)$. Moreover,

{\rm (1)} the map $f_* : \Hom_{K^e}^{\alpha}(C, A) \rightarrow \Hom_{K^e}^{f \circ \alpha}(C, A'), \ \phi \mapsto f \circ \phi$,
is a morphism of augmented dg $K$-rings;

{\rm (2)} the map $g^* : \Hom_{K^e}^{\alpha}(C, A) \rightarrow \Hom_{K^e}^{\alpha \circ g}(C', A), \ \phi \mapsto \phi \circ g$,
is a morphism of augmented dg $K$-rings;

{\rm (3)} the map $\id \otimes f : C \otimes_{\alpha} A \rightarrow C \otimes_{f \circ \alpha} A'$ is a morphism of dg $K$-bimodules;

{\rm (4)} the map $g \otimes \id : C' \otimes_{\alpha \circ g} A \rightarrow C \otimes_{\alpha} A$ is a morphism of dg $K$-bimodules.
\end{lemma}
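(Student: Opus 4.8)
The plan is to reduce everything to unwinding the definitions of twisting morphism, twisting convolution dg $K$-ring, and (right/left) twisted tensor product, and then to exploit the single structural fact that $f$ (resp.\ $g$) is a degree $0$ morphism compatible with all the operations in sight. Concretely, being a morphism of augmented dg $K$-rings gives $d_{A'}\circ f = f\circ d_A$, $f\circ\mu_A = \mu_{A'}\circ(f\otimes f)$, $f\circ\eta_A = \eta_{A'}$ and $\varepsilon_{A'}\circ f = \varepsilon_A$; dually, being a morphism of coaugmented dg $K$-corings gives $d_C\circ g = g\circ d_{C'}$, $\Delta_C\circ g = (g\otimes g)\circ\Delta_{C'}$, $g\circ\eta_{C'} = \eta_C$ and $\varepsilon_C\circ g = \varepsilon_{C'}$. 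Since $f$ and $g$ have degree $0$, Koszul signs attached to them are trivial, so these identities propagate through tensor products without sign corrections.

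First I would check $f\circ\alpha\in\Tw(C,A')$ (the case $\alpha\circ g$ being dual). The two normalization conditions are immediate: $\varepsilon_{A'}\circ(f\circ\alpha) = \varepsilon_A\circ\alpha = 0$ and $(f\circ\alpha)\circ\eta_C = f\circ(\alpha\circ\eta_C) = 0$. For the Maurer-Cartan equation I would show that applying $f$ intertwines the two sides. Using $d_{A'}\circ f = f\circ d_A$ one gets $d(f\circ\alpha) = f\circ d(\alpha)$, and using $f\circ\mu_A = \mu_{A'}\circ(f\otimes f)$ together with the definition $\phi\star\psi = \mu\circ(\phi\otimes\psi)\circ\Delta$ one gets $(f\circ\alpha)\star(f\circ\alpha) = f\circ(\alpha\star\alpha)$. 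Hence $d(f\circ\alpha) + (f\circ\alpha)\star(f\circ\alpha) = f\circ\bigl(d(\alpha)+\alpha\star\alpha\bigr) = 0$.

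Next I would verify the four morphism statements. For (1), the identities $f\circ(\phi\star\psi) = (f\circ\phi)\star(f\circ\psi)$, $f\circ\eta(x) = \eta'(x)$ and $\varepsilon'(f\circ\phi) = \varepsilon(\phi)$ show $f_*$ preserves product, unit and augmentation; compatibility with the twisted differentials follows by combining $f\circ d(\phi) = d(f\circ\phi)$ with $f\circ[\alpha,\phi] = [f\circ\alpha, f\circ\phi]$, where the latter bracket identity comes from $f\circ(\alpha\star\phi) = (f\circ\alpha)\star(f\circ\phi)$ and $f\circ(\phi\star\alpha) = (f\circ\phi)\star(f\circ\alpha)$. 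Statement (2) is the mirror computation using $\Delta_C\circ g = (g\otimes g)\circ\Delta_{C'}$ to push $g$ through the convolution product, and $\varepsilon_C\circ g = \varepsilon_{C'}$, $g\circ\eta_{C'}=\eta_C$ for unit and augmentation. For (3) I would check that $\id\otimes f$, visibly a $K$-bimodule morphism, commutes with the twisted differentials: on the untwisted part $d_{C\otimes A}$ this is just $d_{A'}\circ f = f\circ d_A$, and on the twisted part one expands $d_\alpha^r = (\id_C\otimes\mu_A)\circ(\id_C\otimes\alpha\otimes\id_A)\circ(\Delta\otimes\id_A)$ and pushes $\id\otimes f$ to the right using $f\circ\mu_A=\mu_{A'}\circ(f\otimes f)$, landing on $d_{f\circ\alpha}^r\circ(\id\otimes f)$. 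Statement (4) is the analogous verification for $g\otimes\id$ against $d_\alpha^l$, using the coproduct compatibility of $g$.

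I expect the only real bookkeeping to lie in the twisted-tensor-product computations (3) and (4): one must track the Koszul signs when moving $\id\otimes f$ (resp.\ $g\otimes\id$) through the three-fold tensor factors of $d_\alpha^r$ (resp.\ $d_\alpha^l$). Because $f$ and $g$ carry degree $0$, every such sign is trivial, so the identities go through cleanly; the genuinely $K$-linear content, beyond the classical $t=1$ case, is simply that all tensor products are taken over $K$ and every structure map involved is a $K$-bimodule morphism, so each identity automatically respects the $K$-bimodule structure.
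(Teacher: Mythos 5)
Your proposal is correct and follows essentially the same route as the paper's proof: reduce everything to the defining identities of a (co)augmented dg $K$-(co)ring morphism and push $f$ (resp.\ $g$) through the convolution product and the twisted differentials, with all Koszul signs trivial because $f$ and $g$ have degree $0$. The only difference is that you spell out the compatibility of $f_*$ with the twisted differential and with unit and augmentation, which the paper leaves implicit after checking the convolution product.
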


\begin{proof} Since $\alpha$ is a twisting morphism, we have $d(\alpha)+\alpha \star \alpha=0$,
$\varepsilon_A \circ \alpha =0$ and $\alpha \circ \eta_C = 0$. Thus
\begin{eqnarray*}
&& d(f \circ \alpha) + (f \circ \alpha)\star(f \circ \alpha) \\
&=& d_{A'} \circ f \circ \alpha - (-1)^{-1} f \circ \alpha \circ d_C + \mu_{A'}\circ((f\circ\alpha)\otimes(f\circ\alpha))\circ\Delta_C \\
&=& f \circ d_A \circ \alpha + f \circ \alpha \circ d_C + \mu_{A'}\circ(f \otimes f)\circ(\alpha\otimes\alpha)\circ\Delta_C \\
&=& f \circ d(\alpha) + f\circ\mu_A\circ(\alpha\otimes\alpha)\circ\Delta_C \\
&=& f \circ d(\alpha) + f \circ (\alpha\star\alpha) \\
&=& f \circ (d(\alpha) + \alpha\star\alpha) \\
&=& 0,
\end{eqnarray*}
$\varepsilon_{A'} \circ (f \circ \alpha) = (\varepsilon_{A'} \circ f) \circ \alpha = \varepsilon_A \circ \alpha =0$
and $(f \circ \alpha) \circ \eta_C = f \circ (\alpha \circ \eta_C) = 0$.
Hence $f \circ \alpha \in \Tw(C,A')$. Similarly, $\alpha \circ g \in \Tw(C',A)$.

(1) It is sufficient to show $f \circ (\phi \star \psi) = (f \circ \phi) \star (f \circ \psi)$ for all $\phi, \psi \in \Hom_{K^e}(C, A)$. Indeed,
\begin{eqnarray*}
f \circ (\phi\star\psi)
&=& f\circ\mu_A\circ(\phi\otimes\psi)\circ\Delta_C\\
&=&\mu_{A'}\circ(f \otimes f)\circ(\phi\otimes\psi)\circ\Delta_C\\
&=&\mu_{A'}\circ((f\circ\phi)\otimes(f\circ\psi))\circ\Delta_C\\
&=&(f\circ\phi)\star(f\circ\psi).
\end{eqnarray*}

(2) Similar to (1).

(3) It is enough to show $d^{r}_{f \circ \alpha}\circ(\id_C\otimes f)=(\id_C\otimes f)\circ d^{r}_{\alpha}$. Indeed,
\begin{eqnarray*}
&&d^{r}_{f \circ \alpha}\circ(\id_C\otimes f)\\
&=&(\id_C\otimes\mu_A)\circ(\id_C\otimes (f \circ \alpha) \otimes\id_A)\circ(\Delta_C\otimes\id_A)\circ(\id_C\otimes f)\\
&=&(\id_C\otimes\mu_A)\circ(\id_C\otimes (f \circ \alpha) \otimes\id_A)\circ(\id_C\otimes\id_C\otimes f)\circ(\Delta_C\otimes\id_A)\\
&=&(\id_C\otimes\mu_A)\circ(\id_C\otimes (f \circ \alpha) \otimes f)\circ(\Delta_C\otimes\id_A)\\
&=&(\id_C\otimes\mu_A)\circ(\id_C\otimes f \otimes f)\circ(\id_C\otimes\alpha\otimes\id_A)\circ(\Delta_C\otimes\id_A)\\
&=&(\id_C\otimes f)\circ(\id_C\otimes\mu_A)\circ(\id_C\otimes\alpha\otimes\id_A)\circ(\Delta_C\otimes\id_A)\\
&=&(\id_C\otimes f)\circ d^{r}_{\alpha}.
\end{eqnarray*}

(4) Similar to (3).
\end{proof}

More generally, let $M$ be a dg left $A$-module, $N$ a dg right $C$-comodule, and $\alpha \in \Tw(C,A)$.
A {\it twisted differential} on $N \otimes M$ is given by
$d_{\alpha} := d_{N \otimes M} + d_{\alpha}^r = d_N \otimes \id_M + \id_N \otimes d_M + d_{\alpha}^r,$
where $d_{\alpha}^r := (\id_N \otimes \mu_M^l) \circ (\id_N \otimes \alpha \otimes \id_M) \circ (\Delta_N^r \otimes \id_M)$.
Then $N \otimes_{\alpha} M := (N \otimes M, d_{\alpha})$ is a dg $k$-module,
called the {\it right twisted tensor product}.
Similarly, let $M$ be a dg right $A$-module, $N$ a dg left $C$-comodule, and $\alpha \in \Tw(C,A)$.
Then the {\it left twisted tensor product} $M \otimes_{\alpha} N := (M \otimes N, d_{\alpha})$ where the {\it twisted differential}
$d_{\alpha} := d_{M \otimes N} - d_{\alpha}^l$
and $d_{\alpha}^l := (\mu_M^r \otimes \id_N) \circ (\id_M \otimes \alpha \otimes \id_N) \circ (\id_M \otimes \Delta_N^l)$.
Let $M$ be a dg right $A$-module, $N$ a dg $C$-bicomodule, $L$ a dg left $A$-module, and $\alpha \in \Tw(C,A)$.
Then the {\it two-sided twisted tensor product} $M \otimes_\alpha N \otimes_\alpha L := (M \otimes N \otimes L, d_{\alpha})$
where the {\it twisted differential}
$d_{\alpha} := d_{M \otimes N \otimes L} + \id_M \otimes d_{\alpha}^r - d_{\alpha}^l \otimes \id_L$,
$d_{\alpha}^r := (\id_N \otimes \mu_L^l) \circ (\id_N \otimes \alpha \otimes \id_L) \circ (\Delta_N^r \otimes \id_L)$
and $d_{\alpha}^l := (\mu_M^r \otimes \id_N) \circ (\id_M \otimes \alpha \otimes \id_N) \circ (\id_M \otimes \Delta_N^l)$.
Similarly, let $N$ be a dg right $C$-comodule, $M$ a dg $A$-bimodule, $L$ a dg left $C$-comodule, and $\alpha \in \Tw(C,A)$.
Then the {\it two-sided twisted tensor product} $N \otimes_\alpha M \otimes_\alpha L := (N \otimes M \otimes L, d_{\alpha})$
where the {\it twisted differential}
$d_{\alpha} := d_{N \otimes M \otimes L} + d_{\alpha}^r \otimes \id_L -\id_N \otimes d_{\alpha}^l$,
$d_{\alpha}^r := (\id_N \otimes \mu_M^l) \circ (\id_N \otimes \alpha \otimes \id_M) \circ (\Delta_N^r \otimes \id_M)$
and $d_{\alpha}^l := (\mu_M^r \otimes \id_L) \circ (\id_M \otimes \alpha \otimes \id_L) \circ (\id_M \otimes \Delta_L^l)$.
Let $M$ be an $A$-bimodule, $N$ a $C$-bicomodule, and $\alpha \in \Tw(C,A)$.
Then the tensor product $M \otimes_{A^e}(A\otimes_\alpha N\otimes_\alpha A) \cong M\otimes_{K^e}N$,
and the induced differential of $M\otimes_{K^e}N$ is $d_\alpha:=d_{M\otimes_{K^e}N}+\tau_{M,N}^{-1} \circ d_\alpha^r \circ \tau_{M,N}-d_\alpha^l$.
Denote the dg $k$-module $(M\otimes_{K^e}N,d_\alpha)$ by $M\otimes^{\alpha}_{K^e}N$.
Similarly, we could define $N\otimes_{K^e}^\alpha M$.
Moreover, the dg $k$-module $\Hom^\alpha_{K^e}(N,M)$ is the graded $k$-vector space $\Hom_{K^e}(N,M)$
endowed with the differential induced from that of $\Hom_{A^e}(A\otimes_{\alpha}N\otimes_{\alpha}A,M)$
under the graded $k$-vector space isomorphism $\Hom_{A^e}(A\otimes_\alpha N\otimes_\alpha A,M)\cong\Hom_{K^e}(N,M)$.

\begin{lemma} \label{Lemma-Hom-Tensor-k-dual}
Let $A$ be a locally finite, bounded above (resp. below), augmented dg $K$-ring, $C$ a locally finite coaugmented dg
$K$-coring, and $\alpha \in \Tw(C,A)$. Then $-\alpha^\vee \in \Tw(A^\vee,C^\vee)$. Moreover,

{\rm (1)} the map $\phi : \Hom_{K^e}^{\alpha}(C,A) \rightarrow \Hom_{K^e}^{-\alpha^\vee}(A^\vee,C^\vee), \ f \mapsto f^\vee,$
is an anti-isomorphism of dg $K$-rings;

{\rm (2)} if $C$ is bounded above (resp. below) then the map $\omega_{C,A} : A^\vee \otimes_{-\alpha^\vee} C^\vee \rightarrow (C \otimes_{\alpha} A)^\vee$
and the map $\omega_{A,C} : C^\vee \otimes_{-\alpha^\vee} A^\vee \rightarrow (A \otimes_{\alpha} C)^\vee$
are isomorphisms of dg $K$-bimodules.

{\rm (3)} if $C$ is bounded above (resp. below) then
the map $\omega_{C,A}: A^\vee\otimes_{K^e}^{-\alpha^\vee}C^\vee \rightarrow (C \otimes_{K^e}^\alpha A)^\vee$
and the map $\omega_{A,C} : C^\vee\otimes_{K^e}^{-\alpha^\vee} A^\vee \rightarrow (A \otimes_{K^e}^\alpha C)^\vee$
are isomorphisms of dg $k$-modules.
\end{lemma}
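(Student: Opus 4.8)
The plan is to take part (1) as the backbone: once the dualization map $\phi : f \mapsto f^\vee$ is shown to be an anti-isomorphism of the underlying convolution dg $K$-rings, the assertion $-\alpha^\vee \in \Tw(A^\vee, C^\vee)$ and the twisted refinement in (1) follow formally, while parts (2) and (3) reduce to a compatibility check for $\omega$ against the twisting terms. So I would first prove the \emph{untwisted} statement: $\phi : \Hom_{K^e}(C,A) \to \Hom_{K^e}(A^\vee, C^\vee)$ is a graded $k$-linear anti-isomorphism of convolution dg $K$-rings. Bijectivity is the double-dual isomorphism, valid because $A$ and $C$ are locally finite (and $A$ is bounded above (resp. below), so that $A^\vee$ is genuinely a coaugmented dg $K$-coring with coproduct $\Delta_{A^\vee} = \omega_{A,A}^{-1} \circ \mu_A^\vee$). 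The heart is anti-multiplicativity. Writing $f \star g = \mu_A \circ (f \otimes g) \circ \Delta_C$ on one side and $g^\vee \star f^\vee = (\Delta_C^\vee \circ \omega_{C,C}) \circ (g^\vee \otimes f^\vee) \circ (\omega_{A,A}^{-1} \circ \mu_A^\vee)$ on the other, everything collapses to the single naturality identity $\omega_{C,C} \circ (g^\vee \otimes f^\vee) = (-1)^{|f||g|} (f \otimes g)^\vee \circ \omega_{A,A}$, which I would verify by evaluating both sides on $\psi_1 \otimes \psi_2 \in A^\vee \otimes A^\vee$ and $c_1 \otimes c_2 \in C \otimes C$ and comparing Koszul signs. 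This yields $\phi(f \star g) = (-1)^{|f||g|} \phi(g) \star \phi(f)$; that $\phi$ carries unit to unit, respects augmentation, and intertwines the untwisted differentials $d(f) = d_A \circ f - (-1)^{|f|} f \circ d_C$ are short checks from $d_{A^\vee} = -d_A^\vee$, $d_{C^\vee} = -d_C^\vee$ and the contravariance of $(-)^\vee$.

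Second, I would deduce $-\alpha^\vee \in \Tw(A^\vee, C^\vee)$ and promote (1) to its twisted form. Granting the differential compatibility from Step 1, applying the anti-isomorphism $\phi$ to the Maurer--Cartan equation $d(\alpha) + \alpha \star \alpha = 0$ and using $|\alpha| = -1$ (so $(-1)^{|\alpha||\alpha|} = -1$) gives $d(\alpha^\vee) - \alpha^\vee \star \alpha^\vee = 0$, hence $d(-\alpha^\vee) + (-\alpha^\vee) \star (-\alpha^\vee) = 0$; the side conditions $\varepsilon_{C^\vee} \circ (-\alpha^\vee) = 0$ and $(-\alpha^\vee) \circ \eta_{A^\vee} = 0$ come from $\varepsilon_{C^\vee} = \eta_C^\vee$, $\eta_{A^\vee} = \varepsilon_A^\vee$ together with $\alpha \circ \eta_C = 0$ and $\varepsilon_A \circ \alpha = 0$. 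With $-\alpha^\vee$ now a twisting morphism, $\Hom_{K^e}^{-\alpha^\vee}(A^\vee, C^\vee)$ is defined, and it remains to check $\phi$ intertwines the twisted differentials: writing $\phi(d_\alpha(f)) = d(f^\vee) + \phi([\alpha,f])$ and expanding $[\alpha,f] = \alpha \star f - (-1)^{|f|} f \star \alpha$ through anti-multiplicativity (again with $|\alpha| = -1$) collapses exactly to $[-\alpha^\vee, f^\vee]$, so $\phi(d_\alpha(f)) = d_{-\alpha^\vee}(\phi(f))$. This finishes (1).

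Third, for (2) and (3) I would lean on the fact, already recorded in the ``Graded $k$-duals'' paragraph, that $\omega_{C,A} : A^\vee \otimes C^\vee \to (C \otimes A)^\vee$ is an isomorphism of dg $K$-bimodules \emph{for the tensor (untwisted) differentials} precisely when $A$ and $C$ are locally finite and bounded above (resp. below), which is exactly why the hypothesis that $C$ is bounded enters here but not in (1). It then suffices to show $\omega_{C,A}$ carries the twisting summand to its sign-corrected dual, i.e. $\omega_{C,A} \circ d^r_{-\alpha^\vee} = -(d^r_\alpha)^\vee \circ \omega_{C,A}$. Unravelling $d^r_{-\alpha^\vee} = (\id \otimes \mu_{C^\vee}) \circ (\id \otimes (-\alpha^\vee) \otimes \id) \circ (\Delta_{A^\vee} \otimes \id)$ with $\mu_{C^\vee} = \Delta_C^\vee \circ \omega_{C,C}$ and $\Delta_{A^\vee} = \omega_{A,A}^{-1} \circ \mu_A^\vee$ reduces this to the same functoriality of $\omega$ for triple tensor products used in (1); the symmetric identity handles $\omega_{A,C}$ on $A \otimes_\alpha C$. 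Finally (3) follows from (2): the objects $C \otimes^\alpha_{K^e} A$ and $A^\vee \otimes^{-\alpha^\vee}_{K^e} C^\vee$ are the $K^e$-reductions whose differentials are built from $d^r_\alpha$ and $d^l_\alpha$ conjugated by the switching maps $\tau$, and since $\omega$ commutes with $\tau$ up to the Koszul sign, the dg isomorphism of (2) descends to them; alternatively one invokes $M \otimes_{A^e}(A \otimes_\alpha N \otimes_\alpha A) \cong M \otimes_{K^e} N$ from the text and dualizes.

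The main obstacle throughout is sign bookkeeping, concentrated in two places. First, in (1) the map $\omega$ reverses the two tensor factors, so the anti-multiplicativity sign $(-1)^{|f||g|}$ and the degree $-1$ of $\alpha$ must be threaded consistently through every application of the Koszul rule; pinning down the sign in $\omega_{C,C} \circ (g^\vee \otimes f^\vee) = (-1)^{|f||g|} (f \otimes g)^\vee \circ \omega_{A,A}$ exactly is what makes the Maurer--Cartan transfer land on $-\alpha^\vee$ rather than $+\alpha^\vee$. Second, in (3) the conjugation by $\tau$ built into $\otimes^\alpha_{K^e}$, on top of the order-reversal already present in $\omega$, is the delicate point; I expect the cleanest route is to verify that $\omega$ intertwines the two one-sided twist terms $d^r$ and $d^l$ separately (as in (2)) and that $\tau$-conjugation is preserved, rather than attacking the two-sided differential in one stroke.
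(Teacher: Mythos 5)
Your proposal is correct and follows essentially the same route as the paper: anti-multiplicativity of $\phi$ via the naturality of $\omega$ with its Koszul sign, differential compatibility from $d_{V^\vee}=-d_V^\vee$, and for (2)--(3) the reduction to checking that $\omega$ intertwines the twisting summands (and the $\tau$-conjugated ones) after invoking local finiteness and boundedness for bijectivity. The only cosmetic difference is that you derive $-\alpha^\vee\in\Tw(A^\vee,C^\vee)$ formally by applying the untwisted anti-isomorphism to the Maurer--Cartan equation, whereas the paper verifies it by the same direct dualization computation up front; the underlying sign bookkeeping is identical.
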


\begin{proof}
Since $\alpha\in{\rm Tw}(C,A)$, we have $d_A\circ\alpha+\alpha\circ d_C+\alpha\star\alpha=0$, $\varepsilon_A \circ \alpha=0$ and $\alpha \circ \eta_C=0$.
It follows that
$d(-\alpha^\vee)+(-\alpha^\vee)\star(-\alpha^\vee)
= (-\alpha^\vee)\circ d_{A^\vee}+d_{C^\vee}\circ(-\alpha^\vee)+(-\alpha^\vee)\star(-\alpha^\vee)
= \alpha^\vee\circ d_A^\vee+d_C^\vee\circ\alpha^\vee+\alpha^\vee\star\alpha^\vee
= -(d_A\circ\alpha+\alpha\circ d_C+\alpha\star\alpha)^\vee = 0,$
$\varepsilon_{C^\vee} \circ (-\alpha^\vee) = \eta_C^\vee \circ (-\alpha^\vee) = - (\alpha \circ \eta_C)^\vee =0$
and $(-\alpha^\vee) \circ \eta_{A^\vee} = (-\alpha^\vee) \circ \varepsilon_A^\vee = -(\varepsilon_A \circ \alpha)^\vee =0$.
Therefore, $-\alpha^\vee\in{\rm Tw}(A^\vee,C^\vee)$.

(1) Firstly,
$$\phi(f \star g) = (f \star g)^\vee = (-1)^{|f||g|}g^\vee \star f^\vee = (-1)^{|f||g|} \phi(g)\star\phi(f)$$
for all $f,\ g \in \Hom_{K^e}(C,A)$. Secondly, $\phi \circ d = d' \circ \phi$, i.e., $d(f)^\vee=d'(f^\vee)$ for all $f \in \Hom_{K^e}(C,A)$,
where $d := d_{\Hom_{K^e}^{\alpha}(C,A)}$ and $d':=d_{\Hom_{K^e}^{-\alpha^\vee}(A^\vee,C^\vee)}$. Indeed,
\begin{eqnarray*}
d(f)^\vee &=& (d_A \circ f-(-1)^{|f|} f\circ d_C + \alpha\star f-(-1)^{|f|}f\star\alpha)^\vee\\
&=& (-1)^{|f|}f^\vee\circ d_A^\vee-d_C^\vee\circ f^\vee+(-1)^{|f|}f^\vee\star\alpha^\vee-\alpha^\vee\star f^\vee\\
&=& d_{C^\vee}\circ f^\vee-(-1)^{|f|}f^\vee\circ d_{A^\vee}+(-\alpha^\vee)\star f^\vee-(-1)^{|f|}f^\vee\star(- \alpha^\vee)\\
&=& d'(f^\vee).
\end{eqnarray*}
Finally, since $A$ and $C$ are locally finite, $\phi$ is an isomorphism of dg $K$-bimodules.
Therefore, $\phi$ is an anti-isomorphism of dg $K$-rings.

(2) Since both $A$ and $C$ are locally finite and bounded above, $\omega_{C,A}$ is an isomorphism of graded $K$-bimodules.
Thus it is enough to prove $\omega_{C,A}$ commutes with differentials, i.e., $d \circ \omega_{C,A} = \omega_{C,A} \circ d'$,
where $d = d_{(C\otimes_{\alpha} A)^\vee} = -d_{C\otimes_{\alpha} A}^\vee$ and $d' = d_{A^\vee \otimes_{-\alpha^\vee} C^\vee}$. Indeed,
\begin{eqnarray*}
d \circ \omega_{C,A} &=& -(d_C \otimes \id_A + \id_C \otimes d_A + d_{\alpha}^r)^\vee \circ \omega_{C,A} \\
&=& -\omega_{C,A} \circ (\id_A^\vee \otimes d_C^\vee + d_A^\vee \otimes \id_C^\vee - d_{-\alpha^\vee}^r)\\
&=& \omega_{C,A} \circ (\id_{A^\vee} \otimes d_{C^\vee} + {d_A^\vee} \otimes \id_{C^\vee} + d_{-\alpha^\vee}^r)\\
&=& \omega_{C,A} \circ d'.
\end{eqnarray*}

(3) Since both $A$ and $C$ are locally finite and bounded above, $\omega_{C,A}$ is an isomorphism of graded $K$-bimodules.
Thus it suffices to show $\omega_{C,A}$ commutes with differentials. Indeed,
\begin{align*}
& \ d_{(C\otimes_{K^e}^\alpha A)^\vee}\circ \omega_{C,A} \\
=& \ -(d_C\otimes\id_A + \id_C\otimes d_A + d_{\alpha}^r - \tau^{-1}_{C,A}\circ d_\alpha^l\circ\tau_{C,A})^\vee \circ \omega_{C,A}\\
=& \ -((d_C\otimes {\rm id}_A)^\vee+({\rm id}_C\otimes d_A)^\vee+(d_{\alpha}^r)^\vee-\tau^\vee_{C,A}\circ (d_{\alpha}^l)^\vee\circ (\tau_{C,A}^{-1})^\vee)\circ\omega_{C,A}\\
=& \ -\omega_{C,A}\circ(\id_A^\vee\otimes d_C^\vee+d_A^\vee\otimes \id_C^\vee-d_{-\alpha^\vee}^r+\tau^{-1}_{A^\vee,C^\vee}\circ d_{-\alpha^\vee}^l\circ \tau_{A^\vee,C^\vee})\\
=& \ \omega_{C,A}\circ(\id_{A^\vee}\otimes d_{C^\vee}+d_{A^\vee}\otimes \id_{C^\vee}+
d_{-\alpha^\vee}^r-\tau^{-1}_{A^\vee,C^\vee}\circ d_{-\alpha^\vee}^l\circ \tau_{A^\vee,C^\vee})\\
=& \ \omega_{C,A}\circ d_{A^\vee\otimes_{K^e}^{-\alpha^\vee} C^\vee}.
\qedhere
\end{align*}
\end{proof}

\subsection{(Co)bar constructions}

\noindent{\bf Bar construction.} Let $A = K \oplus \overline{A}$ be an augmented dg $K$-ring.
The dg $K$-bimodule $s\overline{A}$, where $s$ is the suspension functor which is also denoted by $[1]$ sometimes,
determines the cocomplete coaugmented dg $K$-coring
$$T^c(s\overline{A}) = \bigoplus\limits_{n=0}^\infty (s\overline{A})^{\otimes n}
= K \oplus s\overline{A} \oplus (s\overline{A})^{\otimes 2} \oplus \cdots$$
whose coproduct $\Delta$, counit $\varepsilon$, differential $d_0$ and coaugmentation $\eta$ have been defined before.
Write $[a_1|a_2|\cdots|a_n]$ for the homogeneous element $sa_1 \otimes sa_2 \otimes \cdots \otimes sa_n \in (s\overline{A})^{\otimes n} \subset T^c(s\overline{A})$
where $s$ in $sa_i$ is the suspension map on $\overline{A}$.
By definition, the differential $d_0$ is given by
$$d_0([a_1|\cdots|a_n]) = \sum\limits_{i=1}^{n} (-1)^{|a_1|+\cdots+|a_{i-1}|+i}[a_1|\cdots|d_A(a_i)|\cdots|a_n].$$
Consider the map $f:\overline{T^c(s\overline{A})}\twoheadrightarrow s\overline{A} \otimes s\overline{A} \xrightarrow{f'} s\overline{A}$,
where the map $f':=s\circ\mu\circ(s^{-1}\otimes s^{-1}) : s\overline{A} \otimes s\overline{A} \rightarrow s\overline{A}, \ [a_1|a_2] \mapsto (-1)^{|a_1|}[a_1a_2]$.
The graded $K$-bimodule morphism $f$ is of degree $-1$ and $d_{s\overline{A}}\circ f+f\circ d_{\overline{T^c(s\overline{A})}}=0$,
i.e., $f\in Z_{-1}\Hom_{K^e}(\overline{T^c(s\overline{A})},s\overline{A})$.
By Proposition \ref{Proposition-Coder}, $f$
induces a graded $K$-coderivation $d_1$ of $T^c(s\overline{A})$ of degree $-1$ given by
$$d_1([a_1|\cdots|a_n]) = \sum\limits_{i=1}^{n-1} (-1)^{|a_1|+\cdots+|a_{i}|+i-1}[a_1|\cdots|a_ia_{i+1}|\cdots|a_n].$$
Thus $d_0\circ d_1+d_1\circ d_0=0$.
Since $d_0^2=0=d_1^2$, $d:=d_0+d_1$ is a differential of $T^c(s\overline{A})$.
The cocomplete coaugmented dg $K$-coring $BA:=(T^c(s\overline{A}),\Delta,\varepsilon,d,\eta)$ is called the {\it bar construction} of $A$.

\bigskip

\noindent{\bf Cobar construction.} Let $C = K \oplus \overline{C}$ be a cocomplete coaugmented dg $K$-coring.
The dg $K$-bimodule $s^{-1}\overline{C}$ determines the augmented dg $K$-ring
$$T(s^{-1}\overline{C}) = \bigoplus\limits_{n=0}^\infty (s^{-1}\overline{C})^{\otimes n}
= K \oplus s^{-1}\overline{C} \oplus (s^{-1}\overline{C})^{\otimes 2} \oplus \cdots $$
whose product $\mu$, unit $\eta$, differential $d_0$ and augmentation $\varepsilon$ have been defined before.
Write $\langle c_1|c_2|\cdots|c_n\rangle$ for the homogeneous element
$s^{-1}c_1 \otimes s^{-1}c_2 \otimes \cdots \otimes s^{-1}c_n \in (s^{-1}\overline{C})^{\otimes n} \subset T(s^{-1}\overline{C}).$
By definition, the differential $d_0$ is given by
$$d_0(\langle c_1|\cdots|c_n \rangle) := \sum\limits_{i=1}^{n} (-1)^{|c_1|+\cdots+|c_{i-1}|+i} \langle c_1|\cdots|d_C(c_i)|\cdots|c_n \rangle.$$
Consider the map $f: s^{-1} \overline{C} \stackrel{f'}{\rightarrow} s^{-1}\overline{C} \otimes s^{-1}\overline{C} \hookrightarrow \overline{T(s^{-1}\overline{C})}$,
where the map $f':=(s^{-1}\otimes s^{-1})\circ\overline{\Delta}\circ s : s^{-1}\overline{C} \rightarrow s^{-1}\overline{C} \otimes s^{-1}\overline{C}, \
\langle c \rangle \mapsto (-1)^{|c_1|+1} \langle c_1|c_2 \rangle$, here $\overline{\Delta}(c) = c_1 \otimes c_2$.
Note that we always omit $\sum$ and brackets in Sweedler's notation $\Delta(c) = \sum c_{(1)} \otimes c_{(2)}$.
The graded $K$-bimodule morphism $f$ is of degree $-1$ and $d_{\overline{T(s^{-1}\overline{C})}}\circ f+f\circ d_{s^{-1}\overline{C}}=0$,
i.e., $f\in Z_{-1}\Hom_{K^e}(s^{-1}\overline{C},\overline{T(s^{-1}\overline{C})})$.
By Proposition \ref{Proposition-Der}, $f$ induces a graded $K$-derivation $d_1$ of $T(s^{-1}\overline{C})$ of degree $-1$ given by
$$d_1(\langle c_1|\cdots|c_n \rangle) = \sum\limits_{i=1}^{n}(-1)^{|c_1|+\cdots+|c_{i-1}|+|c_{i1}|+i} \langle c_1|\cdots|c_{i1}|c_{i2}|\cdots|c_n \rangle$$
where $\Delta(c_i) = c_{i1}\otimes c_{i2}$.
Thus $d_0\circ d_1+d_1\circ d_0=0$. Since $d_0^2=0=d_1^2$, $d:=d_0+d_1$ is a differential of $T(s^{-1}\overline{C})$.
The augmented dg $K$-ring $\Omega C:=(T(s^{-1}\overline{C}),\mu,\eta,d,\varepsilon)$ is called the {\it cobar construction} of $C$.

\bigskip

\noindent{\bf Adjointness.} Analogous to \cite[Theorem 2.2.6]{LodVal12}, we have the following result:

\begin{theorem} \label{Theorem-Cobar-Bar}
Let $A$ be an augmented dg $K$-ring and $C$ a cocomplete dg $K$-coring.
Then there exist natural bijections
$$\Hom_{\rm ADGR}(\Omega C,A) \cong \Tw(C,A) \cong \Hom_{\rm CDGC}(C,BA).$$
In particular, $\Omega$ is left adjoint to $B$.
\end{theorem}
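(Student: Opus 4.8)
The plan is to peel off the two bijections one at a time, and in each case to reduce the differential-graded assertion to the purely algebraic universal properties already in hand---Proposition \ref{Proposition-Tensor2} on the cofree side and Proposition \ref{Proposition-Tensor} on the free side---so that the only thing left to verify is that the remaining chain-map condition is precisely the Maurer--Cartan equation. I would begin with the right-hand bijection $\Tw(C,A) \cong \Hom_{\rm CDGC}(C,BA)$. Setting $V = s\overline{A}$, the underlying graded coring of $BA$ is the cofree $T^c(s\overline{A})$, so by the graded version of Proposition \ref{Proposition-Tensor2} a morphism of coaugmented \emph{graded} $K$-corings $\Phi : C \to T^c(s\overline{A})$ is the same datum as a degree-$0$ graded $K$-bimodule map $g := \proj_{s\overline{A}} \circ \Phi : \overline{C} \to s\overline{A}$, reconstructed by $\Phi|_{\overline{C}} = \sum_{n \geq 1} g^{\otimes n} \circ \overline{\Delta}^{\,n-1}$. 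Desuspending, $g$ corresponds bijectively to a degree $-1$ map $\alpha := s^{-1} \circ g : C \to A$ (extended by $0$ on $K$) satisfying $\varepsilon_A \circ \alpha = 0$ and $\alpha \circ \eta_C = 0$; that is, to an arbitrary element of $\Hom_{K^e}(C,A)_{-1}$ annihilating the (co)augmentation.

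It then remains to show that $\Phi$ is a chain map---a morphism of coaugmented \emph{dg} corings---exactly when $\alpha$ is a twisting morphism. Since $T^c(s\overline{A})$ is cofree, the equation $d_{BA} \circ \Phi = \Phi \circ d_C$ holds if and only if the corestriction $\proj_{s\overline{A}} \circ (d_{BA}\circ\Phi - \Phi\circ d_C)$ to the cogenerators vanishes, and I would compute this corestriction termwise. The $d_0$-part of $d_{BA}$ contributes $s \circ d_A \circ \alpha$, the term $\Phi \circ d_C$ contributes $s \circ \alpha \circ d_C$, and the $d_1$-part---whose cogenerator corestriction is assembled from the product $f' = s\,\mu\,(s^{-1}\otimes s^{-1})$ applied to $(g \otimes g)\circ\overline{\Delta}$---contributes $s \circ (\alpha \star \alpha)$. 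Collecting these, the corestriction equals $\pm\, s \circ \bigl(d(\alpha) + \alpha\star\alpha\bigr)$, which vanishes precisely when the Maurer--Cartan equation holds. This is the step where the Koszul signs carried by $s$, $s^{-1}$, and by the explicit formula for $d_1$ must all be tracked exactly, and I expect that sign bookkeeping to be the main obstacle; everything else is formal.

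The left-hand bijection $\Hom_{\rm ADGR}(\Omega C,A) \cong \Tw(C,A)$ is handled by the dual argument. Here $\Omega C = T(s^{-1}\overline{C})$ is free, so by the graded version of Proposition \ref{Proposition-Tensor} a morphism of augmented graded $K$-rings $\Psi$ is determined by its restriction $h : s^{-1}\overline{C} \to \overline{A}$ to generators, corresponding to $\alpha := h \circ s^{-1} : C \to A$ of degree $-1$ (extended by $0$ on $K$). By freeness, $\Psi$ is a chain map if and only if $\bigl(d_A \circ \Psi - \Psi \circ d_{\Omega C}\bigr)|_{s^{-1}\overline{C}}$ vanishes, and evaluating this on generators the $d_0$-part yields $\alpha \circ d_C$, the term $d_A \circ \Psi$ yields $d_A \circ \alpha$, and the $d_1$-part---built via Proposition \ref{Proposition-Der} from the coproduct $\overline{\Delta}$---yields $\alpha\star\alpha$, so again the condition is exactly $d(\alpha) + \alpha\star\alpha = 0$.

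Finally, naturality and the adjunction follow formally. Both bijections are manufactured from natural operations (restriction/corestriction to the (co)generators and composition with the universal structure maps), and Lemma \ref{Lemma-Hom-Tensor-Twisting} shows that $\Tw(-,-)$ is functorial in a compatible way: the maps $f_*$ and $g^*$ on twisting morphisms match post- and pre-composition on the respective Hom-sets. Hence the composite bijection $\Hom_{\rm ADGR}(\Omega C,A) \cong \Hom_{\rm CDGC}(C,BA)$ is natural in both $A$ and $C$, which is exactly the assertion that $\Omega$ is left adjoint to $B$.
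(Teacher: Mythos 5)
Your proposal is correct and follows essentially the same route as the paper: reduce each bijection to the universal property of the free tensor ring (Proposition \ref{Proposition-Tensor}) resp.\ the cofree tensor coring (Proposition \ref{Proposition-Tensor2}) at the graded level, and then identify the remaining chain-map condition on (co)generators with the Maurer--Cartan equation. The paper writes out the $\Hom_{\rm ADGR}(\Omega C,A)\cong\Tw(C,A)$ side and declares the other ``similar,'' whereas you detail the bar side and sketch the cobar side, and you additionally make the naturality explicit via Lemma \ref{Lemma-Hom-Tensor-Twisting}; these are only differences of presentation, not of method.
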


\begin{proof}
For any $\varphi\in\Tw(C,A)$,
$\varphi$ is of degree $-1$ such that $d_A\circ\varphi+\varphi\circ d_C+\varphi\star\varphi=0,\ \varphi\circ\eta_C=0$ and $\varepsilon_A\circ\varphi=0$.
The twisting morphism $\varphi$ induces a graded $K$-bimodule morphism
$\tilde{\varphi}:s^{-1}\overline{C}\stackrel{s}{\cong}\overline{C }\hookrightarrow C\xrightarrow{\varphi}A\twoheadrightarrow\overline{A}$
of degree 0, i.e., $\tilde{\varphi}(s^{-1}c):=\varphi(c)\in\overline{A}$ due to $\varepsilon_A\circ\varphi=0$.
Keep in mind $\Omega C=(T(s^{-1}C),\mu,\eta,d=d_0+d_1,\varepsilon)$,
similar to the proof of Proposition \ref{Proposition-Tensor},
we could show that $\varphi$ induces a unique augmented dg $K$-ring morphism $\Phi:\Omega C\rightarrow A$
given by $\Phi|_K=\id_K$ and $\Phi(\langle c_1|\cdots|c_n\rangle)=\tilde{\varphi}(\langle c_1\rangle)\cdots\tilde{\varphi}(\langle c_1\rangle)$
for all $\langle c_1|\cdots|c_n\rangle \in (s^{-1}\overline{C})^{\otimes n}$ and $n\geq 1$.
Indeed, $d_A\circ\Phi=\Phi\circ(d_0+d_1)$ is equivalent to
$d_A\circ\varphi=-\varphi\circ d_C-\varphi\star\varphi$, thus $\Phi$ commutes with differentials.
The proof of the second bijection is similar.
\end{proof}

\bigskip

\noindent{\bf One-sided (co)bar resolutions.} Applying Theorem \ref{Theorem-Cobar-Bar} to $C = BA$, we get the counit $\epsilon: \Omega BA \rightarrow A$
and a twisting morphism $\pi: BA \rightarrow A$.
Applying Theorem \ref{Theorem-Cobar-Bar} to $A = \Omega C$, we get the unit $\nu: C \rightarrow B\Omega C$
and a twisting morphism $\iota: C \rightarrow \Omega C$,
where $\pi: BA \twoheadrightarrow s\overline{A} \stackrel{s^{-1}}{\cong} \overline{A} \hookrightarrow A$
and $\iota: C \twoheadrightarrow \overline{C} \stackrel{s^{-1}}{\cong} s^{-1}\overline{C} \hookrightarrow \Omega C.$
It follows from Theorem \ref{Theorem-Cobar-Bar} that any twisting morphism $\alpha: C \rightarrow A$
factorizes uniquely through $\pi$ and $\iota$, i.e., we have the following commutative diagram:
$$\xymatrix{ & \Omega C \ar@{.>}[rd]^-{g_\alpha} & \\
C \ar[ur]^\iota \ar[rr]^\alpha \ar@{.>}[dr]_-{f_\alpha} & & A\\ & BA \ar[ur]_\pi & }$$
where $g_\alpha : \Omega C \rightarrow A$ is a morphism of augmented dg $K$-rings
and $f_\alpha : C \rightarrow BA$ is a morphism of coaugmented dg $K$-corings,
which implies that $\pi$ and $\iota$ are {\it universal twisting morphisms}.

\begin{theorem} \label{Theorem-One-Sided-Res}
Let $A$ be an augmented dg $K$-ring and $C$ a cocomplete dg $K$-coring. Then

{\rm (1)} the dg $K$-bimodules $BA \otimes_{\pi} A$ (resp. $A \otimes_{\pi} BA$) is quasi-isomorphic to $K$
as dg right (resp. left) $A$-module, and called the {\rm one-sided bar resolution} of dg right (resp. left) $A$-module $K$.

{\rm (2)} the dg $K$-bimodules $C \otimes_{\iota} \Omega C$ (resp. $\Omega C \otimes_{\iota} C$)
is quasi-isomorphic to $K$ as dg left (resp. right) $C$-comodule, and called the {\rm one-sided cobar resolution} of dg left (resp. right) $C$-comodule $K$.
\end{theorem}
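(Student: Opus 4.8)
The plan is to prove acyclicity of each twisted (co)bar complex by exhibiting an explicit contracting homotopy. The four assertions reduce to two: the left-module statements follow from the right-module ones by replacing $A$ and $C$ by their opposites $A^\op$ and $C^\op$, using the opposite dg $K$-(co)ring construction. So I would treat the bar case, statement (1) for right modules, in detail and then dualize to the cobar case.

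First I would make the twisted differential on $BA \otimes_\pi A$ completely explicit. Recall that the universal twisting morphism $\pi : BA \to A$ is nonzero only on the length-one component $s\overline{A} \cong \overline{A}$. Feeding this into the definition of $d_\pi^r$ together with the coproduct $\Delta$ of $T^c(s\overline{A})$, only the splitting that isolates the last letter survives, so on a generator
$$d_\pi^r([a_1|\cdots|a_n] \otimes a) = \pm [a_1|\cdots|a_{n-1}] \otimes a_n a.$$
Adding the internal and bar differentials $d_0 + d_1$ on $BA$ and $\id \otimes d_A$, I would identify $BA \otimes_\pi A$ with the $K$-reduced normalized two-sided bar complex resolving the trivial module, together with the augmentation $\epsilon := \varepsilon_{BA} \otimes \varepsilon_A : BA \otimes_\pi A \to K$, which is a morphism of dg right $A$-modules ($K$ carrying the $A$-action through $\varepsilon_A$); a one-line check shows $\epsilon$ commutes with $d_\pi$.

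Next I would construct the contracting homotopy. Writing $\overline{a} := a - \eta_A \varepsilon_A(a) \in \overline{A}$ for the projection to the augmentation ideal, define $h : BA \otimes_\pi A \to BA \otimes_\pi A$ of degree $+1$ by
$$h([a_1|\cdots|a_n] \otimes a) := (-1)^{\,|a_1|+\cdots+|a_n|+n}\,[a_1|\cdots|a_n|\overline{a}] \otimes 1_A,$$
the classical extra degeneracy. I would then verify $d_\pi h + h d_\pi = \id - \eta_0 \epsilon$, where $\eta_0 : K \hookrightarrow BA \otimes_\pi A$ sends $1 \mapsto [\,] \otimes 1_A$, by matching term by term the new last-letter contributions of $d_1$ and $d_\pi^r$ against the letters created by $h$; all interior terms telescope and only the identity and the $K$-projection remain. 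This exhibits $\epsilon$ as a deformation retract, hence a quasi-isomorphism, proving (1).

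Finally, for the cobar resolutions in (2) I would dualize. Now $\iota : C \to \Omega C$ is nonzero only on $\overline{C} \cong s^{-1}\overline{C}$, and $d_\iota^r$ splits the first cobar letter off the comodule via the reduced coproduct $\overline{\Delta}$ of $C$; the candidate homotopy moves that first letter back into $C$. The essential new feature is convergence: since the homotopy raises cobar word-length, the identity $d_\iota h + h d_\iota = \id - (\text{$K$-projection})$ cannot be read off globally at once. I would instead invoke the coradical filtration $F_0 C \subset F_1 C \subset \cdots$ and the cocompleteness hypothesis $C = \bigcup_p F_p C$ to write $C \otimes_\iota \Omega C = \varinjlim_p\, F_p C \otimes_\iota \Omega C$, run the homotopy on each finite stage, and pass to the colimit (equivalently, check that the associated filtration spectral sequence converges with acyclic $E^1$-page). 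This is exactly where I expect the main obstacle: beyond the Koszul-sign bookkeeping forced by the suspensions $s^{\pm 1}$, which must be arranged so the telescoping is exact, it is cocompleteness of $C$ that makes the contraction well-defined and the argument convergent—without it the naive homotopy is meaningless and acyclicity can fail.
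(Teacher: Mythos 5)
Your treatment of part (1) is exactly the paper's proof: the paper defines the same extra degeneracy $s([a_1|\cdots|a_n]\otimes a_{n+1})=(-1)^{|a_1|+\cdots+|a_n|+n}[a_1|\cdots|a_n|a_{n+1}]\otimes 1$ for $a_{n+1}\in\overline{A}$ (zero on $BA\otimes K$), checks $p\circ i=\id$ and $\id-i\circ p=d\circ s+s\circ d$, and disposes of the remaining three cases with ``similar for other cases.'' Your reduction of the left-module statements to the right-module ones via opposites is consistent with that.

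The problem is in your analysis of part (2). The contracting homotopy you correctly describe --- supported on $K\otimes\Omega C$ and sending $1\otimes\langle c_1|\cdots|c_n\rangle$ to $\pm\, c_1\otimes\langle c_2|\cdots|c_n\rangle$ --- \emph{lowers} cobar word length; it does not raise it. Consequently there is no convergence issue: each of $d_\iota h$ and $h d_\iota$ produces only finitely many terms on any generator (the twisted part $d_\iota^r(c\otimes x)=\pm\,c_{(1)}\otimes\langle c_{(2)}\rangle x$ uses the single coproduct $\Delta(c)=c\otimes 1+1\otimes c+\overline{\Delta}(c)$, not iterated coproducts), and the identity $d_\iota h+h d_\iota=\id-i p$ can be verified termwise exactly as in the bar case, without invoking cocompleteness at all. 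Your claim that ``without it the naive homotopy is meaningless'' is therefore false for this statement, and the workaround you propose does not actually go through as written: the homotopy $h$ does \emph{not} preserve the coradical filtration, since $h(1\otimes\langle c_1|\cdots|c_n\rangle)=\pm\,c_1\otimes\langle c_2|\cdots|c_n\rangle$ places an arbitrary element $c_1\in\overline{C}$ in the comodule slot, which need not lie in $F_pC$ even when the input does. So ``run the homotopy on each finite stage and pass to the colimit'' is not available, and you would be forced into a genuinely different (and unnecessary) spectral-sequence argument. The repair is simply to delete the convergence discussion and carry out the direct verification of the homotopy identity for $C\otimes_\iota\Omega C$ (and its mirror for $\Omega C\otimes_\iota C$, where the homotopy is supported on $\Omega C\otimes K$ and peels off the last cobar letter), which is what the paper's ``similar for other cases'' amounts to.
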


\begin{proof}
Let $p : BA \otimes_{\pi} A \twoheadrightarrow K \otimes K = K$ be the natural projection
and $i : K = K \otimes K \hookrightarrow BA \otimes_{\pi} A$ the natural embedding.
Define a graded $K$-bimodule morphism $s : BA \otimes_{\pi} A \rightarrow BA \otimes_{\pi} A$ of degree 1 by
mapping $[a_1|\cdots|a_n] \otimes a_{n+1}$ to $(-1)^{|a_1|+\cdots+|a_n|+n}[a_1|\cdots|a_n|a_{n+1}] \otimes 1$
if $a_{n+1} \in \overline{A}$, and to 0 if $a_{n+1} \in K$.
Then $p \circ i = \id$ and $\id - i \circ p = d \circ s + s \circ d$,
i.e., $BA \otimes_{\pi} A$ is homotopy equivalent, thus quasi-isomorphic, to $K$.
Similar for other cases.
\end{proof}

\bigskip

\noindent {\bf Quasi-isomorphisms.} The counit $\epsilon$ and the unit $\nu$ of the adjoint pair $(\Omega,B)$ are quasi-isomorphisms.

\begin{theorem} \label{Theorem-Cobar-Bar-Qis}
Let $A$ be an augmented dg $K$-ring and $C$ a cocomplete dg $K$-coring. Then
the counit $\epsilon : \Omega BA \rightarrow A$ is a quasi-isomorphism of dg $K$-rings,
and the unit $\nu : C \rightarrow B\Omega C$ is a quasi-isomorphism of dg $K$-corings.
\end{theorem}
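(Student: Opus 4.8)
The plan is to view both assertions as special cases of a single comparison theorem for twisting morphisms, whose engine is the acyclicity of the one-sided (co)bar resolutions from Theorem~\ref{Theorem-One-Sided-Res}. First, by the adjunction of Theorem~\ref{Theorem-Cobar-Bar}, the counit $\epsilon:\Omega BA\to A$ is precisely the augmented dg $K$-ring morphism $g_\pi$ determined by the universal twisting morphism $\pi:BA\to A$, so that $\pi=\epsilon\circ\iota_{BA}$; dually, the unit $\nu:C\to B\Omega C$ is the coaugmented dg $K$-coring morphism $f_\iota$ attached to the universal twisting morphism $\iota:C\to\Omega C$, so that $\iota=\pi_{\Omega C}\circ\nu$. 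Thus it suffices to prove: for $\alpha\in\Tw(C,A)$ with $C$ cocomplete such that both twisted tensor products $C\otimes_\alpha A$ and $A\otimes_\alpha C$ are quasi-isomorphic to $K$ (i.e.\ $\alpha$ is Koszul), the morphisms $g_\alpha:\Omega C\to A$ and $f_\alpha:C\to BA$ are quasi-isomorphisms. By Theorem~\ref{Theorem-One-Sided-Res} the hypothesis holds for $\pi$ (take $C=BA$) and for $\iota$ (take $A=\Omega C$), whence the theorem.

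To treat $g_\alpha$ I would first promote it to the level of twisted tensor products. Since $\alpha=g_\alpha\circ\iota_C$, the left-module analogue of Lemma~\ref{Lemma-Hom-Tensor-Twisting}(3) produces a morphism of dg $K$-bimodules $g_\alpha\otimes\id_C:\Omega C\otimes_{\iota_C}C\to A\otimes_\alpha C$. By Theorem~\ref{Theorem-One-Sided-Res} the source is quasi-isomorphic to $K$, and by hypothesis so is the target, both via the canonical retraction onto $K=K\otimes K$ with which $g_\alpha\otimes\id_C$ is compatible; hence $g_\alpha\otimes\id_C$ is itself a quasi-isomorphism. Symmetrically, writing $\alpha=\pi\circ f_\alpha$ and invoking Lemma~\ref{Lemma-Hom-Tensor-Twisting}(4) yields a quasi-isomorphism $f_\alpha\otimes\id_A:C\otimes_\alpha A\to BA\otimes_\pi A$ of dg $K$-bimodules.

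It remains to descend from these tensored quasi-isomorphisms to $g_\alpha$ and $f_\alpha$ themselves, and I would do this with the coradical filtration of the coalgebra tensor-factor. On $\Omega C\otimes_{\iota_C}C$ and $A\otimes_\alpha C$ one filters by $F_pC$, a filtration that is exhaustive and bounded below exactly because $C$ is cocomplete. In the associated graded the twisting term $d_\alpha^l$ together with the reduced coproduct strictly lower the filtration and drop out, so the $E^0$-differential reduces to the untwisted tensor differential; since $K=k^t$ is semisimple and $k$ is a field, Künneth then identifies the $E^1$-pages $K^e$-linearly and expresses the induced map there through $H(g_\alpha)$. A comparison of the two resulting convergent spectral sequences, both abutting to $H(K)=K$, yields that $g_\alpha$ is a quasi-isomorphism; the argument for $f_\alpha$, filtering $BA$ by its coradical filtration, is dual.

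The main obstacle is precisely this last descent, namely converting the quasi-isomorphism of twisted tensor products into a quasi-isomorphism of the bare morphisms. This is the genuine content of the fundamental theorem of twisting morphisms (the analogue of \cite[Theorem~2.3.2]{LodVal12}), and the new subtlety here is that it must be run $K^e$-linearly rather than $k$-linearly: every filtration, the Künneth identification on the associated graded, and the convergence of the spectral sequence have to be organized for the coradical filtration over $K^e$, with cocompleteness of $C$ guaranteeing exhaustiveness and the semisimplicity of $K$ over the field $k$ guaranteeing the flatness needed for Künneth. Once this spectral-sequence comparison is in place, the remaining verifications—that $\epsilon=g_\pi$, that $\nu=f_\iota$, and that the displayed maps are the asserted chain maps—are routine and are already prepared by Theorem~\ref{Theorem-Cobar-Bar} and Lemma~\ref{Lemma-Hom-Tensor-Twisting}.
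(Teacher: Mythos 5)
Your reduction of the theorem to the statement ``$g_\alpha$ and $f_\alpha$ are quasi-isomorphisms whenever the twisted tensor products are acyclic'' is a legitimate reformulation (the identifications $\epsilon=g_\pi$, $\nu=f_\iota$ and the quasi-isomorphy of $g_\alpha\otimes\id_C$ and $f_\alpha\otimes\id_A$ are all fine), but the descent step that you yourself flag as the main obstacle contains a genuine gap, and it is not repaired by the $K^e$-linear bookkeeping you describe. The inference ``both spectral sequences converge to $H(K)=K$ and the map on $E^1$ factors through $H(g_\alpha)$, hence $H(g_\alpha)$ is an isomorphism'' runs the mapping lemma backwards: an isomorphism on abutments does not imply an isomorphism on $E^1$ (two acyclic filtered complexes can have wildly different $E^1$-pages). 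What is actually needed is a Zeeman-type comparison (base and abutment isomorphisms force a fiber isomorphism), and that induction requires that in each total degree only finitely many columns of the coradical filtration contribute. This fails for a general augmented dg $K$-ring $A$: if $\overline A$ has a nonzero class in degree $-1$, then $(s\overline A)^{\otimes n}$ contributes to total degree $0$ of $BA\otimes_\pi A$ for every $n$, so the filtration by word length, while exhaustive and bounded below, is unbounded in each total degree and the induction never gets off the ground. This is precisely why Loday--Vallette state their Theorem 2.3.2 only for \emph{connected weight-graded} (co)algebras, a hypothesis absent here; cocompleteness of $C$ and semisimplicity of $K$ give exhaustiveness and K\"unneth, but not the per-degree finiteness the comparison needs.

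The paper sidesteps all convergence issues by working at the chain level: it writes down an explicit section $\lambda:A\to\Omega BA$ and a degree-$1$ map $s$ with $\epsilon\circ\lambda=\id$ and $\lambda\circ\epsilon-\id=d\circ s+s\circ d$ (and dually a retraction $\rho$ and homotopy $h$ for $\nu$), so that $\epsilon$ and $\nu$ are homotopy equivalences outright. If you want to keep your spectral-sequence architecture you must either impose a boundedness/connectivity hypothesis (defeating the generality of the theorem) or replace the descent by such an explicit contraction -- at which point the detour through the fundamental theorem of twisting morphisms is no longer doing any work.
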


\begin{proof}
Applying Theorem \ref{Theorem-Cobar-Bar} to $C=BA$, we have $\Hom_{\rm ADGR}(\Omega BA,A)\cong \linebreak \Tw(BA,A)$
and $\epsilon$ is completely determined by $\pi_A $.
By the proof of Theorem \ref{Theorem-Cobar-Bar}, we get that $\epsilon: \Omega BA \rightarrow A$ maps
$\langle[a_1]|\cdots|[a_n]\rangle$ to $a_1a_2\cdots a_n$ for all $a_1,a_2,\cdots,a_n\in \overline{A}$, $a$ to $a$ for all $a\in K$, and others to 0.
Define a dg $K$-bimodule morphism $\lambda : A \rightarrow \Omega BA$ by mapping $a$ to $\langle[a]\rangle$ if $a \in \overline{A}$, and $a$ to $a$ if $a \in K$,
and a graded $K$-bimodule morphism $s: \Omega BA \rightarrow \Omega BA$ of degree 1 by
mapping $\langle[a_1]|\cdots|[a_n]|[b_1|\cdots|b_m]|\cdots\rangle$ to
$$\sum\limits_{i=2}^n(-1)^{\beta_{i-1}+1} \langle [a_1\cdots a_{i-1}|a_i]|[a_{i+1}]|\cdots|[a_n]|[b_1|\cdots|b_m]|\cdots\rangle$$
$$+(-1)^{\beta_n+1}\langle [a_1\cdots a_n|b_1|\cdots|b_m]|\cdots\rangle$$
if $n \geq 1$ and $m \geq 2$, $\langle[a_1]|\cdots|[a_n]\rangle$ to
$$\sum\limits_{i=2}^n(-1)^{\beta_{i-1}+1} \langle [a_1\cdots a_{i-1}|a_i]|[a_{i+1}]|\cdots|[a_n]\rangle$$
if $n \geq 2$, and others to 0, where $\beta_i=\sum\limits_{j=1}^i|a_j|, \ a_i, \ b_j \in A,$ for all $1 \leq i \leq n,\ 1 \leq j \leq m$.
Then $\epsilon \circ \lambda = \id$ and $\lambda \circ \epsilon - \id = d \circ s + s \circ d$.
Thus $\epsilon$ is a homotopy equivalence, in particular, a quasi-isomorphism.

Applying Theorem \ref{Theorem-Cobar-Bar} to  $A = \Omega C$, we have  $\Hom_{\rm ADGC}(C,B\Omega C)\cong \Tw(C,\Omega C)$ and  $\nu$ is completely determined by $\iota_C $. By the proof of Theorem \ref{Theorem-Cobar-Bar}, we get that
$\nu: C \rightarrow B\Omega C$ maps $c \in K$ to $c$, and $c \in \overline{C}$ to
$\sum\limits_{i=0}^{\infty} [\langle c_i^1\rangle |\cdots|\langle c_i^{i+1}\rangle]$,
where $\overline{\Delta}^i(c) = c_i^1\otimes\cdots\otimes c_i^{i+1}$.
Define a dg $K$-bimodule morphism $\rho : B\Omega C \rightarrow C$
by mapping $[\langle c\rangle]$ to $c$ for $c \in \overline{C}$, $c$ to $c$ for $c \in K$, and others to 0.
Define a graded $K$-bimodule morphism $h: B\Omega C \rightarrow B\Omega C$ of degree 1 by mapping
$[a_1|\cdots|a_n|\langle c_1|\cdots|c_m\rangle]$ to
$(-1)^\gamma \sum\limits_{i=0}^{\infty} [a_1|\cdots|a_n|\langle c_1|\cdots|c_{m-1}\rangle|\langle
c_{mi}^1 \rangle|\cdots|\langle c_{mi}^{i+1} \rangle]$ if $m \geq 2$ and $n \geq 0$, and to 0 otherwise,
where $a_i \in \Omega C, \ 1 \leq i \leq n, \ c_j \in C, \ 1 \leq j\leq m$,
$\gamma = |a_1|+\cdots+|a_n|+|c_1|+\cdots+|c_{m-1}|+n+m$ and
$\overline{\Delta}^i(c_m) = c_{mi}^1\otimes\cdots\otimes c_{mi}^{i+1}$.
Then $\rho \circ \nu = \id$ and $\nu \circ \rho - \id = d\circ h + h \circ d$.
Thus $\nu$ is a homotopy equivalence, in particular, a quasi-isomorphism.
\end{proof}

\begin{lemma} \label{Lemma-Semiproj} Let $A$ be an augmented dg $K$-ring,
$C$ a cocomplete dg $K$-coring, $\alpha\in \Tw(C,A)$ and $N$ a dg $C$-bicomodule.
Then the dg $A$-bimodule $A\otimes_{\alpha}N\otimes_{\alpha}A$ is semi-projective.
\end{lemma}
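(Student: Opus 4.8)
The plan is to show that $P:=A\otimes_\alpha N\otimes_\alpha A$ carries an exhaustive increasing filtration by dg $A$-sub-bimodules whose successive quotients are free dg $A$-bimodules; by standard dg homological algebra \cite{AvrFoxHal00} this forces $P$ to be semi-projective. First I would record that, after forgetting differentials, the underlying graded $A^e$-module of $P$ is $A\otimes_K N\otimes_K A\cong A^e\otimes_{K^e}N$. Since $K=k^t$, the enveloping ring $K^e=K\otimes_k K\cong k^{t^2}$ is semisimple, so $N$ is a projective graded $K^e$-module and hence $P$ is a projective graded $A^e$-module. Thus only the interaction of the differential with a filtration remains to be arranged.

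Next I would exploit the cocompleteness of $C$. The coradical filtration of $C$ induces an exhaustive increasing filtration $F_0N\subseteq F_1N\subseteq\cdots$ with $N=\bigcup_{p\ge 0}F_pN$, where $F_pN$ collects the elements annihilated by the $p$-fold iterated reduced left and right coactions; since these coactions commute with $d_N$, each $F_pN$ is a dg $K^e$-submodule. Set $F_pP:=A\otimes_K F_pN\otimes_K A$. The internal part $d_0=d_A\otimes\id\otimes\id+\id\otimes d_N\otimes\id+\id\otimes\id\otimes d_A$ clearly preserves each $F_pP$. The twisted part $d_1=\id_A\otimes d_\alpha^r-d_\alpha^l\otimes\id_A$ is assembled from $\Delta_N^r$ and $\Delta_N^l$ followed by $\alpha$; because $\alpha\circ\eta_C=0$, only the reduced coactions contribute, so on $F_pP$ the map $d_1$ lands in $A\otimes_K F_{p-1}N\otimes_K A=F_{p-1}P$. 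Hence each $F_pP$ is a dg $A$-sub-bimodule, the filtration is exhaustive, and on the associated graded the twisted differential vanishes, so that $\gr_pP\cong A^e\otimes_{K^e}(\gr_pN)$ carries only the differential induced by $d_0$, i.e. it is the free dg $A$-bimodule generated by the dg $K^e$-module $(\gr_pN,\bar d_N)$.

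It then remains to conclude that these free bimodules, and consequently $P$ itself, are semi-projective. Every dg $K^e$-module is semi-projective, because $K^e$ is semisimple and an acyclic complex of $K^e$-modules splits degreewise, hence is contractible. The extension functor $A^e\otimes_{K^e}(-)$ is left adjoint to restriction along $K^e\hookrightarrow A^e$, and restriction preserves surjective quasi-isomorphisms; therefore $A^e\otimes_{K^e}(-)$ sends semi-projective dg $K^e$-modules to semi-projective dg $A^e$-modules, so each $\gr_pP$ is semi-projective. Finally, an exhaustive increasing filtration of a dg module by dg submodules with semi-projective subquotients has semi-projective total object, whence $P$ is semi-projective.

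I expect the main obstacle to be the verification that $d_1$ strictly lowers the filtration, that is, that after applying $\alpha$ only the reduced coactions survive and that these decrease the coradical degree; this is exactly the point at which the hypothesis that $C$ is cocomplete (guaranteeing that $F_\bullet N$ is exhaustive and that the twisted differential is filtration-lowering) is indispensable. A secondary point requiring care is the passage from "semi-projective subquotients along an exhaustive filtration" to "semi-projective total object": this uses that each extension $0\to F_{p-1}P\to F_pP\to\gr_pP\to 0$ splits as graded $A^e$-modules (valid since $\gr_pP$ is graded-projective) together with the lifting characterization of semi-projectivity.
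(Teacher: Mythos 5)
Your proposal is correct and follows the paper's overall strategy (an exhaustive filtration of $N$ coming from cocompleteness of $C$, with the twisted part of the differential strictly lowering filtration degree), but it diverges from the paper at the one step where the inner differential $d_N$ has to be dealt with, and your route there is genuinely different. The paper does \emph{not} allow $d_N$ to survive on the subquotients: it refines the coradical filtration by inserting $F_{m-1}+d_NF_m$ between $F_{m-1}$ and $F_m$, so that every subquotient carries only the outer differential $d_A\otimes\id\otimes\id+\id\otimes\id\otimes d_A$ and is a relatively projective dg $A$-bimodule in the sense of \cite{Kel94}. You instead keep the induced differential $\bar d_N$ on $\gr_pN$ and argue that, since $K^e\cong k^{t^2}$ is semisimple, \emph{every} dg $K^e$-module is semi-projective, whence $A^e\otimes_{K^e}(\gr_pN,\bar d_N)$ is semi-projective by the induction--restriction adjunction. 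Both arguments are valid; yours trades the (slightly fiddly) refinement of the filtration for an explicit appeal to the semisimplicity of $K^e$, which the paper's hypotheses supply for free, while the paper's refinement keeps the subquotients in the standard class of relatively projective bimodules and so can quote \cite{Kel94} verbatim. One point of precision you should fix: the filtration must be indexed by the \emph{total} coradical degree, $F_mN=\sum_{i+j=m}\bigl(\Ker(\overline{\Delta^l})^i\cap\Ker(\overline{\Delta^r})^j\bigr)$, as in the paper. If you take the diagonal terms $\Ker(\overline{\Delta^l})^p\cap\Ker(\overline{\Delta^r})^p$ as your $F_pN$, then $\overline{\Delta^l}$ only lowers the left exponent and the twisted differential does not land in $F_{p-1}P$; with the total-degree indexing the strict lowering you need does hold. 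Your closing remarks on passing from semi-projective graded-split subquotients to the colimit are fine and are exactly what the paper leaves implicit.
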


\begin{proof}
Define $\overline{\Delta^l} :N\rightarrow \overline{C}\otimes N$ by $\overline{\Delta^l}(x)=\Delta^l(x)-1\otimes x$,
where $\Delta^l$ is the left coaction on $N$.
Similarly, define $\overline{\Delta^r}: N\rightarrow N\otimes \overline{C}$ by $\overline{\Delta^r}(x)=\Delta^r(x)-x\otimes 1$,
where $\Delta^r$ is the right coaction on $N$.
Then $(\id\otimes \overline{\Delta^l})\circ \overline{\Delta^l}=(\overline{\Delta}\otimes \id)\circ\overline{\Delta^l}$
and $(\overline{\Delta^r}\otimes \id)\circ \overline{\Delta^r}=(\id\otimes \overline{\Delta})\circ\overline{\Delta^r}$.
Define $(\overline{\Delta^l})^n: N\rightarrow\overline{C}^{\otimes n}\otimes N$ for $n\geq 1$ inductively by
$(\overline{\Delta^l})^1:=\overline{\Delta^l}$ and
$(\overline{\Delta^l})^n:=(\id\otimes(\overline{\Delta^l})^{n-1})\circ\overline{\Delta^l}$ for $n\geq 2$.
Define $(\overline{\Delta^r})^n: N\rightarrow N\otimes \overline{C}^{\otimes n}$ for $n\geq 1$ inductively by
$(\overline{\Delta^r})^1:=\overline{\Delta^r}$ and $(\overline{\Delta^r})^n:=((\overline{\Delta^r})^{n-1}\otimes\id)\circ\overline{\Delta^r}$ for $n\geq 2$.
Then
$(\overline{\Delta}^{n-1}\otimes \id)\circ\overline{\Delta^l}=(\overline{\Delta^l})^n=(\id\otimes(\overline{\Delta^l})^{n-1})\circ\overline{\Delta^l}$
and $(\id\otimes\overline{\Delta}^{n-1})\circ\overline{\Delta^r}=(\overline{\Delta^r})^n=((\overline{\Delta^r})^{n-1}\otimes\id)\circ\overline{\Delta^r}$.

Put $N_{ij}=\Ker(\overline{\Delta^l})^i\cap\Ker(\overline{\Delta^r})^j$ for all $i,j\geq 1$. Then
$N_{ij}\subset N_{i+1,j}$, $N_{ij}\subset N_{i,j+1}$, $\overline{\Delta^l}(N_{ij})\subset \overline{C}\otimes N_{i-1,j}$
and $\overline{\Delta^r}(N_{ij})\subset N_{i,j-1}\otimes\overline{C}$.
Let $F_m=\sum\limits_{i+j=m}N_{ij}$. Then $N$ admits a dg $C$-bicomodule filtration
$$0=F_1 \subset F_2 \subset \cdots \subset F_{m-1} \subset F_m \subset \cdots \subset N.$$
Since $C$ is cocomplete, for any $c\in \overline{C}$, there exists a positive integer $n$ such that $\overline{\Delta}^n(c)=0$.
Hence, for any $x\in N,\ (\overline{\Delta^l})^{n+1}(x)=((\overline{\Delta}^{n}\otimes \id)\circ\overline{\Delta^l})(x)
=(\overline{\Delta}^n\otimes \id)(c_1\otimes x_2)=0$ for some large enough positive integer $n$.
Similarly, $(\overline{\Delta^r})^m(x)=0$ for some large enough positive integer $m$.
Thus $x$ belongs to $N_{n,m}$.
Therefore, the filtration is exhaustive, i.e., $N=\bigcup\limits_{m=1}^\infty F_m$.

Since $d_NF_m \subset F_m$ for all $m \geq 1$, we have a refined dg $C$-bicomodule filtration
$$0=F_1 \subset \cdots \subset F_{m-1} \subset F_{m-1}+d_NF_m \subset F_m \subset F_m+d_NF_{m+1} \subset F_{m+1} \subset \cdots \subset N.$$
Such a filtration of $N$ induces an exhaustive  dg $A$-bimodule filtration of $A \otimes_\alpha N \otimes_\alpha A$:
$$0=A\otimes_\alpha F_1 \otimes_\alpha A \subset \cdots \subset A\otimes_\alpha F_m \otimes_\alpha A \subset
A\otimes_\alpha(F_m + d_NF_{m+1})\otimes_\alpha A \subset \cdots \subset A \otimes_\alpha N \otimes_\alpha A.$$
The differential of $A \otimes_\alpha N\otimes_\alpha A$ is
$$d_{A} \otimes \id \otimes \id + \id \otimes d_N \otimes \id + \id \otimes \id \otimes d_{A} + \id \otimes d_{\alpha}^r - d_{\alpha}^l \otimes \id.$$
Denote $\id \otimes d_N \otimes \id + \id \otimes d_{\alpha}^r - d_{\alpha}^l \otimes \id$ by $\tilde{d}$.
Then
$$\tilde{d}(A\otimes_\alpha F_m \otimes_\alpha A) \subset A\otimes_\alpha(F_{m-1} + d_NF_m)\otimes_\alpha A$$
and
$$\tilde{d}(A\otimes_\alpha(F_m + d_NF_{m+1})\otimes_\alpha A) \subset A\otimes_\alpha F_m \otimes_\alpha A.$$
Thus the differentials of the dg $A$-bimodule subfactors
$$(A\otimes_\alpha F_{m+1} \otimes_\alpha A)/(A\otimes_\alpha (F_m+d_NF_{m+1}) \otimes_\alpha A) \cong A\otimes_\alpha (F_{m+1}/(F_m+d_NF_{m+1})) \otimes_\alpha A$$
and
$$(A\otimes_\alpha (F_m + d_NF_{m+1}) \otimes_\alpha A)/(A\otimes_\alpha F_m \otimes_\alpha A) \cong A\otimes_\alpha((F_m + d_NF_{m+1})/F_m) \otimes_\alpha A$$
are just $d_A \otimes \id \otimes \id + \id \otimes \id \otimes d_A$,
i.e., all these subfactors are relatively projective dg $A$-bimodules \cite{Kel94}.
Therefore, $A \otimes_\alpha N \otimes_\alpha A$ is semi-projective.
\end{proof}

\bigskip

\noindent{\bf Two-sided bar resolutions.} For an augmented dg $K$-ring $A$,
there is a standard semi-projective resolution of the dg $A$-bimodule $A$, i.e., its two-sided bar resolution.

\begin{theorem} \label{Theorem-Two-sidedBarRes}
Let $A$ be an augmented dg $K$-ring.
Then $A \otimes_{\pi} BA \otimes_{\pi} A$ is a semi-projective resolution of the dg $A$-bimodule $A$,
called the {\rm two-sided bar resolution} of $A$. More precisely, $A \otimes_{\pi} BA \otimes_{\pi} A$ is a semi-projective dg $A$-bimodule and the composition
$$\tilde{\mu} : A \otimes_\pi BA \otimes_\pi A \xrightarrow{\id \otimes \varepsilon \otimes \id} A \otimes K \otimes A = A \otimes A \xrightarrow{\mu} A$$
is a quasi-isomorphism of dg $A$-bimodules.
\end{theorem}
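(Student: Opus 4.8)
The plan is to prove the two assertions separately: first that $A \otimes_\pi BA \otimes_\pi A$ is semi-projective, and then that $\tilde{\mu}$ is a quasi-isomorphism. The semi-projectivity will be immediate from earlier results, while the quasi-isomorphism will require an explicit contracting homotopy modeled on the one-sided case.

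For semi-projectivity I would simply invoke Lemma \ref{Lemma-Semiproj}. Recall that $BA = T^c(s\overline{A})$ is a cocomplete coaugmented dg $K$-coring, that it is a dg $BA$-bicomodule over itself via its coproduct $\Delta$, and that the universal twisting morphism $\pi \in \Tw(BA,A)$ was produced in the discussion following Theorem \ref{Theorem-Cobar-Bar}. Applying Lemma \ref{Lemma-Semiproj} with $C = N = BA$ and $\alpha = \pi$ then yields at once that the dg $A$-bimodule $A \otimes_\pi BA \otimes_\pi A$ is semi-projective.

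For the quasi-isomorphism I would construct a contracting homotopy for the augmented complex $A \otimes_\pi BA \otimes_\pi A \xrightarrow{\tilde{\mu}} A \to 0$, in direct analogy with the one-sided resolution of Theorem \ref{Theorem-One-Sided-Res} and with the proof that the counit $\epsilon$ is a quasi-isomorphism. First define a $K$-bimodule section $\sigma : A \to A \otimes_\pi BA \otimes_\pi A$ by $\sigma(a) := a \otimes [\,] \otimes 1$, where $[\,] \in K \subset BA$ is the empty bar word; since $\varepsilon([\,]) = 1_K$ one checks immediately that $\tilde{\mu} \circ \sigma = \id_A$. Next define a degree $+1$ graded $K$-bimodule morphism $s$ on $A \otimes_\pi BA \otimes_\pi A$ that absorbs the right-most tensor factor into the bar word,
$$s\big(a_0 \otimes [a_1|\cdots|a_n] \otimes a_{n+1}\big) = (-1)^{|a_0|+\cdots+|a_n|+n}\, a_0 \otimes [a_1|\cdots|a_n|a_{n+1}] \otimes 1$$
when $a_{n+1} \in \overline{A}$, and $s = 0$ on terms with $a_{n+1} \in K$ (the precise sign to be pinned down during the verification). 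The goal is then to establish the homotopy identity $\id - \sigma \circ \tilde{\mu} = d \circ s + s \circ d$, which together with $\tilde{\mu} \circ \sigma = \id_A$ exhibits $\tilde{\mu}$ as a homotopy equivalence of underlying complexes, hence a quasi-isomorphism; combined with the first part this gives the semi-projective resolution.

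The heart of the verification, and the step I expect to be the main obstacle, is the computation of $d \circ s + s \circ d$. Here the differential $d = d_{A\otimes BA \otimes A} + \id_A \otimes d_\pi^r - d_\pi^l \otimes \id_A$ has three kinds of contributions, and the Koszul signs coming from the suspension $s\overline{A}$ must be tracked with care. The key structural observation is that $s$ only alters the bar word and the right-most factor, whereas the left-twisting term $-d_\pi^l \otimes \id_A$ and the internal differential $d_A$ on the left-most factor act only on $a_0$ and the left end of the bar; these should commute with $s$ up to sign and therefore cancel between $d \circ s$ and $s \circ d$, so that $a_0$ and the left twisting behave as spectators. What survives is exactly the telescoping cancellation among the internal bar differential $d_0$, the multiplication term $d_1$ of $BA$, and the right-twisting $\id_A \otimes d_\pi^r$, which is formally identical to the computation already carried out for $BA \otimes_\pi A$ in Theorem \ref{Theorem-One-Sided-Res} and collapses to $\id - \sigma \circ \tilde{\mu}$. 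Thus once the signs are fixed so that the spectator terms cancel and the remaining terms telescope, both assertions are established.
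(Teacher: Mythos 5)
Your proposal is correct and follows essentially the same route as the paper: semi-projectivity is obtained by the identical appeal to Lemma \ref{Lemma-Semiproj} with $N=BA$, and the quasi-isomorphism is established by an explicit contracting homotopy. The only difference is a left--right mirror image: the paper's homotopy absorbs the left factor $a_0$ into the bar word with section $a\mapsto 1\otimes 1\otimes a$, whereas yours absorbs the right factor $a_{n+1}$ with section $a\mapsto a\otimes 1\otimes 1$; both choices work, and yours has the mild advantage of literally reusing the computation from Theorem \ref{Theorem-One-Sided-Res}.
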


\begin{proof}
Since $BA$ is cocomplete, by Lemma \ref{Lemma-Semiproj}, the dg $A$-bimodule $A\otimes_\pi BA\otimes_\pi A$ is semi-projective.
To prove that $\tilde{\mu}$ is a quasi-isomorphism, it is enough to show that $\tilde{\mu}$ is a homotopy equivalence.
For this, we define a homotopy inverse $i : A \rightarrow A \otimes_\pi BA \otimes_\pi A, \
a \mapsto 1\otimes 1\otimes a$, of $\tilde{\mu}$, and a graded $K$-bimodule morphism
$s: A \otimes_\pi BA \otimes_\pi A \rightarrow A \otimes_\pi BA \otimes_\pi A$ of degree 1
by mapping $a_0\otimes[a_1|\cdots|a_n]\otimes a_{n+1}$ to $1\otimes[a_0|a_1|\cdots|a_n]\otimes a_{n+1}$.
Then $\tilde{\mu} \circ i = \id_A$ and $i \circ \tilde{\mu}-\id = d \circ s + s \circ d$.
Thus $\tilde{\mu}$ is a homotopy equivalence.
\end{proof}

\begin{remark}{\rm
The two-sided bar resolution $A \otimes_\pi  B A \otimes_\pi A$ of an augmented dg $K$-ring $A$
is a semi-projective resolution of the dg $A$-bimodule $A$,
which is smaller than the usual two-sided bar resolution of the augmented dg algebra $A$.
Use this smaller semi-projective resolution, some calculations on Hochschild (co)homology and cyclic homology become simpler \cite{Cil90}.
}\end{remark}

\section{Hochschild (co)homologies of dg $K$-rings and their Koszul duals}

In this section, we will formulate the calculus theory of the Hochschild homology and cohomology of augmented dg $K$-rings,
and compare the Hochschild (co)homologies of a complete typical dg $K$-ring $A$ and its Koszul dual $\Omega A^\vee$.
Moreover, we will study the homological smoothness of $\Omega A^\vee$.

\subsection{Calculi}

\noindent {\bf Hochschild cochain complex.} The {\it Hochschild cochain complex} of $A$ is
$C^\bullet(A) := \Hom_{K^e}^\pi(BA,A) \cong \Hom_{A^e}(A \otimes_\pi BA \otimes_\pi A, A) \cong \RHom_{A^e}(A,A)$.
Its cohomology is called the {\it Hochschild cohomology} of $A$, denoted by $HH^\bullet(A)$.
Note that the bullet $\bullet$ indicates weight in $C^\bullet(A)$ and degree in $HH^\bullet(A)$.
More precisely, $C^\bullet(A) = \prod\limits_{n \in \mathbb{N}} C^n(A)$ where $C^n(A) := \Hom_{K^e}((s\overline{A})^{\otimes n},A)$,
and the differential of $C^\bullet(A)$ is $\dz = \dz_0+\dz_1$ where $\dz_0$ is the {\it inner differential} given by
$$\dz_0(f)[a_1|\cdots|a_n] := d_Af[a_1|\cdots|a_n] + \sum_{i=1}^n(-1)^{\varepsilon_{i-1}+|f|}f[a_1|\cdots|d_Aa_i|\cdots|a_n],$$
and $\dz_1$ is the {\it external differential} given by
$$\begin{array}{ll} \dz_1(f)[a_1|\cdots|a_{n+1}] := & (-1)^{|f|(|a_1|+1)}a_1f[a_2|\cdots|a_{n+1}] \\
& \quad + \sum\limits_{i=1}^n(-1)^{|f|+\varepsilon_i}f[a_1|\cdots|a_ia_{i+1}|\cdots|a_{n+1}] \\
& \quad\quad + (-1)^{|f|+\varepsilon_n+1}f[a_1|\cdots|a_n]a_{n+1}, \end{array}$$
for all $f \in C^n(A)$.
Here $\varepsilon_i := \sum\limits_{j=1}^{i}(|a_j|+1)$.

\begin{remark}{\rm
(1) The definition of $\delta$, more precisely, $\delta_1$, here is somewhat different from that in \cite{Men09,Abb15,Her15}.
Indeed, ours is induced from the $K$-reduced two-sided bar resolution.

(2) Compare with the calculus theory in other literatures, we use $K$-reduced two-sided bar resolution here
rather than the ordinary two-sided bar resolution as a semi-projective resolution of the dg $A$-bimodule $A$.
Thus our Hochschild cochains are different, more precisely, less. However, the related results and their proofs are same.
Just because of this, we only look though the whole calculus theory but omit almost all proofs.
}\end{remark}

\bigskip

\noindent {\bf Cup product.} For $f \in C^m(A)$ and $g \in C^n(A)$, the {\it cup product} $f \cup g \in C^{m+n}(A)$ is defined by
$$(f \cup g)[a_1|\cdots|a_{m+n}] := (-1)^{|g|\varepsilon_m}f[a_1|\cdots|a_m]g[a_{m+1}|\cdots|a_{m+n}].$$
where $\varepsilon_m:=\sum\limits_{j=1}^m(|a_j|+1)$ as before.

\begin{remark}{\rm
Since $BA$ is a coaugmented dg $K$-coring and $A$ is an augmented dg $K$-ring,
$\Hom_{K^e}^\pi(BA,A)$ is an augmented dg $K$-ring on convolution product.
The cup product is just the convolution product.
}\end{remark}

The cup product satisfies the following properties:

\begin{lemma} \label{Lemma-Cup} For all $f \in C^m(A)$ and $g \in C^n(A)$, one has

{\rm (1)} $\dz(f \cup g) = \dz f \cup g + (-1)^{|f|}f \cup \dz g$;

{\rm (2)} $(f \cup g) \cup h = f \cup (g \cup h)$;

{\rm (3)} $f \cup g -(-1)^{|f||g|}g \cup f = (-1)^{|f|+1} \dz f \circ g + (-1)^{|f|}\dz(f \circ g) + f \circ \dz g$.
\end{lemma}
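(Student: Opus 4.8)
The plan is to treat the three statements quite differently: parts (1) and (2) are structural and essentially free, while (3) is the only genuine computation. For (1) and (2) I would simply invoke the identification $C^\bullet(A) = \Hom^{\pi}_{K^e}(BA,A)$ from the definition of the Hochschild cochain complex, together with the Remark that the cup product $\cup$ coincides with the convolution product $\star$. By the subsection on twisting morphisms, $\Hom^{\pi}_{K^e}(BA,A) = (\Hom_{K^e}(BA,A),\star,\eta,d_\pi,\varepsilon)$ is an \emph{augmented dg $K$-ring}. Associativity of $\star$ is part of the $K$-ring axioms (it descends from the coassociativity of $\Delta_{BA}$ and the associativity of $\mu_A$), which gives (2) verbatim. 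Likewise, in any dg $K$-ring the differential is a graded $K$-derivation of the product, since $\mu$ is required to be a morphism of dg $K$-bimodules; hence $d_\pi = \dz$ satisfies $\dz(f\cup g) = \dz f \cup g + (-1)^{|f|} f \cup \dz g$, which is (1). No sign chasing is needed once the Koszul signs in the two descriptions are matched: the factor $(-1)^{|g|\varepsilon_m}$ in the cup product is exactly the Koszul sign produced by $f\otimes g$ acting after $\Delta_{BA}$, because $|[a_1|\cdots|a_m]| = \varepsilon_m$, so picking out the weight-$(m,n)$ summand of $\Delta_{BA}$ reproduces the cup formula on the nose.

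For (3) I would first recall the Gerstenhaber composition (brace) product $\circ$: for $f \in C^m(A)$ and $g \in C^n(A)$ one sets $f \circ g = \sum_{i=1}^m \pm\, f \circ_i g$, where $f\circ_i g$ feeds the output of $g$ into the $i$-th slot of $f$, the signs being dictated by the Koszul rule on the suspended bar $(s\overline{A})^{\otimes \bullet}$. The identity (3) is then proved by a direct expansion of all terms. Concretely, I would write out $\dz(f\circ g)$, $\dz f \circ g$ and $f \circ \dz g$ using $\dz = \dz_0 + \dz_1$, and separately expand $f \cup g$ and $g \cup f$. Following Gerstenhaber's original mechanism, the inner-differential contributions $\dz_0$ cancel (they commute past insertions up to sign), and among the external-differential contributions $\dz_1$ all the \emph{interior} insertions — those in which the two arguments multiplied by $\dz_1$ lie strictly inside, or straddle, the block produced by $g$ — cancel in pairs, this being precisely the pre-Lie (right-symmetry) property of $\circ$. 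What survives are only the two \emph{extreme} contributions, in which the product glues the $g$-block to the very front or the very back of the remaining arguments; these reassemble exactly into $f \cup g$ and $(-1)^{|f||g|} g \cup f$, producing the left-hand side.

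The main obstacle is the sign bookkeeping. In the $K$-reduced differential graded setting the external differential $\dz_1$ carries signs that differ from those in \cite{Men09,Abb15}, as flagged in the Remark following the definition of $\dz$, so I cannot merely quote the classical identity; I must re-verify that the pre-Lie relation for $\circ$ and the interaction of $\circ$ with $\dz_0,\dz_1$ hold with these particular suspension-induced signs. Once the pre-Lie identity is confirmed, the pairwise cancellations are automatic and the residual boundary terms are forced to equal $f\cup g - (-1)^{|f||g|} g \cup f$, matching the right-hand side $(-1)^{|f|+1}\dz f \circ g + (-1)^{|f|}\dz(f\circ g) + f \circ \dz g$. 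As a sanity check, specializing to cocycles $\dz f = \dz g = 0$ collapses the right-hand side to $(-1)^{|f|}\dz(f\circ g)$, a coboundary, which recovers the graded commutativity of $\cup$ on $HH^\bullet(A)$ and confirms the intended consequence of the formula.
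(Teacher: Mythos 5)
Your proposal is correct and follows exactly the route the paper intends: the paper omits the proof of this lemma (its Remark explains that the $K$-reduced statements and proofs are the same as the classical ones), identifies $\cup$ with the convolution product on the twisting convolution dg $K$-ring $\Hom^{\pi}_{K^e}(BA,A)$ — which yields (1) and (2) structurally, as you argue — and leaves (3) to the classical Gerstenhaber expansion with the pre-Lie cancellation, which is precisely your plan. The only caveat is that your part (3) is a blueprint rather than a completed sign verification, but the mechanism you describe is the standard one and does go through in the $K$-reduced suspended setting.
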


By Lemma \ref{Lemma-Cup} (1), the cup product on $C^\bullet(A)$ can induce the cup product on $HH^\bullet(A)$.
It follows from Lemma \ref{Lemma-Cup} (2) and (3) that $(HH^\bullet(A),\cup)$ is a graded commutative associative algebra.

\bigskip

\noindent {\bf Gerstenhaber bracket.} The {\it Gerstenhaber bracket} $[f,g] \in C^{m+n-1}(A)$ is defined by
$$[f,g] := f \circ g - (-1)^{(|f|+1)(|g|+1)}g \circ f,$$
where the product $\circ$ is defined by
$$f \circ g := \sum_{i=1}^{m} (-1)^{\varepsilon_{i-1}(|g|+1)}f[a_1|\cdots|g[a_i|\cdots|a_{i+n-1}]|\cdots|a_{m+n-1}].$$

The Gerstenhaber bracket $[-,-]$ satisfies the following properties:

\begin{lemma} \label{Lemma-GerBra} For all $f \in C^m(A)$, $g \in C^n(A)$ and $h \in C^p(A)$, one has

{\rm (1)} $\dz[f,g] = [\dz f,g]+(-1)^{|f|+1}[f,\dz g]$;

{\rm (2)} $[f,g] = -(-1)^{(|f|+1)(|g|+1)}[g,f]$;

{\rm (3)} {\rm (Graded Jacobi identity):}
$$(-1)^{(|f|+1)(|h|+1)}[[f,g],h]+(-1)^{(|g|+1)(|f|+1)}[[g,h],f]+(-1)^{(|h|+1)(|g|+1)}[[h,f],g]=0.$$
\end{lemma}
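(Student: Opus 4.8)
The plan is to deduce all three identities from a single structural property of the composition product $\circ$, namely the graded right pre-Lie identity. For homogeneous $f\in C^m(A)$, $g\in C^n(A)$, $h\in C^p(A)$, I would first establish
$$(f\circ g)\circ h-f\circ(g\circ h)=(-1)^{(|g|+1)(|h|+1)}\left((f\circ h)\circ g-f\circ(h\circ g)\right).$$
The proof is a direct expansion of both sides using the defining formula for $\circ$. In $(f\circ g)\circ h$ one separates the terms in which $h$ is inserted into the already-inserted copy of $g$ from those in which $g$ and $h$ are inserted into two distinct slots of $f$; the former cancel against $f\circ(g\circ h)$, so the left-hand side reduces to the sum over pairs of disjoint insertions. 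The right-hand side yields the same disjoint-insertion terms with the roles of $g$ and $h$ exchanged, and the Koszul sign $(-1)^{(|g|+1)(|h|+1)}$ is exactly the sign produced by commuting the two insertions past each other. Matching these two double sums slot by slot is the core computation.

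Granting the pre-Lie identity, statement (2) is immediate from the definition $[f,g]=f\circ g-(-1)^{(|f|+1)(|g|+1)}g\circ f$, requiring only bilinearity of $\circ$. For the graded Jacobi identity (3), I would substitute this expression into the cyclic sum and expand into $\circ$-monomials. Each double bracket produces four $\circ$-terms; the nested terms (one cochain inserted into the outermost factor of another) cancel across the three summands by the pre-Lie identity applied in each cyclic order, while the remaining terms cancel in pairs using the antisymmetry from (2). This is the standard passage from a graded pre-Lie algebra to the Lie algebra on its commutator, carried out with the $(|\cdot|+1)$-shifted signs.

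For statement (1) I would treat the two pieces of $\delta=\delta_0+\delta_1$ separately. The inner differential $\delta_0$ is the differential of the $\Hom$-complex induced by $d_A$; since $\circ$ is built entirely from composition and insertion of chain maps, $\delta_0$ is automatically a graded derivation of $\circ$, hence of $[-,-]$, by the Koszul rule. For the external differential, I would identify $\delta_1$ with the adjoint action of the multiplication cochain $m\in C^2(A)$ determined by $\mu$: reading off the three families of terms in $\delta_1(f)$, the inner terms $f[\cdots|a_ia_{i+1}|\cdots]$ are precisely $f\circ m$ and the two outer-action terms assemble into $m\circ f$, so that $\delta_1=\pm[m,-]$, with $[m,m]=0$ encoding associativity of $A$. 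By the graded Jacobi identity (3), bracketing with the fixed element $m$ is a graded derivation of $[-,-]$, so $\delta_1$ is too; adding the two contributions gives the Leibniz rule (1), the sign $(-1)^{|f|+1}$ reflecting the parity of $\delta$.

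The main obstacle will be neither the conceptual architecture nor statements (2) and (3), but the sign bookkeeping in the pre-Lie identity and in the identification $\delta_1=\pm[m,-]$. Because the shift is hidden inside $\varepsilon_i=\sum_j(|a_j|+1)$ and inside the suspension maps relating $A$ and $s\overline{A}$, every insertion carries a sign depending jointly on the internal degrees of the arguments and on the weights of the cochains, and one must keep these two bookkeeping systems consistent throughout. Since the $K$-reduced bar resolution differs from the classical one only by replacing $\otimes_k$ with $\otimes_K$, I expect no new phenomenon beyond this sign verification, in agreement with the earlier remark that the proofs are the same as in the augmented $k$-algebra case.
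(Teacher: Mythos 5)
Your proposal is correct and is exactly the classical Gerstenhaber argument (pre-Lie identity for $\circ$, antisymmetry and Jacobi as formal consequences, $\delta_0$ a derivation of $\circ$ and $\delta_1=\pm[m,-]$ with $[m,m]=0$ encoding associativity), which is precisely what the paper relies on: it explicitly omits the proof, remarking that the $K$-reduced cochains change nothing in the arguments from the augmented $k$-algebra case. The only point worth making explicit in a write-up is that when $g[a_i|\cdots|a_{i+n-1}]$ has a component in $K$, the insertion is interpreted via the projection $A\to\overline{A}$, which is consistent because the reduced complex $\Hom_{K^e}((s\overline{A})^{\otimes\bullet},A)$ corresponds to normalized cochains, on which this component contributes nothing.
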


By Lemma \ref{Lemma-GerBra} (1), the Gerstenhaber bracket on $C^\bullet(A)$ can induce the Gerstenhaber bracket on $HH^\bullet(A)$.
It follows from Lemma \ref{Lemma-GerBra} (2) and (3) that $(HH^{\bullet+1}(A),[-,-])$ is a graded Lie algebra.

\bigskip

\noindent {\bf Graded Leibniz rule.} The cup product and the Gerstenhaber bracket are related by the graded Leibniz rule:

\begin{lemma} \label{Lemma-LeibnizRule}
For all $f \in C^m(A)$, $g \in C^n(A)$ and $h \in C^p(A)$, one has
$$[f,g \cup h] = [f,g] \cup h + (-1)^{(|f|+1)|g|}g \cup [f,h].$$
\end{lemma}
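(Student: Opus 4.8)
The plan is to read this as the Poisson (derivation) axiom which, together with Lemmas \ref{Lemma-Cup} and \ref{Lemma-GerBra}, promotes $(HH^\bullet(A),\cup,[-,-])$ to a Gerstenhaber algebra, and to establish it by a direct expansion in the Hochschild cochain complex $C^\bullet(A)=\prod_n\Hom_{K^e}((s\overline{A})^{\otimes n},A)$ using only the explicit formulas for $\cup$, for the brace product $\circ$, and for $[-,-]$. Concretely, I would write $[f,g\cup h]=f\circ(g\cup h)-(-1)^{(|f|+1)(|g|+|h|+1)}(g\cup h)\circ f$, using $|g\cup h|=|g|+|h|$, and expand the right-hand side by substituting $[f,g]=f\circ g-(-1)^{(|f|+1)(|g|+1)}g\circ f$ and the analogous formula for $[f,h]$. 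This produces four terms, naturally split into two families: those of the shape $(\,-\,)\circ f$ and those of the shape $f\circ(\,-\,)$.

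The first, routine, half concerns the $(\,-\,)\circ f$ family. Since $\circ$ feeds $f$ into exactly one of the $n+p$ argument-slots of $g\cup h$, and those slots are partitioned into the first $n$ (read off by $g$) and the last $p$ (read off by $h$), one checks directly that $(g\cup h)\circ f$ equals $(g\circ f)\cup h$ plus a Koszul-signed copy of $g\cup(h\circ f)$, the sign coming from commuting $f$ past the arguments consumed by $g$. This strict cochain identity is exactly what is needed to cancel the $(g\circ f)\cup h$ and $g\cup(h\circ f)$ contributions coming from the right-hand side against the $-(g\cup h)\circ f$ term on the left. After this cancellation the whole statement reduces to the single identity
$$f\circ(g\cup h)=(f\circ g)\cup h+(-1)^{(|f|+1)|g|}g\cup(f\circ h).$$

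The second half is the crux, and it is here that the main difficulty lies. The term $f\circ(g\cup h)$ feeds the \emph{entire} cochain $g\cup h$ into one slot of $f$, so it evaluates as $f[\cdots|\,g[\cdots]\,h[\cdots]\,|\cdots]$, with a genuine product $g[\cdots]h[\cdots]\in A$ occupying a single argument of $f$; this does not split term-by-term into $(f\circ g)\cup h$ and $g\cup(f\circ h)$, where the two values instead sit in separate slots. The discrepancy is not zero on the nose: already for $f$ of weight one it equals (up to sign) $\dz f$ evaluated on the pair $(g[\cdots],h[\cdots])$, so it lies in the image of the Hochschild differential, precisely as the failure of graded commutativity is corrected by $\dz$-exact terms in Lemma \ref{Lemma-Cup}(3). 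Consequently the identity holds in $HH^\bullet(A)$, where $\dz f=\dz g=\dz h=0$ and coboundaries are killed, which is all that the Gerstenhaber algebra structure requires. I expect the genuine labour to be the sign bookkeeping in the suspended, $K$-reduced convention—the signs $(-1)^{|g|\varepsilon_m}$ in $\cup$ and $(-1)^{\varepsilon_{i-1}(|g|+1)}$ in $\circ$, with $\varepsilon_i=\sum_{j\le i}(|a_j|+1)$—through which one must verify both that the first half cancels exactly and that the crux discrepancy assembles into a $\dz$-exact cochain.

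As a cross-check on the signs, I would also invoke the remark identifying the dg $K$-ring $A$ with the dg $k$-category $\mathcal{A}$ on the objects $e_1,\dots,e_t$: under this identification the $K$-reduced complex $C^\bullet(A)$ is the Hochschild cochain complex of $\mathcal{A}$, and $\cup$, $\circ$, $[-,-]$ coincide with the standard operations, so the Leibniz rule on $HH^\bullet(A)$ follows from the classical Gerstenhaber result transported to dg categories. The only point to confirm along that route is that the present formulas reproduce the categorical operations with matching signs, which again isolates the sign verification as the essential step.
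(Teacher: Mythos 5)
Your strategy is the standard Gerstenhaber argument, which is precisely what the paper relies on: it omits the proof of this lemma, stating that the calculus proofs are the same as in the classical (non-$K$-reduced) setting. Your decomposition is correct on both counts. The identity $(g\cup h)\circ f=(g\circ f)\cup h\pm g\cup(h\circ f)$ does hold strictly on cochains, since $f$ is inserted into a single argument slot and the slots of $g\cup h$ are partitioned between those read by $g$ and those read by $h$. And the companion identity $f\circ(g\cup h)=(f\circ g)\cup h+(-1)^{(|f|+1)|g|}g\cup(f\circ h)$ genuinely fails at the cochain level, so the Leibniz rule can only be asserted on $HH^\bullet(A)$ (or on cochains with explicit $\delta$-correction terms, in the style of Lemma~\ref{Lemma-Cup}(3)); you are right to read the lemma that way.

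The one real incompleteness is in the crux step. For $m\geq 2$ the discrepancy $f\circ(g\cup h)-(f\circ g)\cup h-(-1)^{(|f|+1)|g|}g\cup(f\circ h)$ is \emph{not} just $\delta f$ evaluated on a pair of values; to see that it is a coboundary on cocycles one must introduce the two-variable brace
$$f\{g,h\}[a_1|\cdots|a_{m+n+p-2}]:=\sum_{i<j}\pm\, f[a_1|\cdots|g[a_i|\cdots]|\cdots|h[a_j|\cdots]|\cdots]\in C^{m+n+p-2}(A)$$
and verify that the discrepancy equals $\pm\,\delta\bigl(f\{g,h\}\bigr)$ plus terms involving $\delta f$, $\delta g$, $\delta h$ fed through the braces. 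Your weight-one computation exhibits the mechanism correctly, but it does not by itself produce this homotopy in general, and the assertion that the discrepancy ``lies in the image of the Hochschild differential'' is only justified once $f\{g,h\}$ is constructed and the Koszul signs in the suspended, $K$-reduced convention are checked. Your fallback via the dg-category interpretation of $A$ is legitimate and is, in effect, the paper's own reason for omitting the proof; either route reduces to the same sign verification you identify.
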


\begin{definition}{\rm A {\it Gerstenhaber algebra} $A^\bullet = (A^\bullet, \cdot , [-,-])$ is
both a graded commutative associative algebra $(A^\bullet, \cdot)$ and a graded Lie algebra $(A^{\bullet+1},[-,-])$
such that the graded Leibniz rule holds, i.e.,
$$[a,b \cdot c] = [a,b] \cdot c + (-1)^{(|a|+1)|b|}b \cdot [a,c]$$
for all $a,b,c \in A^\bullet$.
}\end{definition}

\begin{theorem} {\rm (Gerstenhaber \cite{Ger63})} Let $A$ be an augmented dg $K$-ring. Then $(HH^{\bullet}(A), \cup, [-,-])$ is a Gerstenhaber algebra.
\end{theorem}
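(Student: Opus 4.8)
The plan is to assemble the theorem directly from the three preceding lemmas, verifying in turn that the cup product makes $HH^\bullet(A)$ a graded commutative associative algebra, that the Gerstenhaber bracket makes $HH^{\bullet+1}(A)$ a graded Lie algebra, and that the two operations satisfy the graded Leibniz rule, so that every clause of the definition of a Gerstenhaber algebra is met.

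First I would establish the commutative associative algebra structure. Lemma \ref{Lemma-Cup}(1) says that $\dz$ is a graded derivation for $\cup$, so $\cup$ passes to a well-defined product on cohomology: the cup product of two cocycles is again a cocycle, and changing either factor by a coboundary alters the product only by a coboundary. Associativity on $HH^\bullet(A)$ is immediate from Lemma \ref{Lemma-Cup}(2), which already holds at the cochain level. Graded commutativity is the one point requiring care: Lemma \ref{Lemma-Cup}(3) gives $f\cup g-(-1)^{|f||g|}g\cup f=(-1)^{|f|+1}\dz f\circ g+(-1)^{|f|}\dz(f\circ g)+f\circ\dz g$, and when $f$ and $g$ are cocycles the first and third terms on the right-hand side vanish, leaving the coboundary $(-1)^{|f|}\dz(f\circ g)$, so the two orders of the cup product coincide in $HH^\bullet(A)$.

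Next I would produce the graded Lie structure on $HH^{\bullet+1}(A)$. By Lemma \ref{Lemma-GerBra}(1) the bracket is compatible with $\dz$ in each variable, hence descends to cohomology; Lemma \ref{Lemma-GerBra}(2) and (3) then supply graded antisymmetry and the graded Jacobi identity there, which are exactly the axioms of a graded Lie algebra on the shifted grading. Finally, the compatibility of the two structures is furnished by Lemma \ref{Lemma-LeibnizRule}, which holds already on cochains and therefore on cohomology classes; this is precisely the graded Leibniz rule appearing in the definition. Collecting these facts verifies the definition and completes the proof.

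Since all the substantial computations are encapsulated in the lemmas, there is no serious obstacle here; the only genuinely substantive step is recognizing that the right-hand side of Lemma \ref{Lemma-Cup}(3) is a coboundary on cohomology, which is what upgrades the bare associativity of $\cup$ to graded commutativity on $HH^\bullet(A)$. One should also keep careful track of the degree-shift convention---the Lie bracket lives on $HH^{\bullet+1}(A)$---so that the signs in the Leibniz rule of Lemma \ref{Lemma-LeibnizRule} match those in the definition of a Gerstenhaber algebra.
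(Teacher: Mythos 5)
Your proposal is correct and follows exactly the route the paper takes: the paper states this theorem without a separate proof precisely because the surrounding text already records that Lemma \ref{Lemma-Cup}(1)--(3) give the graded commutative associative algebra structure, Lemma \ref{Lemma-GerBra}(1)--(3) give the graded Lie algebra structure on $HH^{\bullet+1}(A)$, and Lemma \ref{Lemma-LeibnizRule} supplies the compatibility. Your observation that the right-hand side of Lemma \ref{Lemma-Cup}(3) reduces to the coboundary $(-1)^{|f|}\dz(f\circ g)$ on cocycles is the correct and only nontrivial point.
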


\bigskip

\noindent {\bf Hochschild chain complex.} The {\it Hochschild chain complex} of $A$ is
$C_\bullet(A) := A\otimes^\pi_{K^e}BA \cong A \otimes_{A^e}(A \otimes_\pi BA \otimes_\pi A) \cong A \otimes^L_{A^e} A$.
Its homology is called the {\it Hochschild homology} of $A$, denoted by $HH_\bullet(A)$.
Note that the bullet $\bullet$ indicates weight in $C_\bullet(A)$ and degree in $HH_\bullet(A)$.
More precisely, $C_\bullet(A) = \bigoplus\limits_{n \in \mathbb{N}} C_n(A)$ with $C_n(A) := A \otimes (s\overline{A})^{\otimes n}$,
and the differential of $C_\bullet(A)$ is $b := b_0 + b_1$ where $b_0$ is the {\it inner differential} given by
$$b_0(a_0 \otimes [a_1|\cdots|a_n]) := d_Aa_0 \otimes [a_1|\cdots|a_n] + \sum_{i=1}^n(-1)^{\eta_{i-1}}a_0 \otimes [a_1|\cdots|d_Aa_j|\cdots|a_n],$$
and $b_1$ is the {\it external differential} given by:
$$\begin{array}{ll} b_1(a_0 \otimes [a_1|\cdots|a_n]) := & (-1)^{|a_0|+1}a_0a_1 \otimes [a_2|\cdots|a_n] \\
& \quad + \sum\limits_{i=1}^{n-1}(-1)^{\eta_i}a_0 \otimes [a_1|\cdots|a_ia_{i+1}|\cdots|a_n] \\
& \quad\quad + (-1)^{(\eta_{n-1}+1)(|a_n|+1)}a_na_0 \otimes [a_1|\cdots|a_{n-1}] \end{array}$$
where $\eta_i := \sum\limits_{j=0}^i(|a_j|+1)$.

\bigskip

\noindent {\bf Connes operator.} The {\it Connes operator} $B$ on $C_\bullet(A)$ is defined by
$$B(a_0 \otimes [a_1|\cdots|a_n]) := \sum_{i=0}^n (-1)^{\eta_i(\eta_n-\eta_i)} 1 \otimes [a_{i+1}|\cdots|a_n|a_0|\cdots|a_i].$$
It satisfies $B^2 =0$ and $Bb+bB=0$. Thus it can induce a differential of $HH_\bullet(A)$ of degree $1$.

\bigskip

\noindent {\bf Operator $i_f$.} For any $f \in C^m(A)$, the cap product \cite{CarEil56} induces an operator $i_f$ on $C_\bullet(A)$ given by
$$i_f(a_0 \otimes [a_1|\cdots|a_n]) := (-1)^{|a_0||f|}a_0f[a_1|\cdots|a_m] \otimes [a_{m+1}|\cdots|a_n].$$

The operator $i_f$ satisfies the following properties:

\begin{lemma} \label{Lemma-i_f} For all $f \in C^m(A)$ and $g \in C^n(A)$, one has

{\rm (1)} $[b,i_f] = i_{\dz f}$;

{\rm (2)} $i_{f \cup g} = (-1)^{|f||g|}i_g \circ i_f.$
\end{lemma}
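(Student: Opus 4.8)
The plan is to verify both identities by direct computation on a general chain $a_0\otimes[a_1|\cdots|a_p]$, carefully bookkeeping the Koszul signs; these are the $K$-reduced analogues of the classical Cartan-type formulas, and the arguments are formally the same as over $k$. I would dispose of (2) first, as it is purely multiplicative and involves no differential. Applying $i_f$ and then $i_g$ in succession, the intermediate coefficient $a_0\,f[a_1|\cdots|a_m]$ carries internal degree $|a_0|+|f|+\varepsilon_m$ with $\varepsilon_m=\sum_{j=1}^m(|a_j|+1)$; feeding this degree into the leading sign $(-1)^{|a_0||g|}$ of $i_g$ splits off exactly the factors $(-1)^{|f||g|}$ and $(-1)^{|g|\varepsilon_m}$. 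Matching the latter against the sign $(-1)^{|g|\varepsilon_m}$ in the definition of $f\cup g$, and comparing with the leading sign $(-1)^{|a_0|(|f|+|g|)}$ of $i_{f\cup g}$, yields $i_{f\cup g}=(-1)^{|f||g|}i_g\circ i_f$ once the common tail $[a_{m+n+1}|\cdots|a_p]$ is seen to agree.

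For (1) I would split both differentials into inner and external parts, $b=b_0+b_1$ and $\dz=\dz_0+\dz_1$, and prove $[b_0,i_f]=i_{\dz_0 f}$ and $[b_1,i_f]=i_{\dz_1 f}$ separately; adding them gives the claim, where $[b,i_f]:=b\circ i_f-(-1)^{|f|}i_f\circ b$ is the graded commutator of operators of degrees $-1$ and $|f|$. The inner identity is easier: both $b_0$ and $\dz_0$ are assembled from the internal differential $d_A$, so in $b_0\circ i_f-(-1)^{|f|}i_f\circ b_0$ every term in which $d_A$ strikes a tail entry $a_{m+1},\ldots,a_p$ or the outer coefficient $a_0$ appears in both composites and cancels, leaving only the terms in which $d_A$ passes through the slots consumed by $f$. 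These reassemble into $i_{\dz_0 f}$ once one recalls that $\dz_0 f=d_A\circ f-(-1)^{|f|}f\circ d_0$, with $d_0$ the inner differential of $BA$.

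The external identity $[b_1,i_f]=i_{\dz_1 f}$ is the combinatorial heart of the lemma. Here I would expand $b_1\circ i_f$ and $i_f\circ b_1$ in full: each produces a family of terms indexed by the position at which two adjacent entries are multiplied, together with the two cyclic boundary contractions $a_0a_1$ and $a_pa_0$ built into $b_1$. All multiplications occurring strictly inside the tail $[a_{m+1}|\cdots|a_p]$, and more generally every contraction that does not feed into $f$, occur with opposite signs in the two composites and cancel in pairs. The survivors are exactly the contractions among the first $m+1$ arguments or the one straddling the cut between the $f$-block and the tail; applying $i_f$ afterwards, these match term by term the three species in $\dz_1 f$, namely the left action $a_1f[a_2|\cdots]$, the internal contractions $f[\cdots|a_ia_{i+1}|\cdots]$, and the right action $f[\cdots]a_{n+1}$.

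I expect the sign verification in this final step to be the main obstacle. One must reconcile the two conventions $\varepsilon_i=\sum_{j=1}^i(|a_j|+1)$ and $\eta_i=\sum_{j=0}^i(|a_j|+1)=\varepsilon_i+(|a_0|+1)$ governing $\dz_1$ and $b_1$, keep track of the suspension signs concealed in the notation $[a_1|\cdots|a_p]$, and ensure that the two cyclic boundary terms of $b_1$ align with the left and right actions in $\dz_1 f$ rather than with an internal contraction. Once the terms are organized by the position of the multiplied pair relative to the $f$-block, the cancellations are forced and the surviving terms are rigid, so the remaining work is only to confirm that each surviving coefficient equals the corresponding coefficient of $i_{\dz_1 f}$.
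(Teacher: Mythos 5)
Your proposal is correct and is exactly the argument the paper has in mind: the paper explicitly omits the proof of this lemma, noting that the computations are the same as in the classical $k$-algebra calculus, and your direct verification (sign count for (2); splitting $b=b_0+b_1$, $\dz=\dz_0+\dz_1$ and matching $[b_0,i_f]=i_{\dz_0 f}$, $[b_1,i_f]=i_{\dz_1 f}$ for (1)) is that standard computation adapted to the $K$-reduced complex. One small clarification for the final step: the two genuinely cyclic terms, $a_p(a_0f[a_1|\cdots|a_m])$ from $b_1\circ i_f$ and $(a_pa_0)f[a_1|\cdots|a_m]$ from $i_f\circ b_1$, cancel against \emph{each other} by associativity, while the left and right actions in $\dz_1 f$ are matched by the leading terms $(a_0a_1)f[a_2|\cdots|a_{m+1}]$ of $i_f\circ b_1$ and $(a_0f[a_1|\cdots|a_m])a_{m+1}$ of $b_1\circ i_f$ respectively; with that bookkeeping your outline closes.
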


Lemma \ref{Lemma-i_f} implies that if $f$ is a Hochschild cocycle then the operator $i_f$ can induce an operator $i_f$ on $HH_\bullet(A)$.
Moreover, the map $i : HH^\bullet(A) \rightarrow \End(HH_\bullet(A)), \ f \mapsto i_f,$
is an (anti-)morphism of graded algebras, i.e., $HH_{\bullet}(A)$ is a graded (right/left) module over the graded commutative associative algebra $(HH^{\bullet}(A),\cup)$.

\bigskip

\noindent {\bf Operator $L_f$.} To $f \in C^m(A)$, we associate an operator $L_f$ on $C_\bullet(A)$ given by
$$\begin{array}{l}
L_f(a_0 \otimes [a_1|\cdots|a_n]):= \\
\quad \sum\limits_{i=1}^{n-m}(-1)^{(\eta_i+1)(|f|+1)} a_0 \otimes [a_1|\cdots|a_i|f[a_{i+1}|\cdots|a_{i+m}]|a_{i+m+1}]|\cdots|a_n]+ \\
\sum\limits_{i=n-m+1}^n(-1)^{(\eta_n-\eta_i)\eta_i+|f|+1} f[a_{i+1}|\cdots|a_n|a_0|\cdots|a_{m+i-n-1}] \otimes [a_{m+i-n}|\cdots|a_i].
\end{array}$$

The operator $L_f$ satisfies the following properties:

\begin{lemma} \label{Lemma-L_f} For all $f \in C^m(A)$ and $g \in C^n(A)$, one has

{\rm (1)} $[b, L_f] + L_{\dz f} = 0$;

{\rm (2)} $L_{[f,g]} = [L_f, L_g]$.
\end{lemma}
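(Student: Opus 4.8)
The plan is to verify both identities directly on the Hochschild chain complex $C_\bullet(A)$, exploiting the decompositions $b=b_0+b_1$ and $\dz=\dz_0+\dz_1$ into inner and external parts. First I record the degree bookkeeping: since $f\in C^m(A)$ re-suspends its output into a single bar entry, $L_f$ raises the total degree by $|f|+1$, whence $[b,L_f]$ and $L_{\dz f}$ both have degree $|f|$ (recall $|\dz f|=|f|-1$), so the identities are homogeneous and the graded commutators $[b,L_f]=b\circ L_f-(-1)^{|f|+1}L_f\circ b$ and $[L_f,L_g]=L_f\circ L_g-(-1)^{(|f|+1)(|g|+1)}L_g\circ L_f$ carry exactly the signs dictated by the Koszul rule. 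As noted earlier, everything here is the $K$-reduced analogue of the classical Tamarkin--Tsygan calculus: passing from $k$ to $K$ only deletes the degenerate summands (those with a bar entry in $K$) and leaves the sign combinatorics unchanged, so the computations are formally identical to the $k$-algebra case.

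For (1) I would prove the finer pair of identities $[b_0,L_f]=-L_{\dz_0 f}$ and $[b_1,L_f]=-L_{\dz_1 f}$ separately, which together give the claim. The inner identity is essentially the graded Leibniz rule for $d_A$: both $b_0$ and $\dz_0$ are built solely from $d_A$, and since $d_A$ is a derivation of $A$ it passes through the insertion of $f$, producing precisely the terms $f[\cdots|d_Aa_i|\cdots]$ of $\dz_0 f$ together with the outer $d_Af[\cdots]$ term, so this reduces to a sign check. The external identity is the combinatorial heart: expanding $b_1L_f$ and $L_fb_1$, the terms in which a face map $a_ja_{j+1}$ multiplies a pair of arguments lying entirely to the left or right of the inserted $f$ occur in both composites and cancel, while the surviving terms are exactly those in which a face map either multiplies two arguments of $f$, or multiplies $f[\cdots]$ against a neighbouring $a_j$, or is one of the two boundary products $a_0a_1$ and $a_na_0$ coming from the cyclic (second) sum of $L_f$; these reassemble, with the signs carried by $\eta_i$, into the three groups of terms constituting $L_{\dz_1 f}$.

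For (2) I would expand $L_f\circ L_g$ and sort the resulting insertions into \emph{disjoint} terms, where $f$ and $g$ are inserted at non-overlapping places, and \emph{nested} terms, where one operator is inserted into an argument produced by the other. The disjoint terms are symmetric under interchanging $(f,g)$ up to the sign $(-1)^{(|f|+1)(|g|+1)}$ and hence cancel in the graded commutator $[L_f,L_g]$, while the nested terms assemble into $L_{f\circ g}$; carrying out the same analysis for $L_g\circ L_f$ then yields $[L_f,L_g]=L_{f\circ g}-(-1)^{(|f|+1)(|g|+1)}L_{g\circ f}=L_{[f,g]}$ by the definition of the Gerstenhaber bracket. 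In both parts the main obstacle is the cyclic wrap-around term, the second sum in the definition of $L_f$: it is here that the rotations $[a_{i+1}|\cdots|a_n|a_0|\cdots]$ and the signs $(-1)^{(\eta_n-\eta_i)\eta_i}$ must be tracked with care, and one must check that the boundary products $a_na_0$ and $a_0a_1$ it generates match exactly the first and last terms of $\dz_1f$ in (1), and the wrap-around parts of $f\circ g$ and $g\circ f$ in (2). Once these cyclic contributions are matched the remaining cancellations are routine, and the already-established behaviour of the simpler operator $i_f$ in Lemma \ref{Lemma-i_f} serves as a useful consistency check on the signs.
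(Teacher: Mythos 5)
Your outline is correct, and it coincides with what the paper intends: the paper explicitly omits the proofs in the calculus section, stating that they are identical to the classical $k$-algebra computations once the $K$-reduced bar resolution is substituted, and your argument is exactly that classical direct verification (the weight decomposition forces the finer identities $[b_0,L_f]=-L_{\dz_0 f}$ and $[b_1,L_f]=-L_{\dz_1 f}$, and the disjoint/nested sorting of $L_f\circ L_g$ is the standard Gerstenhaber-type cancellation). No gap to report.
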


Lemma \ref{Lemma-L_f} implies that if $f$ is a Hochschild cocycle then the operator $L_f$ can induce an operator $L_f$ on $HH_\bullet(A)$.
Moreover, the map $L : HH^{\bullet+1}(A) \rightarrow \End(HH_\bullet(A)), \ f \mapsto L_f$,
is a morphism of graded Lie algebras, i.e., $HH_{\bullet}(A)$ is a graded left module over the graded Lie algebra $(HH^{\bullet+1}(A),[-,-])$.

\bigskip

\noindent {\bf Operator $S_f$.} To $f \in C^m(A)$, we associate an operator $S_f$ on $C_\bullet(A)$ given by
$$\begin{array}{l}
S_f(a_0 \otimes [a_1|\cdots|a_p]) := \\
\quad\quad \sum\limits_{i=m+1}^{p+1}\sum\limits_{j=1}^{i-m} (-1)^{\epsilon_{ij}} 1 \otimes [a_i|\cdots|a_p|a_0|\cdots|f[a_j|\cdots|a_{j+m-1}]|\cdots|a_{i-1}],
\end{array}$$
where $\epsilon_{ij} := (|f|+1)(\eta_p+\eta_{i-1}-\eta_{j-1}+1)+\eta_{i-1}(\eta_p-\eta_{i-1})$.

\bigskip

The operators $i_f, \ L_f$ and $S_f$ are related by the Cartan formula:

\begin{lemma} \label{Lemma-S_f} For all $f \in C^m(A)$, one has the {\rm Cartan formula}
$$L_f = [B,i_f]-[b,S_f]+S_{\delta f}.$$
More precisely, $L_f = [B,i_f]-[b_1,S_f]+S_{\delta_1 f}$ and $[b_0,S_f] = S_{\delta_0 f}$.
\end{lemma}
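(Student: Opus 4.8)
The plan is to regard the Cartan formula as an identity of operators on the Hochschild chain complex $C_\bullet(A)$ and to verify it by evaluating both sides on a generic homogeneous generator $a_0 \otimes [a_1|\cdots|a_p]$, matching the resulting terms one group at a time. The first point is that the two ``more precise'' identities already imply the displayed one: splitting the total differentials as $b = b_0 + b_1$ and $\delta = \delta_0 + \delta_1$ gives $[b, S_f] = [b_0, S_f] + [b_1, S_f]$ and $S_{\delta f} = S_{\delta_0 f} + S_{\delta_1 f}$, so once $[b_0, S_f] = S_{\delta_0 f}$ is known the two corresponding summands cancel and $L_f = [B, i_f] - [b, S_f] + S_{\delta f}$ collapses to $L_f = [B, i_f] - [b_1, S_f] + S_{\delta_1 f}$. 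Hence it suffices to establish the two precise identities separately, and I would present the inner one first, since it is a pure Leibniz computation, and then the external one, which carries the genuine combinatorial weight.

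For the inner identity $[b_0, S_f] = S_{\delta_0 f}$, recall that $b_0$ acts by applying the internal differential $d_A$ to each tensor slot of $a_0 \otimes [a_1|\cdots|a_p]$ with the Koszul signs recorded by the $\eta$-indices, while $\delta_0 f$ is obtained by letting $d_A$ act through the arguments of $f$ in precisely the same way. Writing out $b_0 S_f$ and $S_f b_0$ on the generator, every monomial in which $d_A$ strikes a slot \emph{outside} the inserted block $f[a_j|\cdots|a_{j+m-1}]$ occurs identically in both compositions and cancels in the graded commutator; the surviving monomials, where $d_A$ lands on one of the $m$ arguments of $f$ (or on the output of $f$, via the $d_A f[\cdots]$ term of $\delta_0 f$), reassemble exactly into $S_{\delta_0 f}$. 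The only thing to check is that the accumulated signs agree, which follows by tracking $d_A$ past each $a_i$ using the definitions of $\eta_i$ and of the permutation sign $\epsilon_{ij}$ in $S_f$; this uses nothing beyond $d_A$ being a graded derivation.

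For the external identity $L_f = [B, i_f] - [b_1, S_f] + S_{\delta_1 f}$ I would expand all four operators on the generator and sort the resulting monomials by their output shape—the degree-zero tensor factor being $1$, or $a_0$ with $f$ inserted into the bar word, or the product $a_0 f[\cdots]$, or the block $f[\cdots]$ standing alone—and by the cyclic position of $f$. The commutator $[B, i_f]$ splits into $B i_f$, whose outputs all carry leading factor $1$, and the $i_f B$ piece, whose outputs carry leading factor $f[\cdots]$ because $B$ first places $1$ in the degree-zero slot; the latter already matches the cyclic wrap-around sum of $L_f$ (its second sum) up to the two outer boundary contributions of $[b_1, S_f]$ arising from the $a_0a_1$ and $a_pa_0$ multiplications. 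The interior insertion sum of $L_f$ (its first sum) together with the remaining leading-factor-$1$ terms of $B i_f$ are then reconciled with $S_{\delta_1 f}$ by the interior boundary terms of $[b_1, S_f]$, a large number of which cancel in telescoping pairs; this telescoping is exactly the homotopy property that the double-sum definition of $S_f$ is engineered to produce, and isolating the cancelling pairs is the organizing principle of the calculation.

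The main obstacle is not conceptual but combinatorial: the double sum defining $S_f$ produces on the order of $p^2$ monomials, and the identity holds only after a precise matching of the Koszul signs carried by $B$, by $b_1$, and by the internal permutation signs $\eta_i(\eta_p - \eta_i)$ and $\epsilon_{ij}$. The single most delicate point is verifying that the cyclic sign of $B$ combines with the multiplication signs of $b_1$ so that the $i_f B$ contribution plus the two outer boundary terms of $[b_1, S_f]$ collapse onto the wrap-around sum of $L_f$; I would control this by fixing the output rotation and the location of $f$, comparing coefficients term by term, and sanity-checking against the degenerate cases $|f| = 0$, $p = m$, and $f$ of cochain weight one. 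Since all tensor products are over $K^e$ and only $K$-reduced chains occur, no new phenomenon beyond the classical augmented $k$-algebra computations of \cite{TamTsy05,TamTsy00,CunSkaTsy04,GelDalTsy89} enters, and the argument runs parallel to theirs with $K$-bimodule bookkeeping replacing $k$-vector space bookkeeping.
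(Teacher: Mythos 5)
Your proposal is correct in strategy and coincides with what the paper actually does: the paper omits the proof of this lemma entirely (see its remark that the calculus statements ``and their proofs are same'' as in the classical references \cite{TamTsy05,TamTsy00,CunSkaTsy04,GelDalTsy89}, only with the $K$-reduced bar resolution in place of the ordinary one), and your plan --- reduce the displayed formula to the two precise identities via linearity of $S$ in $f$ and of $[-,S_f]$ in the differential, dispose of $[b_0,S_f]=S_{\delta_0 f}$ by the Leibniz rule for $d_A$, and verify the external identity by expanding all operators on a generator and matching monomials by output shape --- is exactly that classical direct verification. The only caveat is that you have sketched rather than executed the sign bookkeeping, but that is no less than the paper itself provides.
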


\bigskip

\noindent {\bf Operator $T_{(f,g)}$.} To $f \in C^m(A)$ and $g \in C^n(A)$, we associate an operator $T_{(f,g)}$ on $C_\bullet(A)$ given by
$$\begin{array}{l} T_{(f,g)}(a_0 \otimes [a_1|\cdots|a_p]) := \sum\limits_{i=p-m+3}^{p+1}\sum_{j=1}\limits^{m+i-p-2}(-1)^{\eta_{i,j}} \\ f[a_i|\cdots|a_p|a_0|\cdots|g[a_j|\cdots|a_{j+n-1}]|\cdots|a_{n+m+i-p-3}] \otimes [a_{n+m+i-p-2}|\cdots|a_{i-1}]
\end{array}$$
where $\eta_{i,j} := (\eta_p-\eta_{i-1})\eta_{i-1}+(|g|+1)(\eta_p-\eta_{i-1}+\eta_{i-1}+1)$
and $a_0$ must be located between $f$ and $g$.

\begin{lemma} \label{Lemma-T_{(f,g)}} For all $f \in C^m(A)$ and $g \in C^n(A)$, one has

{\rm (1)} $[L_f,i_g] + (-1)^{|f|}i_{[f,g]} = -[b, T_{(f,g)}] + T_{(\delta f,g)} + (-1)^{|f|}T_{(f,\delta g)}$;

{\rm (2)} $[B,T_{(f,g)}] = [L_f,S_g] + (-1)^{|f|}S_{[f,g]}$;

{\rm (3)} $[b_0,T_{(f,g)}] = T_{(\dz_0f,g)} + (-1)^{|f|}T_{(f,\dz_0g)}$.
\end{lemma}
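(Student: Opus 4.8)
The plan is to establish all three identities by direct computation, as equalities of operators on the Hochschild chain complex $C_\bullet(A)$: it suffices to evaluate both sides on a generic homogeneous chain $a_0 \otimes [a_1|\cdots|a_p]$ and to match the resulting terms together with their Koszul signs. These relations are the $K$-reduced analogues of the classical noncommutative-calculus identities for augmented $k$-algebras in \cite{TamTsy05,TamTsy00,GelDalTsy89}; the algebraic content is unchanged, and only the sign combinatorics coming from the suspension $s\overline{A}$ and from the $K$-reduced two-sided bar resolution has to be re-examined. Throughout I would use the splittings $b = b_0 + b_1$ and $\delta = \delta_0 + \delta_1$ into inner and external parts, where $b_0,\delta_0$ act solely through the internal differential $d_A$ on each slot, while $b_1,\delta_1$ perform the face multiplications and, for $b_1$, the cyclic contraction of $a_0$.

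I would treat (3) first, both because it is the simplest and because it fixes the sign conventions for the other two. Since $b_0$ distributes $d_A$ over the tensor factors without disturbing the bracket and cyclic structure of $T_{(f,g)}$, expanding $[b_0, T_{(f,g)}]$ through the graded Leibniz rule splits the terms according to whether $d_A$ lands on an argument of $f$, on an argument of $g$, or on one of the remaining free slots. The free-slot contributions in $b_0 \circ T_{(f,g)}$ cancel against those in $T_{(f,g)} \circ b_0$, so they drop out of the commutator; the terms in which $d_A$ falls inside $f$ reassemble exactly into $T_{(\delta_0 f, g)}$, and those with $d_A$ inside $g$ into $(-1)^{|f|} T_{(f, \delta_0 g)}$, the sign being the Koszul sign for commuting $d_A$ past $f$. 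This proves (3).

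For (1) I would then expand the graded commutator $[L_f, i_g] = L_f \circ i_g - (-1)^{|L_f|\,|i_g|}\, i_g \circ L_f$ on $a_0 \otimes [a_1|\cdots|a_p]$ and classify every summand by the relative order along the chain of the block into which $f$ is inserted, the block into which $g$ is inserted, and the basepoint $a_0$. The configurations in which $f$ is applied to a block containing the whole image of $g$ (or vice versa) recombine into $(-1)^{|f|} i_{[f,g]}$, accounting for the correction term on the left-hand side; the surviving configurations are precisely those in which $a_0$ lies strictly between the two insertions, which are exactly the terms governed by $T_{(f,g)}$. Using (3) to cancel the inner part, it remains to check the external identity $[L_f,i_g] + (-1)^{|f|} i_{[f,g]} = -[b_1, T_{(f,g)}] + T_{(\delta_1 f,g)} + (-1)^{|f|} T_{(f,\delta_1 g)}$, where $[b_1, T_{(f,g)}]$ supplies the face-multiplication boundary terms and $T_{(\delta_1 f, g)}$, $T_{(f, \delta_1 g)}$ absorb the terms in which a face map falls inside $f$ or inside $g$. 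This term-by-term reconciliation, with its complete sign ledger, is the main obstacle: $L_f$ and $i_g$ each expand into $O(p)$ summands and their composite into $O(p^2)$, and every Koszul sign must be tracked across the cyclic reindexings forced by the last family of summands in $L_f$.

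Finally, (2) follows the same pattern with the Connes operator $B$ in place of $b$ and the homotopy $S_g$ in place of $i_g$. Since $B$ and $S_g$ both rotate the chain cyclically, after expanding $[B, T_{(f,g)}]$ I would normalize each summand by the cyclic permutation that returns $a_0$ to a standard position; the structural constraint that $a_0$ sit between $f$ and $g$ in $T_{(f,g)}$ is preserved under these rotations, and the two resulting families of terms match $[L_f, S_g]$ and $(-1)^{|f|} S_{[f,g]}$ respectively. No new idea beyond careful sign bookkeeping is needed, the verification resting on the explicit $K$-reduced formulas for $i_f$, $L_f$ and $S_f$ and on Lemmas \ref{Lemma-i_f}, \ref{Lemma-L_f} and \ref{Lemma-S_f} already established above.
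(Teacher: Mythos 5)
Your plan is correct and coincides with what the paper intends: the paper explicitly omits the proofs of the calculus lemmas (see the remark following the definition of the Hochschild cochain complex), deferring to the classical term-by-term verification for augmented $k$-algebras with only the $K$-reduced sign bookkeeping to recheck, which is exactly the computation you outline (splitting $b=b_0+b_1$, $\delta=\delta_0+\delta_1$, proving (3) first and using it to reduce (1) to its external part, and classifying summands by the position of $a_0$ relative to the $f$- and $g$-insertions). Like the paper, you stop short of executing the full sign ledger, but the strategy is the standard and correct one.
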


The operators $i_f$, $L_f$ and the Connes operator $B$ on $HH_\bullet(A)$ satisfy the following properties:

\begin{lemma} \label{Lemma-i_f-L_f} For all $f,g \in HH^\bullet(A)$, one has

{\rm (1)} $[i_f,i_g]=0$;

{\rm (2)} $i_{f \cup g} = i_f \circ i_g$;

{\rm (3)} $L_{[f,g]} = [L_f, L_g]$;

{\rm (4)} $L_f = [B,i_f]$;

{\rm (5)} $i_{[f,g]} = [i_f,L_g]$;

{\rm (6)} $L_{f \cup g} = L_f \circ i_g + (-1)^{|f|}i_f \circ L_g$.
\end{lemma}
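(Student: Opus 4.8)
The plan is to obtain each of the six identities on $HH_\bullet(A)$ from its chain-level predecessor in Lemmas~\ref{Lemma-i_f}--\ref{Lemma-T_{(f,g)}}, exploiting the two features that make a homotopy-level formula collapse once we descend to (co)homology. First, $f$ and $g$ now stand for Hochschild cohomology classes, so I may assume $\dz f=\dz g=0$; by Lemma~\ref{Lemma-Cup}(1) and Lemma~\ref{Lemma-GerBra}(1) the classes $f\cup g$ and $[f,g]$ are then cocycles as well, so the operators $i_{f\cup g}$, $L_{[f,g]}$, etc.\ are defined on homology. Second, any operator of the form $[b,X]$ acts as zero on $HH_\bullet(A)$, since on a $b$-cycle $z$ one has $[b,X]z=\pm\,b(Xz)$, a boundary. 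Before touching signs I would fix the operator degrees that control every Koszul sign below: inspecting the defining formulas shows that $i_f$ shifts the total degree of a Hochschild chain by $|f|$ and $L_f$ by $|f|+1$, while $B$ has degree $1$; hence $|i_f|=|f|$, $|L_f|=|f|+1$ and $|B|=1$ as operators on $HH_\bullet(A)$.

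The identities (1)--(3) are the purely multiplicative ones. For (1) I would combine $i_{f\cup g}=(-1)^{|f||g|}i_g\circ i_f$ from Lemma~\ref{Lemma-i_f}(2) with the graded commutativity $f\cup g=(-1)^{|f||g|}g\cup f$ of the cup product on $HH^\bullet(A)$; applying $i$ and comparing the two evaluations of $i_{f\cup g}$ gives $i_g\circ i_f=(-1)^{|f||g|}i_f\circ i_g$, i.e.\ $[i_f,i_g]=0$. Part (2) then drops out, since Lemma~\ref{Lemma-i_f}(2) together with (1) yields $i_{f\cup g}=(-1)^{|f||g|}i_g\circ i_f=i_f\circ i_g$. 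Part (3) requires no homotopy at all: the equality $L_{[f,g]}=[L_f,L_g]$ of Lemma~\ref{Lemma-L_f}(2) already holds on the chain level and descends verbatim.

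The identities (4)--(6) bring in the Connes operator. Part (4) is the Cartan formula $L_f=[B,i_f]-[b,S_f]+S_{\dz f}$ of Lemma~\ref{Lemma-S_f}: the term $S_{\dz f}$ vanishes because $\dz f=0$ and $[b,S_f]$ vanishes on homology, leaving $L_f=[B,i_f]$. For (6) I would use that $[B,-]$ is a graded derivation of the operator algebra, so that, invoking (4) for the cocycle $f\cup g$ and then (2),
$$L_{f\cup g}=[B,\,i_f\circ i_g]=[B,i_f]\circ i_g+(-1)^{|B||i_f|}\,i_f\circ[B,i_g]=L_f\circ i_g+(-1)^{|f|}\,i_f\circ L_g,$$
where the last equality uses $|B|=1$, $|i_f|=|f|$ and (4) again. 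Finally, (5) comes from Lemma~\ref{Lemma-T_{(f,g)}}(1): after setting $\dz f=\dz g=0$ and discarding $[b,T_{(f,g)}]$ on homology it reduces to $[L_f,i_g]+(-1)^{|f|}i_{[f,g]}=0$, that is $i_{[f,g]}=(-1)^{|f|+1}[L_f,i_g]$; to recast the right-hand side as $[i_f,L_g]$ I would apply the same relation with $f$ and $g$ interchanged and reconcile the two using the antisymmetry $[f,g]=-(-1)^{(|f|+1)(|g|+1)}[g,f]$ of Lemma~\ref{Lemma-GerBra}(2) together with the graded antisymmetry of the operator commutator.

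The genuine content lives in the chain-level Lemmas~\ref{Lemma-i_f}--\ref{Lemma-T_{(f,g)}}; what remains here is sign bookkeeping, and the step I expect to be most delicate is (5), where three independent sign sources --- the operator degrees $|L_f|=|f|+1$ and $|i_g|=|g|$, the antisymmetry of the Gerstenhaber bracket, and the antisymmetry of the graded commutator --- must be balanced to land exactly on $[i_f,L_g]$ rather than on a sign-shifted variant. As an independent check I would re-derive (5) from (1) and (4) through the graded Jacobi identity, writing $[i_f,L_g]=[i_f,[B,i_g]]=[[i_f,B],i_g]\pm[B,[i_f,i_g]]$ and killing the second bracket by (1); agreement of the two routes would confirm the signs.
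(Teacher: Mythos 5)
Your proposal is correct and follows essentially the same route as the paper: items (1)--(5) are obtained by descending the chain-level identities of Lemmas~\ref{Lemma-i_f}--\ref{Lemma-T_{(f,g)}} to homology (where $\dz f=\dz g=0$ and commutators with $b$ vanish), and (6) is deduced from (2) and (4) via the graded derivation property of $[B,-]$, which is exactly the computation the paper writes out. The extra sign bookkeeping you supply for (1) and (5), and the Jacobi-identity cross-check, are consistent with and merely flesh out the paper's terse citations.
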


\begin{proof}
(1) and (2) follow from Lemma \ref{Lemma-i_f}.

(3) is obtained from Lemma \ref{Lemma-L_f}.

(4) follows from Lemma \ref{Lemma-S_f}.

(5) is obained from Lemma \ref{Lemma-T_{(f,g)}}.

(6) By (4) and (2), we have
$L_f \circ i_g + (-1)^{|f|}i_f \circ L_g = [B,i_f] \circ i_g + (-1)^{|f|}i_f \circ [B,i_g]
= (B \circ i_f -(-1)^{|f|}i_f \circ B) \circ i_g + (-1)^{|f|}i_f \circ (B \circ i_g -(-1)^{|g|}i_g \circ B )
= [B, i_{f \cup g}] = L_{f \cup g}$.
\end{proof}

\begin{definition} {\rm (Tamarkin-Tsygan \cite{TamTsy00,TamTsy05})
A {\it calculus} is a triple $(G^\bullet, M^\bullet,d)$ where

(1) $G^\bullet = (G^\bullet, \cdot , [-,-])$ is a Gerstenhaber algebra;

(2) $M^{-\bullet}$ is a graded module over the graded commutative associative algebra $G^\bullet$,
and the action of $a \in G^\bullet$ on $M^{-\bullet}$ is denoted by $i_a$;

(3) $M^{-\bullet}$ is a graded left module over the graded Lie algebra $G^{\bullet+1}$,
and the action of $a \in G^{\bullet+1}$ on $M^{-\bullet}$ is denoted by $L_a$;

(4) $d$ is a differential on $M^{\bullet}$ of degree 1;

such that $$i_{[a,b]}=[i_a, L_b], \ \ \ L_{a \cdot b} = L_a \circ i_b + (-1)^{|a|}i_a \circ L_b , \ \ \ L_a = [d,i_a].$$
}\end{definition}

By Lemma \ref{Lemma-i_f-L_f}, we have the following result:

\begin{theorem} Let $A$ be an augmented dg $K$-ring. Then $(HH^\bullet(A), HH_\bullet(A), B)$ is a calculus. \end{theorem}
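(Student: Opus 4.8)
The plan is to verify directly that the triple $(HH^\bullet(A), HH_\bullet(A), B)$ satisfies each clause in the definition of a calculus, drawing on the structural results already assembled in this subsection. Since the substantive content—namely the chain-level identities among $i_f$, $L_f$, $S_f$, $T_{(f,g)}$ and $B$—has been packaged into the preceding lemmas, the argument reduces to matching those lemmas against the four structural requirements and the three compatibility equations.

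First I would dispose of the four structural requirements. Clause (1), that $(HH^\bullet(A), \cup, [-,-])$ is a Gerstenhaber algebra, is exactly the theorem of Gerstenhaber stated above. For clause (2), Lemma \ref{Lemma-i_f} shows that a Hochschild cocycle $f$ induces a well-defined operator $i_f$ on $HH_\bullet(A)$, and Lemma \ref{Lemma-i_f-L_f}(2) gives $i_{f \cup g} = i_f \circ i_g$ on homology; together these make $HH_\bullet(A)$ a graded module over the graded commutative algebra $(HH^\bullet(A), \cup)$. Clause (3) follows analogously from Lemma \ref{Lemma-L_f}, which guarantees that $L_f$ descends to $HH_\bullet(A)$, combined with the Lie-algebra identity $L_{[f,g]} = [L_f, L_g]$ of Lemma \ref{Lemma-i_f-L_f}(3), so that $HH_\bullet(A)$ becomes a graded left module over the graded Lie algebra $HH^{\bullet+1}(A)$. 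Clause (4) holds because $B$ satisfies $B^2 = 0$ and $Bb + bB = 0$; anticommuting with $b$, it induces a square-zero operator of degree $1$ on $HH_\bullet(A)$.

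It then remains to check the three compatibility identities $i_{[a,b]} = [i_a, L_b]$, $L_{a \cdot b} = L_a \circ i_b + (-1)^{|a|} i_a \circ L_b$, and $L_a = [d, i_a]$ with $d = B$. These are precisely parts (5), (6), and (4) of Lemma \ref{Lemma-i_f-L_f}, respectively, so the verification is a direct citation once the dictionary between $(a, b, d)$ and $(f, g, B)$ is fixed. The one place where I would slow down—and the only genuine subtlety—is confirming that the signs in Lemma \ref{Lemma-i_f-L_f} are normalized exactly as demanded by the definition of a calculus, since all the operators here carry suspension signs inherited from the $K$-reduced bar complex; this amounts to tracking those signs carefully through the passage to homology. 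Once the signs are seen to agree, the theorem follows immediately by invoking the definition.
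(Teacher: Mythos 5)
Your proposal is correct and follows exactly the route the paper takes: the paper derives the theorem directly from Lemma \ref{Lemma-i_f-L_f} (whose parts (2), (3), (4), (5), (6) supply the module structures and the three calculus identities), together with Gerstenhaber's theorem and the stated properties $B^2=0$, $Bb+bB=0$ of the Connes operator. Your version merely spells out the clause-by-clause matching that the paper leaves implicit.
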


We will see that it is exactly the calculi that determine the BV algebra structures
on the Hochschild cohomologies of $d$-CY dg algebras and $d$-symmetric dg $K$-rings.

\subsection{Comparison}

Assume that $A$ is a locally finite, bounded above or below, complete dg $K$-ring,
which ensures that $A^\vee$ is a locally finite cocomplete dg $K$-coring.
In this part, we will study the relations between the Hochschild (co)homologies of $A$ and its Koszul dual.
Here, the {\it Koszul dual} of a locally finite, bounded above or below, augmented dg $K$-ring $A$ is defined to be $\Omega A^\vee$ (See \cite{VdB15,Kel03}).
In some literatures, it is defined to be the augmented dg $K$-ring $(BA)^\vee =\Hom_k(BA,k)$ (See \cite{Kel94}),
whose product is $\mu : (BA)^\vee \otimes_K (BA)^\vee \rightarrow (BA \otimes_KBA)^\vee \xrightarrow{\Delta^\vee} (BA)^\vee$,
whose unit is $\varepsilon^\vee$, whose differential is $-d^\vee$, and whose augmentation is $\eta^\vee$,
where $BA=(BA, \Delta, \varepsilon, d, \eta)$ is the bar construction of $A$.
In practice, $\Omega A^\vee$ is more convenient to handle than $(BA)^\vee$.
Later on, we will see that if $A$ is a typical dg $K$-ring then $(BA)^\vee \cong \Omega A^\vee$.

\bigskip

\noindent{\bf Typical dg $K$-rings.} A {\it typical dg $K$-ring} is a locally finite augmented dg $K$-ring $A$ which is either non-negative or
non-positive simply connected (i.e., $A_0=K$ and $A_{-1}=0$). The typicality of a dg $K$-ring $A$
ensures that both the bar construction $BA$ and $(BA)^\vee$ are locally finite.

Let $A$ be a typical dg $K$-ring. For any $n \in \mathbb{N}$, we define a dg $K$-bimodule morphism
$$\psi_n : (s^{-1}\overline{A^\vee})^{\otimes n} \rightarrow ((s\overline{A})^{\otimes n})^\vee, \
\langle f_1|\cdots|f_n \rangle \mapsto \psi_n\langle f_1|\cdots|f_n \rangle$$
where $\psi_n\langle f_1|\cdots|f_n \rangle \in ((s\overline{A})^{\otimes n})^\vee$ is given by
$$(\psi_n\langle f_1|\cdots|f_n \rangle)[a_1|\cdots|a_n] := (-1)^{\varepsilon_n}f_n(a_1)f_{n-1}(a_2)\cdots f_1(a_n)$$
with $\varepsilon_n := \sum\limits_{i=1}^n(|f_i|+1)$. Since $BA$ is locally finite,
$(s^{-1}\overline{A^\vee})^{\otimes n} \cong ((s\overline{A})^{\otimes n})^\vee$ as dg $K$-bimodules. Thus
$$\psi := \bigoplus\limits_{n \in \mathbb{N}} \psi_n : \ \Omega A^\vee \rightarrow (BA)^\vee$$
is a dg $K$-bimodule isomorphism.

In fact, it is routine to check the following result:

\begin{proposition} \label{Proposition-KoszulDuals}
Let $A$ be a typical dg $K$-ring. Then $\psi : \Omega A^\vee \rightarrow (BA)^\vee$ is an isomorphism of dg $K$-rings.
\end{proposition}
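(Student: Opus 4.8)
The plan is to realize $\psi$ as the morphism of augmented dg $K$-rings induced by a twisting morphism through the cobar--bar adjunction of Theorem~\ref{Theorem-Cobar-Bar}, which makes compatibility with products, units and differentials automatic, and then to match this induced morphism with the explicit formula defining $\psi$. Since in this subsection $A$ is assumed locally finite, bounded above or below, and complete, $A^\vee$ is a locally finite cocomplete dg $K$-coring, and typicality guarantees that $BA$ and hence $(BA)^\vee$ are locally finite. Recall from the discussion of one-sided (co)bar resolutions that $\pi : BA \to A$ is the universal twisting morphism in $\Tw(BA,A)$. Applying Lemma~\ref{Lemma-Hom-Tensor-k-dual} to $\alpha=\pi$ then yields $-\pi^\vee \in \Tw(A^\vee,(BA)^\vee)$, a twisting morphism from the cocomplete coring $A^\vee$ to the ring $(BA)^\vee$.

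First I would feed $-\pi^\vee$ into Theorem~\ref{Theorem-Cobar-Bar}. Applied to the coring $A^\vee$ and the ring $(BA)^\vee$, it gives the natural bijection $\Hom_{\mathrm{ADGR}}(\Omega A^\vee,(BA)^\vee)\cong \Tw(A^\vee,(BA)^\vee)$, so $-\pi^\vee$ corresponds to a unique morphism of augmented dg $K$-rings $\Phi:\Omega A^\vee\to (BA)^\vee$. Being a morphism in $\mathrm{ADGR}$, $\Phi$ is by construction compatible with the products, the units and the differentials; in particular, once the identification $\Phi=\psi$ is established, nothing further needs to be said about matching the cobar differential $d_0+d_1$ of $\Omega A^\vee$ with the differential $-d_{BA}^\vee$ of $(BA)^\vee$.

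It then remains to prove $\Phi=\psi$ and that $\psi$ is bijective. By the explicit description in the proof of Theorem~\ref{Theorem-Cobar-Bar}, $\Phi|_K=\id_K$ and $\Phi\langle f_1|\cdots|f_n\rangle = \widetilde{(-\pi^\vee)}\langle f_1\rangle\cdots \widetilde{(-\pi^\vee)}\langle f_n\rangle$, the product being taken in $(BA)^\vee$, where $\widetilde{(-\pi^\vee)}\langle f\rangle = -f\circ\pi$ is supported on $s\overline{A}$ and sends $[a]$ to $-f(a)$. The product on $(BA)^\vee$ is $\Delta^\vee\circ\omega_{BA,BA}$, and the defining rule $\omega_{U,V}(f\otimes g)(u\otimes v)=g(u)f(v)$ reverses the two arguments; iterating this against the deconcatenation coproduct of $BA$ forces $\widetilde{(-\pi^\vee)}\langle f_n\rangle$ to be evaluated on the first slot and $\widetilde{(-\pi^\vee)}\langle f_1\rangle$ on the last. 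Hence $\Phi\langle f_1|\cdots|f_n\rangle$ is, up to a sign, the functional $[a_1|\cdots|a_n]\mapsto f_n(a_1)f_{n-1}(a_2)\cdots f_1(a_n)$, which is exactly the reversed-order form appearing in the definition of $\psi_n$. The remaining task is to confirm that the accumulated Koszul signs collapse to the single factor $(-1)^{\varepsilon_n}$ with $\varepsilon_n=\sum_{i=1}^n(|f_i|+1)$; degreewise, the $(-1)^n$ coming from the $n$ factors $-\pi^\vee$ accounts for the $+n$ in $\varepsilon_n$, while the reversals in $\omega_{BA,BA}$ and the (de)suspensions must reproduce $(-1)^{\sum_i|f_i|}$.

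Finally, typicality makes each $(s\overline{A})^{\otimes n}$ locally finite and bounded, so each $\psi_n:(s^{-1}\overline{A^\vee})^{\otimes n}\to ((s\overline{A})^{\otimes n})^\vee$ is a bijection and, the grading being bounded, $\psi=\bigoplus_n\psi_n=\Phi$ is bijective; therefore $\psi$ is an isomorphism of dg $K$-rings. I expect the only genuine difficulty to be the sign bookkeeping in the identification $\Phi=\psi$: three independent sources of Koszul signs --- the duality functor together with the reversing maps $\omega_{BA,BA}$, the suspensions $s$ and $s^{-1}$, and the factor $(-1)^n$ from the $n$ copies of $-\pi^\vee$ --- must be reconciled with the single sign $(-1)^{\varepsilon_n}$ built into $\psi_n$. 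This is precisely the computation the statement calls routine, and a careful degree-by-degree comparison on the generators $s^{-1}\overline{A^\vee}$, extended multiplicatively, settles it.
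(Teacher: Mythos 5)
Your proposal is correct, but it follows a genuinely different route from the paper, which offers no argument at all beyond ``it is routine to check'' --- i.e.\ a direct verification that the explicitly defined $\psi$ intertwines the concatenation product with $\Delta^\vee\circ\omega_{BA,BA}$ and the cobar differential $d_0+d_1$ with $-d_{BA}^\vee$. You instead manufacture the ring morphism abstractly: $-\pi^\vee\in\Tw(A^\vee,(BA)^\vee)$ by Lemma \ref{Lemma-Hom-Tensor-k-dual}, the adjunction of Theorem \ref{Theorem-Cobar-Bar} turns it into a morphism $\Phi\in\Hom_{\rm ADGR}(\Omega A^\vee,(BA)^\vee)$, and the only computation left is the identification $\Phi=\psi$ on generators plus bijectivity from local finiteness. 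This buys you the compatibility with product, unit and \emph{both} pieces of the differential for free (the check $d_A\circ\Phi=\Phi\circ(d_0+d_1)\Leftrightarrow$ Maurer--Cartan is already done in the proof of Theorem \ref{Theorem-Cobar-Bar}), at the cost of a sign comparison that you correctly localize but do not carry out --- which is fair, since that is exactly the ``routine'' computation the paper also omits, and your reading is corroborated by the paper's later remark that $\psi\circ\iota=-\pi^\vee$. Two caveats worth recording: (i) invoking Theorem \ref{Theorem-Cobar-Bar} with $C=A^\vee$ requires $A^\vee$ to be cocomplete, i.e.\ $A$ complete; this is the standing hypothesis of the subsection (and is anyway needed for $\Omega A^\vee$ to be defined in the paper's conventions), but it is not literally in the statement of the proposition, whereas the paper's direct check would not use it; (ii) for bijectivity you should note, as the paper does implicitly, that typicality forces $(s\overline{A})^{\otimes n}$ to be concentrated in degrees of absolute value at least $n$, so that the graded dual of $\bigoplus_n(s\overline{A})^{\otimes n}$ really is $\bigoplus_n((s\overline{A})^{\otimes n})^\vee$ rather than a product.
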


\bigskip

{\bf From now on, we often denote $\Omega A^\vee$ by $\Omega$ for simplicity.}

\bigskip

\noindent{\bf A semi-projective resolution of $\Omega A^\vee$.}
Recall that the bar construction of the augmented dg $K$-ring $\Omega A^\vee$ is the coaugmented dg $K$-coring $B\Omega A^\vee = T^c(s\overline{\Omega A^\vee})$,
and the map $\pi : B \Omega A^\vee \twoheadrightarrow s\overline{\Omega A^\vee} \xrightarrow{s^{-1}} \overline{\Omega A^\vee}
\hookrightarrow \Omega A^\vee$ is the universal twisting morphism.
By Theorem \ref{Theorem-Two-sidedBarRes}, the two-sided bar resolution $\Omega \otimes_\pi B \Omega A^\vee \otimes_\pi \Omega$ of $\Omega$
is a semi-projective resolution of the dg $\Omega$-bimodule $\Omega$.
Due to the assumption at the beginning of this part, the dg $K$-coring $A^\vee$ is cocomplete.
On the other hand, by Theorem \ref{Theorem-Cobar-Bar-Qis}, we have a quasi-isomorphism of dg $K$-corings
$$\nu: A^\vee \rightarrow B \Omega A^\vee, \ x \mapsto \sum\limits_{i=0}^{\infty}[\overline{\Delta}^i(x)]$$
where $\overline{\Delta}^i(x) = x_i^1\otimes\cdots\otimes x_i^{i+1}$
and $[\overline{\Delta}^i(x)] := [\langle x_i^1 \rangle |\cdots|\langle x_i^{i+1} \rangle]$ for all $x \in \overline{A^\vee}=\overline{A}^\vee$,
and $\nu|_K=\id_K$.
The composition
$$\iota := \pi \circ \nu \ : \ A^\vee \rightarrow B \Omega A^\vee \rightarrow \Omega A^\vee$$
maps $x$ to $\langle x \rangle \in \Omega A^\vee$ if $x \in \overline{A^\vee}$, and to 0 if $x \in K$.
Obviously, $\iota$ is a twisting morphism,
and the twisted tensor product $\Omega \otimes_\iota A^\vee \otimes_\iota \Omega$ is a dg $\Omega$-bimodule.

\begin{proposition} \label{Proposition-Semi-proj-Res-Omega}
Let $A$ be a locally finite, bounded above or below, complete dg $K$-ring.
Then the dg $\Omega$-bimodule $\Omega \otimes_{\iota} A^\vee \otimes_{\iota} \Omega$ is a semi-projective resolution of the dg $\Omega$-bimodule $\Omega$.
\end{proposition}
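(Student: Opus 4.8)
The plan is to prove the two assertions hidden in ``semi-projective resolution'' separately. Semi-projectivity is immediate from Lemma \ref{Lemma-Semiproj}: apply it with the augmented dg $K$-ring $\Omega = \Omega A^\vee$, the cocomplete dg $K$-coring $C = A^\vee$, the twisting morphism $\iota \in \Tw(A^\vee,\Omega)$, and the dg $C$-bicomodule $N = A^\vee$ regarded as a bicomodule over itself. The only hypothesis to verify is that $A^\vee$ is cocomplete, and this is precisely the recorded fact that a locally finite, bounded above or below, complete augmented dg $K$-ring has cocomplete graded dual. Thus $\Omega\otimes_\iota A^\vee\otimes_\iota\Omega$ is semi-projective, and this half is painless; the content lies in the quasi-isomorphism.

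For the quasi-isomorphism I would compare with the two-sided bar resolution of Theorem \ref{Theorem-Two-sidedBarRes}. By Theorem \ref{Theorem-Cobar-Bar-Qis}, $\nu : A^\vee \to B\Omega A^\vee$ is a quasi-isomorphism of dg $K$-corings, and $\iota = \pi \circ \nu$ by construction. Invoking $\iota = \pi\circ\nu$ together with the evident two-sided analogue of Lemma \ref{Lemma-Hom-Tensor-Twisting}(3)--(4) --- the middle factor is transported by the coring morphism $\nu$, while the outer twisting differentials on the two sides match exactly because $\pi\circ\nu=\iota$ --- the map
$$\id_\Omega \otimes \nu \otimes \id_\Omega : \Omega \otimes_\iota A^\vee \otimes_\iota \Omega \longrightarrow \Omega \otimes_\pi B\Omega A^\vee \otimes_\pi \Omega$$
is a morphism of dg $\Omega$-bimodules. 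Since $\nu$ preserves counits, composing it with the quasi-isomorphism $\tilde\mu$ of Theorem \ref{Theorem-Two-sidedBarRes} recovers precisely the augmentation $\Omega\otimes_\iota A^\vee\otimes_\iota\Omega\to\Omega$ that we must show is a quasi-isomorphism. Hence it suffices to prove that $\id_\Omega\otimes\nu\otimes\id_\Omega$ is a quasi-isomorphism.

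For that I would filter both sides by the total tensor-length carried by the two outer $\Omega$-factors, taken as a decreasing filtration. Every component of the twisted differential either preserves this weight (the internal parts induced by $d_{A^\vee}$) or strictly raises it (the cobar parts $d_1$ of the two copies of $d_\Omega$, together with the twisting pieces $d_\iota^l, d_\iota^r, d_\pi^l, d_\pi^r$, all of which multiply a fresh length-one element into an outer factor). Consequently the filtration is respected, on the associated graded only the weight-preserving internal differential survives, and the two outer factors become free graded modules; the induced map on $E_0$ is $\id\otimes\nu\otimes\id$ with the middle factor retaining its full differential. As $k$ is a field, the K\"unneth theorem identifies the induced map on $E_1$ with $\id\otimes H(\nu)\otimes\id$, an isomorphism by Theorem \ref{Theorem-Cobar-Bar-Qis}; a comparison of spectral sequences then promotes this to the assertion that $\id_\Omega\otimes\nu\otimes\id_\Omega$, and therefore the augmentation, is a quasi-isomorphism.

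The step I expect to be the main obstacle is the convergence underlying this last comparison: the outer-weight filtration is exhaustive but infinite, so before an $E_1$-isomorphism can be upgraded one must check that in each total degree the filtration is bounded, or at least complete and Hausdorff. This is exactly where the hypotheses do real work --- local finiteness and boundedness control the weights occurring in a fixed degree, while completeness of $A$ (equivalently cocompleteness of $A^\vee$) ensures the relevant exhaustion terminates --- so the delicate part is bookkeeping rather than any new idea. Should the outer-weight filtration prove awkward to converge, a safe fallback is to filter the middle factor by its coradical degree, which is exhaustive by cocompleteness of $A^\vee$ and is strictly lowered by the twisting differentials; this converges more transparently, at the cost of comparing the two sides one coradical layer at a time instead of reducing at once to $H(\nu)$.
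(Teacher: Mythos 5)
Your first half is exactly the paper's: semi-projectivity follows from Lemma~\ref{Lemma-Semiproj} applied to $C=N=A^\vee$, using that completeness of $A$ (with local finiteness and boundedness) gives cocompleteness of $A^\vee$. Your second half also starts where the paper starts, namely with the comparison map $\tilde\nu=\id\otimes\nu\otimes\id$ into the two-sided bar resolution $\Omega\otimes_\pi B\Omega A^\vee\otimes_\pi\Omega$ of Theorem~\ref{Theorem-Two-sidedBarRes}. But there the routes diverge, and yours has a genuine gap precisely at the point you flag. The proposition assumes only that $A$ is locally finite, bounded above or below, and complete --- \emph{not} typical. Hence $s^{-1}\overline{A^\vee}$ may have components in degrees of both signs (e.g.\ $\overline{A}$ concentrated in degrees $0$ and $-2$ gives $s^{-1}\overline{A^\vee}$ components in degrees $-1$ and $+1$), so $(s^{-1}\overline{A^\vee})^{\otimes 2n}$ contributes to degree $0$ for every $n$ and the outer-weight filtration is unbounded in a fixed total degree. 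Moreover the underlying object is a direct \emph{sum} over weights, so this decreasing filtration is Hausdorff but not complete; neither the bounded nor the complete convergence theorem applies, and the $E_1$-isomorphism cannot be promoted. (Contrast this with the proof of Theorem~\ref{Theorem-Sym-Dual-CY}, where the paper runs exactly such a weight filtration and explicitly invokes typicality to bound it.) Your fallback does not close the gap either: the coradical filtration of the middle factor converges, but the induced map on associated graded middle factors sends $\gr_j^{\mathrm{corad}}A^\vee$ into $(s\overline{\Omega A^\vee})^{\otimes j}$ via the leading term $[\overline{\Delta}^{\,j-1}(-)]$, which lands only in the weight-one-per-slot part and is nowhere near a layerwise quasi-isomorphism (already for $j=1$ it is the inclusion $\overline{A^\vee}\hookrightarrow s\overline{\Omega A^\vee}$), so the $E_1$ comparison fails.

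The paper sidesteps all convergence questions by making $\tilde\nu$ an explicit homotopy equivalence: it writes down a dg $\Omega$-bimodule morphism $\tilde\rho$ in the opposite direction (splitting a bar word $[\langle v_1|\cdots|v_n\rangle]$ at each position) and an explicit degree-one homotopy $s$ built from the iterated reduced coproducts $\overline{\Delta}^{\,j}(v_i)$, whose sums are finite exactly because $A^\vee$ is cocomplete, and verifies $\tilde\rho\circ\tilde\nu=\id$ and $\tilde\nu\circ\tilde\rho-\id=d\circ s+s\circ d$. This is unconditional under the stated hypotheses. To repair your argument you would either need to add the typicality hypothesis (after which your primary filtration is bounded in each degree and your argument goes through, but proves less than the proposition claims), or replace the spectral sequence by such an explicit contraction.
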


\begin{proof}
By assumption, the dg $K$-coring $A^\vee$ is cocomplete.
It follows from Lemma \ref{Lemma-Semiproj} that $\Omega \otimes_\iota A^\vee \otimes_{\iota} \Omega$ is semi-projective.

Next, we show that $\Omega \otimes_{\iota} A^\vee \otimes_{\iota} \Omega$ is quasi-isomorphic to $\Omega$ as dg $\Omega$-bimodules.
The quasi-isomorphism $\nu: A^\vee \rightarrow B \Omega A^\vee$ induces a dg $\Omega$-bimodule morphism
$$\tilde{\nu} := \id \otimes \nu \otimes \id : \ \Omega \otimes_{\iota} A^\vee \otimes_{\iota} \Omega \rightarrow \Omega \otimes_{\pi} B \Omega A^\vee \otimes_{\pi} \Omega.$$
Define a dg $\Omega$-bimodule morphism
$$\tilde{\rho} : \Omega \otimes_{\pi} B \Omega A^\vee \otimes_{\pi} \Omega \rightarrow \Omega \otimes_{\iota} A^\vee \otimes_{\iota} \Omega$$ by
mapping $x=u \otimes [\langle v_1|\cdots|v_n \rangle] \otimes w$ to
$u \cdot (\sum\limits_{i=1}^{n} (-1)^{\varepsilon_{i-1}} \langle v_1|\cdots|v_{i-1} \rangle \otimes v_i \otimes \langle v_{i+1}|\cdots|v_n \rangle) \cdot w$
if $n \geq 1$, to $x$ if $n=0$, and mapping others to 0. Here $\varepsilon_{i-1} = \sum\limits_{j=1}^{i-1}(|v_j|+1)$.
Now it is enough to prove that the morphism $\tilde{\nu}$ is a homotopy equivalence with homotopy inverse $\tilde{\rho}$.
For this, let $s : \Omega \otimes_{\pi} B \Omega A^\vee \otimes_{\pi} \Omega \rightarrow \Omega \otimes_{\pi} B \Omega A^\vee \otimes_{\pi} \Omega$
be the graded $\Omega$-bimodule morphism of degree 1 defined by mapping
$a_0 \otimes [a_1|\cdots|a_{n-1}|\langle v_1|\cdots|v_m \rangle] \otimes a_n \in \Omega \otimes B \Omega A^\vee \otimes \Omega$ to
$$\sum_{i=2}^m \sum_{j=0}^\infty (-1)^{\zeta_{n-1,i-1}}a_0\otimes[a_1|\cdots|a_{n-1}|\langle v_1|\cdots|v_{i-1} \rangle |
\langle v_{ij}^1 \rangle |\cdots|\langle v_{ij}^{j+1} \rangle] \otimes \langle v_{i+1}|\cdots|v_m \rangle a_n$$
if $m \geq 2$, and to 0 otherwise,
where $\zeta_{n-1,i-1} = \sum\limits_{p=0}^{n-1}(|a_p|+1)+\sum\limits_{q=1}^{i-1}(|v_q|+1)$
and $\overline{\Delta}^j(v_i) = v_{ij}^1 \otimes \cdots \otimes v_{ij}^{j+1}$ (cf. \cite[3.2.6]{Her15}).
Then $\tilde{\rho} \circ \tilde{\nu} = \id$ and $\tilde{\nu} \circ \tilde{\rho} - \id = d \circ s + s \circ d$.
\end{proof}

\begin{remark}{\rm
The completeness of $A$ is crucial in the proof of Proposition \ref{Proposition-Semi-proj-Res-Omega}.
Let $A$ be a finite dimensional elementary algebra,
or equivalently, a bound quiver algebra $A=kQ/I$ with $Q$ a finite quiver and $I$ an admissible ideal of $kQ$.
Then $A$ is a complete $kQ_0$-ring with nilpotent augmentation ideal $(Q_1)$.
Certainly, $A$ is also an augmented algebra with the augmentation given by some projection
$A \twoheadrightarrow kQ_0 \twoheadrightarrow k$.
However, in this case, $A$ is not complete, or equivalently, the augmentation ideal is not nilpotent, in general.
This is why we consider dg $K$-(co)rings rather than dg algebras.
}\end{remark}

\bigskip

\noindent{\bf Homological smoothness of $\Omega A^\vee$.} A dg algebra $A$ is {\it homologically smooth} if $A \in \per A^e$ (See \cite{KonSoi09,Gin06}).
Here, $\per A^e$ is the perfect derived category of $A^e$,
i.e., the full triangulated subcategory of the derived category $\mathcal{D}A^e$ of the dg algebra $A^e$ consisting of all perfect (=compact) objects,
or equivalently, the smallest full triangulated subcategory of $\mathcal{D}A^e$ containing $A^e$ and closed under direct summands \cite{Kel94}.

The following result generalizes \cite[Proposition 5.6]{Lun10}.

\begin{theorem} \label{Theorem-Homologically-Smooth}
Let $A$ be a finite dimensional complete dg $K$-ring.
Then $\Omega A^\vee$ of $A$ is a homologically smooth dg algebra.
\end{theorem}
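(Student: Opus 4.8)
The plan is to use the explicit semi-projective resolution supplied by Proposition~\ref{Proposition-Semi-proj-Res-Omega} and to compute inside $\per\Omega^e = \thick_{\D(\Omega^e)}(\Omega^e)$. Since $\Omega \otimes_\iota A^\vee \otimes_\iota \Omega$ is a semi-projective resolution of the dg $\Omega$-bimodule $\Omega$, we have $\Omega \cong \Omega \otimes_\iota A^\vee \otimes_\iota \Omega$ in $\D(\Omega^e)$, so it suffices to prove that $P := \Omega \otimes_\iota A^\vee \otimes_\iota \Omega$ lies in the thick subcategory generated by $\Omega^e$. As $\per\Omega^e$ is triangulated and closed under direct summands, it is enough to exhibit $P$ as a finite iterated extension of objects each of which is a direct summand of a finite direct sum of shifts of $\Omega^e$.

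First I would produce such a finite filtration. Because $A$ is finite dimensional, $A^\vee$ is a finite dimensional dg $K$-bimodule, and completeness of $A$ forces $A^\vee$ to be cocomplete; hence the $A^\vee$-bicomodule filtration $0 = F_1 \subset F_2 \subset \cdots$ of $N = A^\vee$ constructed in the proof of Lemma~\ref{Lemma-Semiproj} is finite (the operators $\overline{\Delta^l}, \overline{\Delta^r}$ are nilpotent on the finite dimensional space $A^\vee$), and so is its refinement. Applying $\Omega \otimes_\iota (-) \otimes_\iota \Omega$ yields a finite exhaustive dg $\Omega$-bimodule filtration of $P$ whose successive subquotients are, as established in Lemma~\ref{Lemma-Semiproj}, of the form $\Omega \otimes W_i \otimes \Omega$ with differential $d_\Omega \otimes \id \otimes \id + \id \otimes \id \otimes d_\Omega$, where each $W_i$ is a finite dimensional graded $K$-bimodule carrying the zero differential. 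These subquotients give rise to finitely many triangles in $\D(\Omega^e)$, so by induction it remains only to show that each $\Omega \otimes W_i \otimes \Omega$ is perfect.

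This is where the hypothesis $K = k^t$ enters decisively. Since $K$ is a separable $k$-algebra, the enveloping algebra $K^e = K \otimes_k K$ is semisimple, so every $K$-bimodule is projective; in particular the finite dimensional graded $K$-bimodule $W_i$ is a direct summand of a finite direct sum of shifts of the free module $K^e$. Applying the functor $\Omega \otimes (-) \otimes \Omega$ and using $\Omega \otimes (K \otimes_k K) \otimes \Omega \cong \Omega \otimes_k \Omega \cong \Omega^e$ as dg $\Omega$-bimodules (the middle factor has zero differential and $K$ sits in degree $0$, so these identifications and the relevant splittings respect differentials), I find that $\Omega \otimes W_i \otimes \Omega$ is a direct summand of a finite direct sum of shifts of $\Omega^e$, hence lies in $\add$ of the shifts of $\Omega^e$ and a fortiori in $\per\Omega^e$. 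Feeding this back through the filtration triangles shows $P \in \per\Omega^e$, and therefore $\Omega \cong P$ is homologically smooth.

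I expect the separability step to be the main conceptual point: it is precisely the device that converts relative projectivity over $K$ into genuine perfectness over $\Omega^e = \Omega^{\op} \otimes_k \Omega$, and it is what replaces the trivial identity $\Omega \otimes_k \Omega = \Omega^e$ available in the classical case $K = k$ of \cite[Proposition 5.6]{Lun10}. The remaining points are routine: the finiteness of the filtration, the compatibility of the separability splitting with the differentials, and the fact that applying $\Omega \otimes (-) \otimes \Omega$ to a split monomorphism of graded $K$-bimodules again yields a split monomorphism of dg $\Omega$-bimodules.
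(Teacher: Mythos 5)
Your proposal is correct and follows essentially the same route as the paper: both use Proposition~\ref{Proposition-Semi-proj-Res-Omega} to replace $\Omega$ by $\Omega\otimes_\iota A^\vee\otimes_\iota\Omega$, observe that finite dimensionality of $A^\vee$ makes the filtration from Lemma~\ref{Lemma-Semiproj} finite, and conclude that each subquotient is a direct summand of a finite direct sum of shifts of $\Omega^e$. The only difference is that you spell out, via the separability of $K=k^t$ and the semisimplicity of $K^e$, why the subquotients $\Omega\otimes W_i\otimes\Omega$ are such summands — a detail the paper asserts without elaboration.
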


\begin{proof}
By Proposition \ref{Proposition-Semi-proj-Res-Omega},
$\Omega \otimes_{\iota} A^\vee \otimes_{\iota} \Omega$ is a semi-projective resolution of the dg $\Omega$-bimodule $\Omega$.
Since $A$ is finite dimensional, so is $A^\vee$.
Thus the filtration of $\Omega \otimes_\iota A^\vee \otimes_\iota \Omega$ given by the proof of Lemma \ref{Lemma-Semiproj} is finite.
Moreover, every dg $\Omega$-bimodule subfactor is a direct summand of a finite direct sum of shifts of $\Omega^e$, thus in $\per \Omega^e$.
Hence the dg $\Omega$-bimodule $\Omega \otimes_{\iota} A^\vee \otimes_{\iota} \Omega$ lies in $\per \Omega^e$,
and further, $\Omega$ lies in $\per \Omega^e$ as well.
\end{proof}

\bigskip

\noindent{\bf Comparison of Hochschild (co)homologies.} Now we compare
the Hochschild (co)homologies of a complete typical dg $K$-ring $A$ and its Koszul dual $\Omega = \Omega A^\vee$.

\begin{theorem} \label{Theorem-HHHH_Connes}
Let $A$ be a complete typical dg $K$-ring. Then

{\rm (1)} there is an isomorphism $h^\bullet : HH^\bullet(A) \rightarrow HH^\bullet(\Omega A^\vee)$ of graded commutative associative algebras;

{\rm (2)} there is an isomorphism
$h_\bullet : HH_{\bullet}(A)^\vee \rightarrow HH_{-\bullet}(\Omega A^\vee)$ of graded $k$-modules
such that the following diagram is commutative:
$$\xymatrix{ HH_{\bullet}(A)^\vee \ar[r]^-{B^\vee} \ar[d]^-{h_\bullet} & HH_{\bullet}(A)^\vee \ar[d]^-{h_\bullet}\\
HH_{-\bullet}(\Omega A^\vee) \ar[r]^-{B} & HH_{-\bullet}(\Omega A^\vee). }$$
\end{theorem}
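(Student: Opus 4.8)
The plan is to compute both Hochschild invariants of $\Omega := \Omega A^\vee$ from the small semi-projective resolution $\Omega \otimes_\iota A^\vee \otimes_\iota \Omega$ of the dg $\Omega$-bimodule $\Omega$ furnished by Proposition \ref{Proposition-Semi-proj-Res-Omega}, and then to match the resulting complexes with the graded $k$-duals of the Hochschild complexes of $A$ by means of Lemma \ref{Lemma-Hom-Tensor-k-dual} together with the Koszul-dual identification $\psi : \Omega A^\vee \xrightarrow{\cong} (BA)^\vee$ of Proposition \ref{Proposition-KoszulDuals}. Using the $\Hom$- and $\otimes$-identifications for two-sided twisted tensor products recorded at the end of the twisting-morphism subsection, the resolution yields dg $k$-module isomorphisms
\[
C^\bullet(\Omega) \cong \Hom_{K^e}^\iota(A^\vee,\Omega), \qquad C_\bullet(\Omega) \cong \Omega \otimes_{K^e}^\iota A^\vee .
\]
The whole argument rests on the observation that $\psi$ transports the universal twisting morphism $\iota : A^\vee \to \Omega$ to $-\pi^\vee : A^\vee \to (BA)^\vee$, which is a routine (if sign-sensitive) check comparing $\psi_1\langle x\rangle$ with $\pi^\vee(x)$ on $s\overline{A}$; I would dispatch this identification first.

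\textbf{Part (1).} Combining the first isomorphism above with the anti-isomorphism of dg $K$-rings $\phi : \Hom_{K^e}^\pi(BA,A) \to \Hom_{K^e}^{-\pi^\vee}(A^\vee,(BA)^\vee),\ f \mapsto f^\vee$ from Lemma \ref{Lemma-Hom-Tensor-k-dual}(1), and identifying $(BA)^\vee$ with $\Omega$ (and $-\pi^\vee$ with $\iota$) through $\psi$, I obtain an anti-isomorphism $C^\bullet(A) \to C^\bullet(\Omega)$ of dg $K$-rings. Because the cup product is the convolution product and $HH^\bullet$ is graded commutative, the relation $\phi(f \star g) = (-1)^{|f||g|}\phi(g)\star\phi(f)$ collapses on cohomology to genuine multiplicativity, so the induced map $h^\bullet : HH^\bullet(A) \to HH^\bullet(\Omega A^\vee)$ is an isomorphism of graded commutative associative algebras.

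\textbf{Part (2), the isomorphism.} Dually, I apply Lemma \ref{Lemma-Hom-Tensor-k-dual}(3) in the form $\omega_{A,BA} : (BA)^\vee \otimes_{K^e}^{-\pi^\vee} A^\vee \xrightarrow{\cong} (A \otimes_{K^e}^\pi BA)^\vee = C_\bullet(A)^\vee$. Transporting the left-hand side through $\psi$ identifies it with $\Omega \otimes_{K^e}^\iota A^\vee \cong C_\bullet(\Omega)$, giving a chain isomorphism $C_\bullet(\Omega) \cong C_\bullet(A)^\vee$. Since typicality makes $BA$, hence $C_\bullet(A)$, degreewise finite dimensional, dualization is exact and commutes with taking homology; passing to homology (with the degree reversal built into $(-)^\vee$) produces the isomorphism $h_\bullet : HH_\bullet(A)^\vee \to HH_{-\bullet}(\Omega A^\vee)$ of graded $k$-modules.

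\textbf{Part (2), the Connes compatibility --- the main obstacle.} None of the structure used so far involves the cyclic Connes operator, so its compatibility must be checked by hand at the chain level: I must show that the chain isomorphism $C_\bullet(\Omega) \cong C_\bullet(A)^\vee$ intertwines the Connes operator $B$ on $C_\bullet(\Omega)$ with the transpose $B^\vee$ of the Connes operator on $C_\bullet(A)$, i.e. that the pairing induced by $\omega_{A,BA}\circ(\psi \otimes \id)$ satisfies $\langle B_\Omega\, \xi, c\rangle = \pm\langle \xi, B_A\, c\rangle$ for all $\xi \in C_\bullet(\Omega)$ and $c \in C_\bullet(A)$. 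This reduces to matching the two cyclic-permutation formulas for $B$ through the pairing $\psi_n$, and the difficulty lies entirely in reconciling the Koszul signs coming from $\psi_n$, from $\omega$, and from the cyclic reindexing; I expect this sign bookkeeping to be the most delicate step. Once the chain-level identity is established, exactness of dualization (again from degreewise finiteness) carries it to homology and yields the commutative square in the statement.
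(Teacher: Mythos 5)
Your proposal follows essentially the same route as the paper: identify $\iota$ with $-\pi^\vee$ through $\psi$ (Proposition \ref{Proposition-KoszulDuals}), dualize the Hochschild complexes of $A$ via Lemma \ref{Lemma-Hom-Tensor-k-dual}, and compare with the complexes of $\Omega A^\vee$ computed from the small resolution of Proposition \ref{Proposition-Semi-proj-Res-Omega}. Two points need tightening. First, $C^\bullet(\Omega)$ and $C_\bullet(\Omega)$ are by definition built on the bar resolution of $\Omega$, so your claimed dg $k$-module \emph{isomorphisms} $C^\bullet(\Omega)\cong\Hom^\iota_{K^e}(A^\vee,\Omega)$ and $C_\bullet(\Omega)\cong\Omega\otimes^\iota_{K^e}A^\vee$ cannot hold literally (the underlying graded spaces differ); they are only homotopy equivalences, induced by $\tilde\nu=\id\otimes\nu\otimes\id$ and its homotopy inverse $\tilde\rho$. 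For part (1) you must in addition know that this comparison is multiplicative; your graded-commutativity remark only disposes of the anti-isomorphism $\phi$, not of this step. The paper obtains it from Lemma \ref{Lemma-Hom-Tensor-Twisting}(2) applied to the coring morphism $\nu$ together with $\pi\circ\nu=\iota$, which shows $\nu^*$ preserves convolution products. Second, the Connes operator on $C_\bullet(\Omega)$ is given by the cyclic formula on the bar model $\Omega\otimes^\pi_{K^e}B\Omega A^\vee$ and has no direct analogue on $\Omega\otimes^\iota_{K^e}A^\vee$, so the pairing identity $\langle B_\Omega\xi,c\rangle=\pm\langle\xi,B_A c\rangle$ you propose to verify does not typecheck on the small model; it must be formulated after transporting through the homotopy equivalences $\tilde{\tilde\nu}$ and $\tilde{\tilde\rho}$, which is exactly the commutative square the paper checks. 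With those two reformulations your outline, including the identification of the sign bookkeeping as the main remaining work, matches the paper's argument.
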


\begin{proof}

(1) Since $A$ is locally finite, from Lemma \ref{Lemma-Hom-Tensor-k-dual} (1), we obtain an anti-isomorphism of dg $K$-rings
$$\phi : \Hom_{K^e}^{\pi}(BA,A) \rightarrow \Hom_{K^e}^{-\pi^\vee}(A^\vee,(BA)^\vee), \ f \mapsto f^\vee.$$
Since $A$ is typical, by Proposition \ref{Proposition-KoszulDuals}, we have a dg $K$-ring isomorphism $\psi : \Omega A^\vee \rightarrow (BA)^\vee$.
Clearly, $\psi \circ \iota = - \pi^\vee$. Thus it is a twisting morphism.
It follows from Lemma \ref{Lemma-Hom-Tensor-Twisting} (1) that
$$\psi_* = \Hom(A^\vee,\psi) \ : \ \Hom_{K^e}^{\iota}(A^\vee,\Omega A^\vee) \rightarrow \Hom_{K^e}^{-\pi^\vee}(A^\vee,(BA)^\vee)$$
is a dg $K$-ring isomorphism.
Consequently, the composition
$$\psi_*^{-1} \circ \phi \ : \ \Hom^{\pi}_{K^e}(BA,A) \xrightarrow{\phi} \Hom_{K^e}^{-\pi^\vee}(A^\vee,(BA)^\vee) \xrightarrow{\psi_*^{-1}}
\Hom_{K^e}^{\iota}(A^\vee,\Omega)$$
is an anti-isomorphism of dg $K$-rings.

Moreover, the homotopy equivalences $\tilde{\nu}$ and $\tilde{\rho}$ in the proof of Proposition \ref{Proposition-Semi-proj-Res-Omega}
induce the homotopy equivalences $\widetilde{\tilde{\nu}^*}$ and $\widetilde{\tilde{\rho}^*}$ by the following diagram
$$\xymatrix@!=4pc{ \Hom_{K^e}^{\iota}(A^\vee,\Omega) \ar@<+0.5ex>@{.>}[rrr]^-{\widetilde{\tilde{\rho}^*}} \ar[d]^-{\cong} &&&
\Hom_{K^e}^\pi(B\Omega A^\vee,\Omega) \ar@<+0.5ex>@{.>}[lll]^-{\widetilde{\tilde{\nu}^*}} \ar[d]^-{\cong} \\
\Hom_{\Omega^e}(\Omega\otimes_\iota A^\vee \otimes_\iota\Omega, \Omega) \ar@<+0.5ex>[rrr]^-{\tilde{\rho}^*} &&&
\Hom_{\Omega^e}(\Omega\otimes_\pi B\Omega A^\vee \otimes_\pi\Omega, \Omega). \ar@<+0.5ex>[lll]^-{\tilde{\nu}^*} }$$
By Lemma \ref{Lemma-Hom-Tensor-Twisting}(2), $\widetilde{\tilde{\nu}^*}$ preserves convolution products.
Then the composition
$$\theta^{-1} :=\phi^{-1}\circ\psi_*\circ\widetilde{\tilde{\nu}^*} :\  \Hom_{K^e}^\pi(B\Omega A^\vee,\Omega) \rightarrow \Hom^{\pi}_{K^e}(BA,A)$$
is a homotopy equivalence preserving convolution products with homotopy inverse $\theta:=\widetilde{\tilde{\rho}^*}\circ \psi_*^{-1}\circ \phi$.

Since cup products coincide with convolution products, by taking cohomology,
we obtain an (anti-)isomorphism of graded commutative associative algebras
$$h^\bullet := H^\bullet(\theta) \ : \ HH^\bullet(A) \rightarrow HH^\bullet(\Omega A^\vee).$$

(2) The dg $K$-ring isomorphism $\psi : \Omega A^\vee \rightarrow (BA)^\vee$ induces a graded $k$-vector space isomorphism
$$\psi\otimes\id \ : \ \Omega A^\vee \otimes^{\iota}_{K^e} A^\vee \rightarrow (BA)^\vee \otimes^{-\pi^\vee}_{K^e} A^\vee.$$
Note that the differential of $\Omega A^\vee \otimes^\iota_{K^e} A^\vee$ is
$d_\iota := d_{\Omega} \otimes \id + \id \otimes d_{A^\vee} + \tau^{-1}_{\Omega ,A^\vee} \circ d_\iota^r \circ \tau_{\Omega ,A^\vee} - d_\iota^l$
and the differential of $(BA)^\vee \otimes^{-\pi^\vee}_{K^e} A^\vee$ is
$d_{-\pi^\vee} := d_{(BA)^\vee} \otimes \id + \id \otimes d_{A^\vee} + \tau^{-1}_{(BA)^\vee,A^\vee} \circ d_{-\pi^\vee}^r \circ \tau_{(BA)^\vee,A^\vee} - d_{-\pi^\vee}^l$.
It is easy to check that $\psi \otimes \id : \Omega \otimes^\iota_{K^e} A^\vee \rightarrow (BA)^\vee\otimes^{-\pi^\vee}_{K^e} A^\vee$
commutes with differentials, and thus is a dg $k$-module isomorphism.

By Lemma \ref{Lemma-Hom-Tensor-k-dual} (3), we have a dg $k$-vector space isomorphism
$$\omega_{A,BA} : (BA)^\vee \otimes^{-\pi^\vee}_{K^e} A^\vee \rightarrow (A \otimes^\pi_{K^e} BA)^\vee.$$

Moreover, the homotopy equivalences $\tilde{\nu}$ and $\tilde{\rho}$ in the proof of Proposition \ref{Proposition-Semi-proj-Res-Omega}
induce the homotopy equivalences $\tilde{\tilde{\nu}}$ and $\tilde{\tilde{\rho}}$ by the following diagram
$$\xymatrix@!=4pc{
\Omega \otimes_{K^e}^{\iota} A^\vee \ar@<+0.5ex>@{.>}[rrr]^-{\tilde{\tilde{\nu}}} \ar[d]^-{\cong} &&&
\Omega\otimes_{K^e}^\pi B\Omega A^\vee \ar@<+0.5ex>@{.>}[lll]^-{\tilde{\tilde{\rho}}} \ar[d]^-{\cong} \\
\Omega\otimes_{\Omega^e}(\Omega\otimes_\iota A^\vee \otimes_\iota\Omega) \ar@<+0.5ex>[rrr]^-{\id \otimes \tilde{\nu}} &&&
\Omega\otimes_{\Omega^e}(\Omega\otimes_\pi B\Omega A^\vee \otimes_\pi\Omega). \ar@<+0.5ex>[lll]^-{\id \otimes \tilde{\rho}} }$$

Consequently, we have a homotopy equivalence
$$\vartheta := \tilde{\tilde{\nu}} \circ (\psi \otimes \id)^{-1} \circ \omega_{A,BA}^{-1} \ : \ (A \otimes^\pi_{K^e} BA)^\vee \rightarrow \Omega\otimes_{K^e}^\pi B\Omega A^\vee$$
with homotopy inverse $\omega_{A,BA} \circ (\psi \otimes \id) \circ \tilde{\tilde{\rho}}$.
Furthermore, the Connes operators $B$ on $A \otimes^\pi_{K^e} BA$ and $\Omega\otimes_{K^e}^\pi B\Omega A^\vee$ satisfy the following commutative diagram
$$\xymatrix{ (A \otimes^\pi_{K^e} BA)^\vee \ar[d]^-{\vartheta} \ar[r]^-{B^\vee} & (A \otimes^\pi_{K^e} BA)^\vee\\
\Omega\otimes_{K^e}^\pi B\Omega A^\vee \ar[r]^-{B} & \Omega\otimes_{K^e}^\pi B\Omega A^\vee \ar[u]_-{\omega_{A,BA} \circ (\psi \otimes \id) \circ \tilde{\tilde{\rho}}}. }$$

By taking homologies, we obtain a graded $k$-module isomorphism
$$h_\bullet := H_\bullet(\vartheta) \ : \ HH_{\bullet}(A)^\vee \rightarrow HH_{-\bullet}(\Omega A^\vee)$$
and the following diagram is commutative:
$$\xymatrix{ HH_{\bullet}(A)^\vee \ar[r]^-{B^\vee} \ar[d]^-{h_\bullet} & HH_{\bullet}(A)^\vee \ar[d]^-{h_\bullet}\\
HH_{-\bullet}(\Omega A^\vee) \ar[r]^-{B} & HH_{-\bullet}(\Omega A^\vee). }$$
\end{proof}

\section{Symmetric dg $K$-rings and Calabi-Yau dg algebras}

In this section, we show that the Koszul dual $\Omega A^\vee$ of a finite dimensional complete typical $d$-symmetric dg $K$-ring $A$
is a $d$-CY dg algebra, its Hochschild cohomology $HH^\bullet(\Omega A^\vee)$ is a BV algebra,
and $HH^\bullet(\Omega A^\vee)$ and $HH^\bullet(A)$ are isomorphic as BV algebras.

\subsection{Koszul duals of symmetric dg $K$-rings}

{\bf Symmetric dg $K$-rings.} Elementary symmetric algebra could be generalized to symmetric dg $K$-ring on three levels.

\begin{definition}{\rm
An augmented dg $K$-ring $A$ is said to be

{\it derived $d$-symmetric} if there is an isomorphism $A \cong A^\vee[-d]$ in $\mathcal{D}A^e$;

{\it $d$-symmetric} if there is a dg $A$-bimodule quasi-isomorphism $A\rightarrow A^\vee[-d]$;

{\it strictly $d$-symmetric} if there is a dg $A$-bimodule isomorphism $A\cong A^\vee[-d]$.
}\end{definition}

\bigskip

\noindent{\bf A dg bimodule isomorphism.} In order to show that the Koszul dual of a finite dimensional complete typical
$d$-symmetric dg $K$-ring is $d$-CY,
we need a dg bimodule isomorphism. Let $A$ be an augmented dg $K$-ring, $C$ a finite dimensional coaugmented dg $K$-coring,
and $\pi \in \Tw(C,A)$.
Then the graded $k$-dual $C^\vee$ of $C$ is an augmented dg $K$-ring. Its product defines a dg $C^\vee$-bimodule structure on $C^\vee$,
which induces a dg $C$-bicomodule structure on $C^\vee$.
The left coaction $\Delta^l$ is the image of $\Delta^\vee$ under the composition
\begin{align*}
\Hom_K({_K}(C \otimes C)^\vee,{_K}C^\vee) & \cong \Hom_K({_K}C^\vee \otimes C^\vee,{_K}C^\vee) \\ & \cong \Hom_K({_K}C^\vee, \Hom_K({_K}C^\vee,{_K}C^\vee)) \\
& \cong \Hom_K({_K}C^\vee, {_K}C^{\vee\vee} \otimes C^\vee) \\ & \cong \Hom_K({_K}C^\vee, {_K}C \otimes C^\vee)
\end{align*}
and the right coaction $\Delta^r$ is the image of $\Delta^\vee$ under the composition
\begin{align*}
\Hom_K((C \otimes C)^\vee_K,C^\vee_K) & \cong \Hom_K(C^\vee \otimes C^\vee_K,C^\vee_K) \\ & \cong \Hom_K(C^\vee_K, \Hom_K(C^\vee,C^\vee)_K) \\
& \cong \Hom_K(C^\vee_K, C^\vee \otimes C^{\vee\vee}_K) \\ & \cong \Hom_K(C^\vee_K, C^\vee \otimes C_K).
\end{align*}
Recall that $A \otimes_{\pi} C^\vee \otimes_{\pi} A$ is the dg $A$-bimodule $A\otimes C^\vee \otimes A$ with the differential
$d_{A\otimes C^\vee\otimes A} + \id\otimes d_{\pi}^r -d_{\pi}^l\otimes\id$ where $d_{\pi}^r$ is the composition
$$C^\vee\otimes A \xrightarrow{\Delta^r\otimes\id} C^\vee\otimes C \otimes A
\xrightarrow{\id\otimes\pi\otimes\id} C^\vee\otimes A\otimes A \xrightarrow{\id\otimes\mu} C^\vee\otimes A$$
and $d_{\pi}^l$ is the composition
$$A\otimes C^\vee \xrightarrow{\id\otimes \Delta^{l}} A\otimes C\otimes C^\vee
\xrightarrow{\id\otimes\pi\otimes\id} A\otimes A\otimes C^\vee \xrightarrow{\mu\otimes\id} A\otimes C^\vee.$$
Thus $\Hom_{A^e}(A \otimes_\pi C \otimes_\pi A,A^e)$ is a dg $A^e$-module with the $A^e$-module structure induced from
the inner action of $A^e$ on $A^e$, i.e., $a \otimes b$ acts from the left on $x \otimes y$
by $(a \otimes b)(x \otimes y) := (-1)^{|a||b|+|a||x|+|b||x|}xb \otimes ay$.
Its differential $d$ is induced from that of $A \otimes_\pi C \otimes_\pi A.$
Therefore, for all $h \in \Hom_{A^e}(A \otimes_\pi C \otimes_\pi A,A^e)$ and $1 \otimes c \otimes 1 \in A \otimes_\pi C \otimes_\pi A$,
\begin{align*}
d(h)(1 \otimes c \otimes 1) = & d_{A^e}(h(1 \otimes c \otimes 1))-(-1)^{|h|}h(1 \otimes d_C(c) \otimes 1) \\
& \quad -(-1)^{|h|+|c_1|}h(1 \otimes c_1 \otimes 1)\pi(c_2) \\
& \quad \quad +(-1)^{|c_1||h|}\pi(c_1)h(1 \otimes c_2 \otimes 1)
\end{align*}
where $\Delta(c)=c_1\otimes c_2$.

\begin{lemma} \label{Lemma-BimodIso}
Let $A$ be an augmented dg $K$-ring, $C$ a finite dimensional coaugmented dg $K$-coring, and $\pi \in \Tw(C,A)$.
Then the map $\Phi : A \otimes_{\pi}C^\vee \otimes_{\pi} A \rightarrow \Hom_{A^e}(A \otimes_\pi C \otimes_\pi A,A^e),
\ a \otimes f \otimes b \mapsto \Phi(a\otimes f\otimes b)$,
given by $\Phi(a\otimes f\otimes b)(1 \otimes c \otimes 1) := (-1)^{(|a|+|f|)|b|}f(c)b \otimes a$, is an isomorphism of dg $A$-bimodules.
\end{lemma}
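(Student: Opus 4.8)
The plan is to verify, in turn, that $\Phi$ is a well-defined morphism of graded $A$-bimodules, that it commutes with the differentials, and that it is bijective. For well-definedness I would use that $A \otimes_\pi C \otimes_\pi A$ is, as a graded $A$-bimodule, free on $1 \otimes C \otimes 1 \cong C$, so that a graded $A^e$-linear map out of it is uniquely determined by its restriction to $1 \otimes C \otimes 1$, i.e. by a graded $K^e$-linear map $C \to A^e$. Thus I only need to check that $c \mapsto (-1)^{(|a|+|f|)|b|} f(c)\,b \otimes a$ is $K^e$-linear of degree $|a|+|f|+|b|$ (here $f \in C^\vee$ forces $f(c)=0$ unless $|f|+|c|=0$), whence $\Phi(a \otimes f \otimes b)$ is a genuine homogeneous element of $\Hom_{A^e}(A \otimes_\pi C \otimes_\pi A, A^e)$ and $\Phi$ is a well-defined graded map.

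Next I would check that $\Phi$ respects the two $A$-actions. The target carries the $A^e$-module structure induced by the inner action of $A^e$ on $A^e$, namely $(a \otimes b)(x \otimes y) = (-1)^{|a||b|+|a||x|+|b||x|}\,xb \otimes ay$. I would evaluate $\Phi\big((a'a) \otimes f \otimes (bb')\big)$ on $1 \otimes c \otimes 1$ and compare with $a' \cdot \Phi(a \otimes f \otimes b) \cdot b'$; the Koszul signs must be tracked carefully, and the precise role of the sign $(-1)^{(|a|+|f|)|b|}$ built into the definition of $\Phi$ is exactly to make these two expressions agree, so that $\Phi$ is a morphism of graded $A$-bimodules.

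The heart of the matter is that $\Phi$ is a chain map, i.e. $\Phi \circ d = d \circ \Phi$ with $d$ on the source equal to $d_{A \otimes C^\vee \otimes A} + \id \otimes d_\pi^r - d_\pi^l \otimes \id$ and $d$ on the target given by the displayed formula for $d(h)$ preceding the lemma. Evaluating both sides on $1 \otimes c \otimes 1$ reduces this to an identity in $A^e$: the inner-differential contributions match by $K^e$-linearity together with $-d_C^\vee = d_{C^\vee}$, while the delicate point is the twisting terms. By construction the coactions $\Delta^l,\Delta^r$ on $C^\vee$ are the transposes of the coproduct $\Delta$ on $C$, so pushing $f$ through $d_\pi^l$ and $d_\pi^r$ and then pairing against $c$ should reproduce exactly the terms $(-1)^{|c_1||h|}\pi(c_1)\,h(1 \otimes c_2 \otimes 1)$ and $(-1)^{|h|+|c_1|}h(1 \otimes c_1 \otimes 1)\,\pi(c_2)$ occurring in $d(h)$, where $\Delta(c)=c_1 \otimes c_2$. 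I expect the main obstacle to lie precisely here: reconciling the signs arising from dualization, from the Koszul-signed definitions of $d_\pi^r$ and $d_\pi^l$, and from the inner $A^e$-action, so that every term cancels or coincides. Finite-dimensionality of $C$ is what guarantees that $\Delta^l,\Delta^r$ on $C^\vee$ are honestly dual to $\Delta$ and that these pairing manipulations are legitimate.

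Finally, for bijectivity I would note that, because $C$ is finite dimensional, the evaluation pairing $C^\vee \otimes C \to k$ is perfect in each degree, so $\Phi$ carries the generating subspace $1 \otimes C^\vee \otimes 1$ isomorphically onto the subspace of $\Hom_{A^e}(A \otimes_\pi C \otimes_\pi A, A^e)$ consisting of those $h$ with $h(1 \otimes c \otimes 1) = f(c)\,1 \otimes 1$. Extending $A^e$-freely on both sides then shows that $\Phi$ is an isomorphism of graded $A$-bimodules, and combined with the chain-map step it is an isomorphism of dg $A$-bimodules, as claimed.
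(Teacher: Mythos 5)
Your outline follows essentially the same route as the paper: establish that $\Phi$ is a graded $A$-bimodule isomorphism by reducing to the generating subspace $1\otimes C^\vee\otimes 1$ (the paper makes this explicit by decomposing over the idempotents $e_1,\dots,e_t$ of $K$, writing $A\otimes_\pi C^\vee\otimes_\pi A\cong\bigoplus_{i,j}Ae_j\otimes_k(e_iCe_j)^\vee\otimes_k e_iA\cong\Hom_{K^e}^\pi(C,A^e)\cong\Hom_{A^e}(A\otimes_\pi C\otimes_\pi A,A^e)$, with finite dimensionality of $C$ doing exactly the job you assign to the perfect pairing), and then verify compatibility with the differentials on elements $1\otimes f\otimes 1$ evaluated at $1\otimes c\otimes 1$, which is legitimate by the graded Leibniz rule once the graded bimodule structure is in place.

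The one place where you defer rather than verify is precisely where the paper does its real work. You write that pushing $f$ through $d_\pi^l$ and $d_\pi^r$ and pairing against $c$ ``should reproduce'' the twisting terms of $d(h)$, and that you ``expect the main obstacle'' to be the sign bookkeeping. The paper resolves this by extracting two concrete identities from the definition of the coactions on $C^\vee$ as transposes of $\Delta$: writing $\Delta^r(f)=f'\otimes c'$, $\Delta^l(f)=c''\otimes f''$ and $\Delta(c)=c_1\otimes c_2$, one has $(-1)^{|f'||c'|}f'(c)\,c'=(-1)^{|c_1||f|}c_1\,f(c_2)$ and $f''(c)\,c''=f(c_1)\,c_2$. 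With these, both $(\Phi\circ d)(1\otimes f\otimes 1)(1\otimes c\otimes 1)$ and $(d\circ\Phi)(1\otimes f\otimes 1)(1\otimes c\otimes 1)$ evaluate to $-(-1)^{|f|}f(d(c))\,1\otimes 1+(-1)^{|c_1||f|}\pi(c_1f(c_2))\otimes 1-1\otimes\pi(f(c_1)c_2)$, and the chain-map property follows. Your plan is sound and would succeed, but to be a complete proof it must actually derive and apply these two identities; as written, the crux of the lemma is still only promised.
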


\begin{proof}
Taking the canonical $k$-basis $e_1, \cdots , e_t$ of $K=k^t$ which is a complete set of orthogonal primitive idempotent elements of the $k$-algebra $K$,
the map $\Phi$ is just the graded $A$-bimodule isomorphism
\begin{align*}
A\otimes_\pi C^\vee\otimes_\pi A & \cong \bigoplus_{1 \leq i,j \leq t}Ae_j\otimes_k(e_iCe_j)^\vee\otimes_ke_iA \\
& \cong \bigoplus_{1 \leq i,j \leq t}\Hom_k(e_iCe_j,e_iA\otimes_kAe_j) \\
& \cong \Hom_{K^e}^\pi(C,A^e) \\
& \cong \Hom_{A^e}(A \otimes_\pi C \otimes_\pi A,A^e).
\end{align*}
Now it is sufficient to show that $\Phi$ commutes with differentials, i.e.,
$\Phi \circ d = d \circ \Phi$. Since $\Phi$ is a graded $A$-bimodule morphism, by graded Leibniz rule, it is enough to prove that
$(\Phi\circ d)(1\otimes f\otimes 1) = (d\circ \Phi)(1\otimes f\otimes 1)$ for all $f \in C^\vee$, or equivalently,
$(\Phi\circ d)(1\otimes f\otimes 1)(1 \otimes c \otimes 1) = (d\circ \Phi)(1\otimes f\otimes 1)(1 \otimes c \otimes 1)$ for all $f \in C^\vee$ and $c \in C$.

Note that $d(1\otimes f \otimes 1) = 1\otimes d_{C^\vee}(f)\otimes 1 + (\id\otimes d_{\pi}^r)(1\otimes f\otimes 1)
-(d_{\pi}^l\otimes \id)(1\otimes f \otimes 1)$. Suppose $\Delta^r(f) = f' \otimes c'$. Then
\begin{align*}
& \ (\id\otimes d_{\pi}^r)(1\otimes f\otimes 1) \\
= & \ (\id\otimes\id \otimes \mu)(\id\otimes \id\otimes \pi\otimes \id)(\id\otimes \Delta^r\otimes \id)(1\otimes f\otimes 1) \\
= & \ (\id\otimes\id \otimes \mu)(\id\otimes \id\otimes \pi\otimes \id)(1\otimes f'\otimes c'\otimes 1)\\
= & \ (\id\otimes\id \otimes \mu)((-1)^{|f'|}  1\otimes f' \otimes \pi(c')\otimes 1) \\
= & \ (-1)^{|f'|}1\otimes f'\otimes \pi(c').
\end{align*}
Similarly, suppose $\Delta^l(f) = c'' \otimes f''$. Then $(d_{\pi}^l\otimes\id)(1\otimes f\otimes 1) = \pi(c'')\otimes f'' \otimes 1$.
Thus $(\Phi\circ d)(1\otimes f\otimes 1) = \Phi(1\otimes d_{C^\vee}(f)\otimes 1+(-1)^{|f'|}1\otimes f' \otimes \pi(c')-\pi(c'')\otimes f'' \otimes 1)$.
Suppose $\Delta(c) = c_1\otimes c_2$. Then
\begin{align*}
& \ (\Phi\circ d)(1\otimes f\otimes 1)(1 \otimes c \otimes 1) \\
= & \ \Phi(1\otimes d_{C^\vee}(f)\otimes 1+(-1)^{|f'|}1\otimes f' \otimes \pi(c')-\pi(c'')\otimes f'' \otimes 1)(1 \otimes c \otimes 1) \\
= & \ -(-1)^{|f|}f(d(c))1\otimes 1 +(-1)^{|f'||c'|}f'(c)\pi(c')\otimes 1-f''(c)1\otimes\pi(c'')\\
= & \ -(-1)^{|f|}f(d(c))1\otimes 1 +(-1)^{|f'||c'|}\pi(f'(c)c')\otimes 1-1\otimes\pi(f''(c)c'')
\end{align*}
and
\begin{align*}
& \ (d\circ \Phi)(1\otimes f\otimes 1)(1 \otimes c \otimes 1) \\
= & \ d_{A^e}(\Phi(1\otimes f\otimes 1)(1 \otimes c \otimes 1))-(-1)^{|f|}\Phi(1\otimes f\otimes 1)(1 \otimes d(c) \otimes 1) \\
& \ \quad - (-1)^{|f|+|c_1|}\Phi(1\otimes f\otimes 1)(1 \otimes c_1 \otimes 1)\pi(c_2) \\
& \ \quad \quad +(-1)^{|c_1||f|}\pi(c_1)\Phi(1\otimes f\otimes 1)(1 \otimes c_2 \otimes 1) \\
= & \ -(-1)^{|f|}f(d(c))1\otimes 1 -f(c_1)1\otimes \pi(c_2) +(-1)^{|c_1||f|}\pi(c_1)f(c_2)\otimes 1\\
= & \ -(-1)^{|f|}f(d(c))1\otimes 1 -1\otimes\pi(f(c_1)c_2) +(-1)^{|c_1||f|}\pi(c_1f(c_2))\otimes 1.
\end{align*}

Note that $\Delta^r(f) = f' \otimes c'$ and $\Delta(c) = c_1\otimes c_2$.
From the definition of $\Delta^r$, we can obtain $(-1)^{|f'||c'|}f'(c)c'=(-1)^{|c_1||f|}c_1f(c_2)$.
Similarly, from $\Delta^l(f) = c'' \otimes f''$ and $\Delta(c) = c_1\otimes c_2$, we can obtain $f''(c)c''=f(c_1)c_2$.
Therefore, $(\Phi\circ d)(1\otimes f\otimes 1)(1 \otimes c \otimes 1) = (d\circ \Phi)(1\otimes f\otimes 1)(1 \otimes c \otimes 1)$ for all $f \in C^\vee$ and $c \in C$.
\end{proof}

\begin{definition}{\rm (Ginzburg \cite{Gin06} and Van den Bergh \cite{VdB15})
A homologically smooth dg algebra $A$ is said to be {\it Calabi-Yau of dimension $d$} ({\it $d$-CY} for short)
if there exists an isomorphism $A \cong A^![d]$ in $\mathcal{D}A^e$
where $A^{!} := \RHom_{A^e}(A, A^e)$ is the dualizing dg $A$-bimodule.
}\end{definition}

Thanks to Proposition \ref{Proposition-Semi-proj-Res-Omega},
the semi-projective resolution $\Omega \otimes_{\pi} B \Omega A^\vee \otimes_{\pi} \Omega$ of the dg $\Omega$-bimodule $\Omega$
can be replaced with $\Omega \otimes_{\iota} A^\vee \otimes_{\iota} \Omega$.

\begin{theorem} \label{Theorem-Sym-Dual-CY}
Let $A$ be a finite dimensional complete typical derived $d$-symmetric dg $K$-ring.
Then $\Omega A^\vee$ is a $d$-CY dg algebra.
\end{theorem}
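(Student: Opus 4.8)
The plan is to verify the two defining properties of a $d$-CY dg algebra for $\Omega := \Omega A^\vee$ separately: homological smoothness, and the existence of an isomorphism $\Omega \cong \Omega^![d]$ in $\mathcal{D}\Omega^e$, where $\Omega^! = \RHom_{\Omega^e}(\Omega, \Omega^e)$. The first is immediate, since $A$ is finite dimensional and complete, Theorem \ref{Theorem-Homologically-Smooth} already gives that $\Omega$ is homologically smooth. So the entire content lies in the duality isomorphism.

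First I would compute $\Omega^!$ using the small resolution. By Proposition \ref{Proposition-Semi-proj-Res-Omega}, $\Omega \otimes_\iota A^\vee \otimes_\iota \Omega$ is a semi-projective resolution of the dg $\Omega$-bimodule $\Omega$, so $\Omega^! \cong \Hom_{\Omega^e}(\Omega \otimes_\iota A^\vee \otimes_\iota \Omega, \Omega^e)$. Applying Lemma \ref{Lemma-BimodIso} with the coring $C = A^\vee$ (finite dimensional since $A$ is) and the twisting morphism $\iota$, and using $(A^\vee)^\vee \cong A$ by local finiteness, yields a dg $\Omega$-bimodule isomorphism $\Hom_{\Omega^e}(\Omega \otimes_\iota A^\vee \otimes_\iota \Omega, \Omega^e) \cong \Omega \otimes_\iota A \otimes_\iota \Omega$, where $A = (A^\vee)^\vee$ carries the $A^\vee$-bicomodule structure dualizing its product described just before Lemma \ref{Lemma-BimodIso}. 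Hence $\Omega^! \cong \Omega \otimes_\iota A \otimes_\iota \Omega$ in $\mathcal{D}\Omega^e$.

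Then I would transport the symmetric structure across the dual. Since $A$ is finite dimensional, the graded $k$-dual is an exact anti-equivalence sending finite dimensional dg $A$-bimodules to dg $A^\vee$-bicomodules; under it the bimodule $A$ goes to the coring $A^\vee$ with its coproduct bicomodule structure, while the bimodule $A^\vee[-d]$ goes to $(A^\vee[-d])^\vee = A[d]$ with the product-dual structure of Lemma \ref{Lemma-BimodIso}. Dualizing the derived $d$-symmetric isomorphism $A \cong A^\vee[-d]$ of $\mathcal{D}A^e$ therefore produces an isomorphism $A^\vee \cong A[d]$, equivalently $A \cong A^\vee[-d]$, in the derived category of dg $A^\vee$-bicomodules. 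Applying the functor $N \mapsto \Omega \otimes_\iota N \otimes_\iota \Omega$ and using Proposition \ref{Proposition-Semi-proj-Res-Omega} once more, I would obtain
$$\Omega^! \cong \Omega \otimes_\iota A \otimes_\iota \Omega \cong \Omega \otimes_\iota A^\vee[-d] \otimes_\iota \Omega \cong (\Omega \otimes_\iota A^\vee \otimes_\iota \Omega)[-d] \cong \Omega[-d],$$
that is, $\Omega \cong \Omega^![d]$ in $\mathcal{D}\Omega^e$, which is exactly the Calabi-Yau property.

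The hard part will be the last step: the derived symmetric isomorphism lives in $\mathcal{D}A^e$ and is a priori only a zigzag of bimodule quasi-isomorphisms, so I must check that $N \mapsto \Omega \otimes_\iota N \otimes_\iota \Omega$ descends to derived categories, i.e., carries quasi-isomorphisms of dg $A^\vee$-bicomodules to quasi-isomorphisms of dg $\Omega$-bimodules. The tool is the exhaustive filtration of Lemma \ref{Lemma-Semiproj}, whose subquotients carry only the untwisted differential $d_\Omega \otimes \id \otimes \id + \id \otimes \id \otimes d_\Omega$; because $A$ is finite dimensional this filtration is finite, and a K\"{u}nneth argument over the semisimple ring $K$ together with the resulting convergent spectral sequence reduces preservation of quasi-isomorphisms to the field level. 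Establishing this functoriality cleanly, rather than the essentially formal bookkeeping of shifts and bicomodule structures, is where the real work lies.
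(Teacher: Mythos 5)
Your overall architecture coincides with the paper's: homological smoothness from Theorem \ref{Theorem-Homologically-Smooth}, the computation $\Omega^!\cong\Omega\otimes_\iota A\otimes_\iota\Omega$ via Proposition \ref{Proposition-Semi-proj-Res-Omega} and Lemma \ref{Lemma-BimodIso}, dualization of the zigzag $A\leftarrow A\otimes_\pi BA\otimes_\pi A\rightarrow A^\vee[-d]$ into a zigzag of dg $A^\vee$-bicomodule quasi-isomorphisms $A^\vee\rightarrow A^\vee\otimes_\iota\Omega\otimes_\iota A^\vee\leftarrow A[d]$ (this is where typicality enters, to identify $(A\otimes_\pi BA\otimes_\pi A)^\vee$ with $A^\vee\otimes_\iota\Omega\otimes_\iota A^\vee$; note the middle term of the zigzag is not finite dimensional, so one cannot simply invoke the duality anti-equivalence for finite dimensional bimodules), and finally the verification that $\Omega\otimes_\iota-\otimes_\iota\Omega$ carries these quasi-isomorphisms to quasi-isomorphisms. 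You correctly isolate this last point as the real content.

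However, the tool you name for that step would fail. The filtration of Lemma \ref{Lemma-Semiproj} filters the middle bicomodule $N$, and its subquotients carry only the differential $d_\Omega\otimes\id\otimes\id+\id\otimes\id\otimes d_\Omega$ precisely because the $d_N$-part has been killed. Consequently the $E^1$ page of the resulting spectral sequence is, up to K\"{u}nneth, $H(\Omega)\otimes\mathrm{gr}(N)\otimes H(\Omega)$, and a bicomodule quasi-isomorphism $f:N\rightarrow N'$ induces $\id\otimes\mathrm{gr}(f)\otimes\id$ there; this is an isomorphism only when $\mathrm{gr}(f)$ is an isomorphism of graded $K$-bimodules, which is far stronger than $f$ being a quasi-isomorphism. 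In the case at hand $f$ is $\tilde{\mu}^\vee:A^\vee\rightarrow A^\vee\otimes_\iota\Omega\otimes_\iota A^\vee$, whose source is finite dimensional and whose target is not, so $\mathrm{gr}(f)$ cannot be an isomorphism and the comparison theorem yields nothing. The paper instead filters by total weight in the two \emph{outer} cobar factors, $F_{-p}=\bigoplus_{m+n\geq p}(s^{-1}\overline{A}^\vee)^{\otimes m}\otimes N\otimes(s^{-1}\overline{A}^\vee)^{\otimes n}$; the $E^0$ differential then retains $\id\otimes d_N\otimes\id$, so K\"{u}nneth expresses $E^1$ through $H_\bullet(N)$ and the comparison theorem applies. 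Boundedness of this filtration in each total degree comes from typicality (which forces $s^{-1}\overline{A}^\vee$ to be concentrated in degrees $\leq -1$ or $\geq 1$), not merely from finite dimensionality. With this replacement of the filtration, your argument becomes exactly the paper's proof.
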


\begin{proof} Since $A$ is a finite dimensional complete dg $K$-ring,
by Theorem \ref{Theorem-Homologically-Smooth}, $\Omega A^\vee$ is homologically smooth.
The derived $d$-symmetry of $A$ implies that there is
an isomorphism $A \cong A^\vee[-d]$ in $\mathcal{D}A^e$ which gives a dg $A$-bimodule quasi-isomorphism
$\gamma : A \otimes_\pi BA \otimes_\pi A \rightarrow A^\vee[-d]$.
Since $A$ is finite dimensional and typical, there is a dg $A^\vee$-bicomodule isomorphism
$(A\otimes_\pi BA \otimes_\pi A)^\vee \cong A^\vee\otimes_{\iota} \Omega \otimes_{\iota} A^\vee$.
Taking the graded duals of dg $A$-bimodule quasi-isomorphisms $A \xleftarrow{\tilde{\mu}} A\otimes_\pi BA \otimes_\pi A \xrightarrow{\gamma} A^\vee[-d]$,
we get dg $A^\vee$-bicomodule quasi-isomorphisms $A^\vee\xrightarrow{\tilde{\mu}^\vee} A^\vee\otimes_{\iota} \Omega \otimes_{\iota} A^\vee \xleftarrow{\gamma^\vee}(A^\vee[-d])^\vee= A[d].$
Now we show that
$$\Omega\otimes_\iota A^\vee \otimes_{\iota}\Omega\xrightarrow{\id\otimes \tilde{\mu}^\vee \otimes \id}
\Omega \otimes_{\iota}(A^\vee\otimes_{\iota} \Omega \otimes_{\iota} A^\vee)\otimes_{\iota}\Omega \xleftarrow{\id\otimes \gamma^\vee\otimes \id} \Omega\otimes_{\iota}A[d]\otimes_{\iota}\Omega$$
are dg $\Omega$-bimodule quasi-isomorphisms.

Since $\tilde{\mu}^\vee : A^\vee \rightarrow A^\vee\otimes_{\iota} \Omega \otimes_{\iota} A^\vee=:\tilde{\Omega}$
is a dg $A^\vee$-bicomodule quasi-isomorphism,
$\id\otimes \tilde{\mu}^\vee \otimes \id : \Omega\otimes_{\iota}A^\vee \otimes_{\iota} \Omega \rightarrow \Omega\otimes_{\iota} \tilde{\Omega} \otimes_{\iota}\Omega$
is a dg $\Omega$-bimodule morphism. Now we show that it is a dg $\Omega$-bimodule quasi-isomorphism.
The dg $\Omega$-bimodule $\Omega\otimes_{\iota}A^\vee\otimes_{\iota}\Omega$ admits a dg $\Omega$-bimodule filtration
$$\dots \subset F_{-p-1}\subset F_{-p}\subset \dots \subset F_{-1}\subset F_0= \Omega\otimes_{\iota}A^\vee\otimes_{\iota}\Omega$$
where $F_{-p} := \bigoplus\limits_{m+n\geq p}(s^{-1}\overline{A}^\vee)^{\otimes m} \otimes A^\vee \otimes (s^{-1}\overline{A}^\vee)^{\otimes n}$
for all $p \in \mathbb{N}$,
which determines a spectral sequence:
$$E_{pq}^0 :=(F_{-p})_{p+q}/(F_{-p-1})_{p+q}
= (\bigoplus\limits_{m+n=p}(s^{-1}\overline{A}^\vee)^{\otimes m} \otimes A^\vee \otimes (s^{-1}\overline{A}^\vee)^{\otimes n})_{p+q}.$$
Recall that the differential of $\Omega$ is $d_0+d_1$ where $d_0$ preserves the weight of an element in $\Omega$
and $d_1$ maps ${(s^{-1}\overline{A}^\vee)}^{\otimes n}$ to $(s^{-1}\overline{A}^\vee)^{\otimes n+1}$.
The differential of $\Omega\otimes_{\iota}A^\vee\otimes_{\iota}\Omega$ is
$(d_0+d_1)\otimes \id\otimes \id +\id \otimes d_{A^\vee}\otimes \id+\id\otimes \id\otimes (d_0+d_1)+\id\otimes d_{\iota}^r-d_{\iota}^l\otimes \id$,
so the induced differential on $E_{p\bullet}^0$ is just $d_0\otimes \id\otimes \id +\id\otimes d_{A^\vee}\otimes \id+\id\otimes \id\otimes d_0$.
Since $A$ is finite dimensional and typical, $s^{-1}\overline{A}^\vee$ is either non-positive or non-negative.
Then the filtration is bounded,
and thus the spectral sequence is bounded and converges to the homology of $\Omega\otimes_{\iota}A^\vee \otimes_{\iota} \Omega$.
Similarly, the dg $\Omega$-bimodule $\Omega \otimes_{\iota} \tilde{\Omega} \otimes_{\iota}\Omega$ admits a bounded filtration $\{F_{-p}^\prime \ | \ p \in \mathbb{N}\}$
which determines a bounded spectral sequence $E_{pq}^{\prime}$ converging to the homology of $\Omega \otimes_{\iota} \tilde{\Omega} \otimes_{\iota}\Omega$.
The dg $\Omega$-bimodule morphism $\id\otimes \tilde{\mu}^\vee \otimes \id : \Omega \otimes_{\iota}A^\vee \otimes_{\iota} \Omega
\rightarrow \Omega \otimes_{\iota}\tilde{\Omega} \otimes_{\iota} \Omega$ induces a morphism of spectral sequences
$E_{pq}^0(\id\otimes \tilde{\mu}^\vee \otimes \id) : E_{pq}^0 \rightarrow E_{pq}^{\prime 0}$.
Since the differential on $E_{p\bullet}^0$ (resp. $E_{p\bullet}^{\prime 0}$) is
$d_0 \otimes \id \otimes \id +\id \otimes d_{A^\vee} \otimes \id+\id\otimes \id\otimes d_0$
(resp. $d_0 \otimes \id \otimes \id+\id \otimes d_{\tilde{\Omega}} \otimes \id+\id\otimes \id\otimes d_0$),
by K\"{u}nneth formula (\cite[Theorem 3.6.3]{Wei94}), the induced morphism on the first page
$E_{pq}^1(\id\otimes \tilde{\mu}^\vee \otimes \id) : E_{pq}^1 = H_q(E^0_{p\bullet}) \rightarrow E_{pq}^{\prime 1} = H_q(E^{\prime0}_{p\bullet})$ is an isomorphism.
By Comparison Theorem (\cite[Theorem 5.2.12]{Wei94}), $\id\otimes \tilde{\mu}^\vee \otimes \id$ induces an isomorphism on the homologies of
$\Omega \otimes_{\iota} A^\vee \otimes_{\iota} \Omega$ and $\Omega \otimes_{\iota} \tilde{\Omega} \otimes_{\iota} \Omega$.
Hence, $\id\otimes \tilde{\mu}^\vee\otimes \id$ is a dg $\Omega$-bimodule quasi-isomorphism, and thus an isomorphism in $\mathcal{D}\Omega^e$.
Similarly, $\id\otimes \gamma^\vee \otimes \id$ is also an isomorphism in $\mathcal{D}\Omega^e$.
Thus $\Omega\otimes_{\iota} A[d] \otimes_{\iota}\Omega \cong \Omega\otimes_{\iota} A^\vee \otimes_{\iota}\Omega$ in $\mathcal{D}\Omega^e$.

By Proposition \ref{Proposition-Semi-proj-Res-Omega} and Lemma \ref{Lemma-BimodIso}, we have
$$\begin{array}{ll} \RHom_{\Omega^e}(\Omega,\Omega^e)[d] & = \Hom_{\Omega^e}(\Omega\otimes_{\iota} A^\vee \otimes_{\iota}\Omega ,\Omega^e)[d] \\
& \cong (\Omega\otimes_{\iota} A \otimes_{\iota}\Omega)[d] \\
& \cong \Omega\otimes_{\iota} A[d] \otimes_{\iota}\Omega \\
& \cong \Omega\otimes_{\iota} A^\vee \otimes_{\iota}\Omega \\
& \cong \Omega \end{array}$$
in $\mathcal{D}\Omega^e$. Therefore, $\Omega$ is a $d$-Calabi-Yau dg algebra.
\end{proof}

\begin{remark}{\rm
If $A$ is a finite dimensional complete strictly $d$-symmetric dg $K$-ring then $\Omega A^\vee$ is a $d$-CY dg algebra.
Note that we need not the assumption that $A$ is typical here.
Indeed, since $A[d]$ and $A^\vee$ are isomorphic as dg $A$-bimodules, they are isomorphic as $A^\vee$-bicomodules.
Thus $\Omega \otimes_{\iota} A[d] \otimes_{\iota}\Omega$ and $\Omega \otimes_{\iota}A^\vee \otimes_{\iota} \Omega$
are isomorphic as dg $\Omega$-bimodules. Then we can use the last paragraph of the proof of Theorem \ref{Theorem-Sym-Dual-CY}.
}\end{remark}

\subsection{Batalin-Vilkovisky algebra isomorphisms}

In this part, we show that for a finite dimensional complete typical $d$-symmetric dg $K$-ring $A$,
the Hochschild cohomology $HH^\bullet(\Omega A^\vee)$ of $\Omega A^\vee$ is a BV algebra,
and $HH^\bullet(\Omega A^\vee)$ and $HH^\bullet(A)$ are isomorphic as BV algebras.

\bigskip

\noindent{\bf Batalin-Vilkovisky algebras.} Batalin-Vilkovisky algebra structure is subtler than Gerstenhaber algebra structure.

\begin{definition}{\rm
A {\it Batalin-Vilkovisky (BV for short) algebra} $(A^\bullet, \cdot, [-,-], \Delta)$ is a Gerstenhaber algebra $(A^\bullet, \cdot, [-,-])$
equipped with an operator $\Delta : A^\bullet \rightarrow A^\bullet$ of degree $-1$
such that $\Delta^2=0$ and $[-,-]$ is the deviation from being a derivation for the product $\cdot$, i.e.,
$[a,b] = (-1)^{|a|} \Delta(ab) - (-1)^{|a|} \Delta(a)b - a\Delta(b)$ for all $a,b \in A^\bullet$.
}\end{definition}

Let $(A^\bullet, \cdot, [-,-], \Delta)$ be a BV algebra.
It follows from $\Delta^2=0$ that $\Delta$ is a derivation on the Gerstenhaber bracket $[-,-]$, i.e.,
$\Delta[a,b] = [\Delta(a),b]+(-1)^{|a|+1}[a,\Delta(b)]$ for all $a,b \in A^\bullet$.
Moreover, we have the following {\it 7-term relation} \cite{Get94}:
\begin{eqnarray*}
\Delta(abc) = & \Delta(ab)c+(-1)^{|a|}a\Delta(bc)+(-1)^{(|a|+1)|b|}b\Delta(ac)\\
& \quad -\Delta(a)bc-(-1)^{|a|}a\Delta(b)c-(-1)^{|a|+|b|}ab\Delta(c)
\end{eqnarray*}
for all $a,b,c \in A^\bullet$.

\medskip

The 7-term relation provides an equivalent definition of BV algebra:

\begin{definition} {\rm
A {\it BV algebra} $(A^\bullet, \cdot, \Delta)$ is a graded commutative associative algebra $(A^\bullet, \cdot)$
endowed with an operator $\Delta: A^\bullet \rightarrow A^\bullet$ of degree $-1$ satisfying $\Delta^2=0$ and the 7-term relation.
}\end{definition}

Indeed, if $[a,b] := (-1)^{|a|}\Delta(ab)-(-1)^{|a|}\Delta(a)b - a\Delta(b)$ then the graded Leibniz rule for $[-,-]$ is equivalent to the 7-term relation.
Furthermore, it is easy to show that $[-,-]$ is a Gerstenharber bracket by applying the 7-term relation.

\begin{definition}{\rm
A {\it BV algebra morphism} from a BV algebra $(A^\bullet, \cdot, [-,-], \Delta)$ to a BV algebra $(A'^\bullet, \cdot', [-,-]', \Delta')$
is a graded algebra morphism $f : A^\bullet \rightarrow A'^\bullet$ satisfying $f \Delta = \Delta' f$.
}\end{definition}

\bigskip

\noindent{\bf BV algebras for $d$-CY dg algebras.} In order to define the BV operator (differential) $\Delta$
on the Hochschild cohomology of a $d$-CY dg algebra, we need Van den Bergh dual whose proof is also included here for later use.

\begin{theorem} \label{Theorem-VdB-Dual} {\rm (Van den Bergh \cite{VdB98})}
Let $A$ be a $d$-CY dg algebra. Then there is a graded $k$-vector space isomorphism:
$$D : HH_{d-\bullet}(A) \rightarrow HH^{\bullet}(A).$$
\end{theorem}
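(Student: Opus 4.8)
The plan is to read off both Hochschild theories as derived functors over the enveloping algebra $A^e$ and then convert $\RHom$ into $\otimes^{L}$ using homological smoothness together with the Calabi-Yau identification. From the Calculi subsection we already have $C^\bullet(A)\cong\RHom_{A^e}(A,A)$ and $C_\bullet(A)\cong A\otimes^{L}_{A^e}A$, so that $HH^\bullet(A)=H^\bullet\RHom_{A^e}(A,A)$ and $HH_\bullet(A)=H_\bullet(A\otimes^{L}_{A^e}A)$. The whole problem is therefore to compare these two objects in $\mathcal{D}k$.

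The key step is a tensor-hom isomorphism valid for perfect modules. Since $A$ is $d$-CY it is homologically smooth, i.e. $A\in\per A^e$, and hence the canonical evaluation morphism
$$\RHom_{A^e}(A,A^e)\otimes^{L}_{A^e}A\longrightarrow\RHom_{A^e}(A,A)$$
is an isomorphism in $\mathcal{D}k$. I would prove this by reduction to the free module: the morphism is an isomorphism when the first argument $A$ is replaced by $A^e$, and the class of objects for which it is an isomorphism is closed under shifts, mapping cones and direct summands, hence contains all of $\per A^e$, in particular $A$. Writing $A^{!}:=\RHom_{A^e}(A,A^e)$, this yields $\RHom_{A^e}(A,A)\cong A^{!}\otimes^{L}_{A^e}A$.

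Next I would feed in the Calabi-Yau condition. The isomorphism $A\cong A^{!}[d]$ in $\mathcal{D}A^e$ is equivalent to $A^{!}\cong A[-d]$, and substituting gives
$$\RHom_{A^e}(A,A)\cong A[-d]\otimes^{L}_{A^e}A\cong (A\otimes^{L}_{A^e}A)[-d]$$
in $\mathcal{D}k$. Taking cohomology and tracking degrees under the convention that lower degree $i$ equals upper degree $-i$, the shift $[-d]$ raises cohomological degree by $d$, so $HH^n(A)\cong H^{n-d}(A\otimes^{L}_{A^e}A)=H_{d-n}(A\otimes^{L}_{A^e}A)=HH_{d-n}(A)$. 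Assembling over all $n$ gives the desired graded $k$-vector space isomorphism $D:HH_{d-\bullet}(A)\to HH^\bullet(A)$. For the subsequent construction of the BV operator it is worth recording $D$ (or its inverse) concretely as cap product with the fundamental class $\omega\in HH_d(A)$ that corresponds to $\id_A\in HH^0(A)$ under the chain above, so that $\Delta:=D\circ B\circ D^{-1}$ becomes a well-defined operator of degree $-1$.

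The main obstacle is the perfectness isomorphism of the second step: it genuinely uses that $A$ is compact in $\mathcal{D}A^e$, and one must verify that the evaluation map is natural enough to survive the reduction and to be computed on the explicit semi-projective resolutions used elsewhere in the paper, so that the resulting $D$ really is the cap-product map required for transporting the Connes operator $B$. The remaining work is routine bookkeeping of shifts and Koszul signs between the homological and cohomological gradings.
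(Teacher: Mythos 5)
Your proposal is correct and follows essentially the same route as the paper: the paper's proof is exactly the chain $HH_{d-\bullet}(A)=H^\bullet(A\otimes^L_{A^e}A[-d])\cong H^\bullet(A\otimes^L_{A^e}A^!)\cong H^\bullet(\RHom_{A^e}(A,A))=HH^\bullet(A)$, with the last step being the evaluation isomorphism granted by $A\in\per A^e$, which you justify (correctly, if more explicitly than the paper) by d\'evissage from $A^e$. Your closing remarks about realizing $D$ concretely on the bar resolution are not needed for this statement but match what the paper does later in Lemma \ref{Lemma-VdBDualModMor}.
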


\begin{proof} Since $A$ is $d$-CY, there is an isomorphism $A \cong A^![d]$ in $\mathcal{D}A^e$.
Then the following composition $D$ is as required:
\begin{align*}
HH_{d-\bullet}(A) & = H^{\bullet-d}(A \otimes^L_{A^e} A) = H^\bullet(A \otimes^L_{A^e} A[-d]) \\
& \cong H^\bullet(A \otimes^L_{A^e} A^!) = H^\bullet(A \otimes^L_{A^e}\RHom_{A^e}(A,A^e)) \\
& \cong H^\bullet(\RHom_{A^e}(A,A)) = HH^{\bullet}(A).
\end{align*}
\qedhere
\end{proof}

Indeed, Van den Bergh dual $D$ is a graded $HH^\bullet(A)$-module isomorphism.

\begin{lemma} \label{Lemma-VdBDualModMor} {\rm (Ginzburg \cite[Theorem 3.4.3 (i)]{Gin06} and Abbaspour \cite[Lemma 3.7]{Abb15})}
Let $A$ be a $d$-CY dg algebra.
Then $D(i_fa)=f \cup Da$ for all $f \in HH^\bullet(A)$ and $a \in HH_{d-\bullet}(A)$,
i.e., the following diagram is commutative
$$\xymatrix{ HH^\bullet(A) \otimes_k HH_{d-\bullet}(A) \ar[r]^-{\cap} \ar[d]^-{\id \otimes D}_-{\cong} & HH_{d-\bullet}(A) \ar[d]^-{D}_-{\cong} \\
HH^\bullet(A) \otimes_k HH^\bullet(A) \ar[r]^-{\cup} & HH^\bullet(A) } \ \ \
\xymatrix{ f \otimes a \ar@{|->}[r] \ar@{|->}[d] & i_fa \ar@{|->}[d] \\
f \otimes Da \ar@{|->}[r] & f \cup Da = D(i_fa) }$$
which implies that $D : HH_{d-\bullet}(A) \rightarrow HH^\bullet(A)$ is a graded $HH^\bullet(A)$-module isomorphism.
\end{lemma}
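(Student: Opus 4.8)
The plan is to lift the entire statement into the derived category $\mathcal{D}A^e$ and to reduce the identity $D(i_f a) = f \cup Da$ to the associativity of composition of morphisms. The starting point is that a $d$-CY dg algebra is homologically smooth by definition, so $A \in \per A^e$ and $A$ is a dualizable (reflexive) object of $\mathcal{D}A^e$: the biduality morphism $A \to A^{!!}$ is an isomorphism, where $A^! = \RHom_{A^e}(A, A^e)$. Consequently, for every dg $A$-bimodule $M$ there is a contraction isomorphism $\xi_M : A \otimes^L_{A^e} M \xrightarrow{\ \sim\ } \RHom_{A^e}(A^!, M)$, natural in $M$. Taking $M = A$ and writing $HH^n(A) = \Hom_{\mathcal{D}A^e}(A, A[n])$, this identifies $HH_\bullet(A) = H\big(A \otimes^L_{A^e} A\big)$ with $\Hom_{\mathcal{D}A^e}(A^!, A[{-\bullet}])$; combining it with the Calabi-Yau isomorphism recovers exactly the composite $D$ of the proof of Theorem \ref{Theorem-VdB-Dual}.

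Under these identifications I would check three compatibilities, each of which is a naturality statement. First, the cap product $i_f$ is $\id_A \otimes^L f$ applied to the coefficient (right) factor of $A \otimes^L_{A^e} A$; since $\xi_M$ is natural in $M$, the operator $i_f$ becomes \emph{post-composition} $\phi \mapsto f \circ \phi$ on $\Hom_{\mathcal{D}A^e}(A^!, A[-\bullet])$. Second, the Calabi-Yau isomorphism $A \cong A^![d]$ gives $\eta[-d] : A[-d] \xrightarrow{\sim} A^!$ in $\mathcal{D}A^e$, and $D$ is exactly \emph{pre-composition} with $\eta[-d]$, i.e.\ the map $\Hom_{\mathcal{D}A^e}(A^!, A[n-d]) \to \Hom_{\mathcal{D}A^e}(A[-d], A[n-d]) = \Hom_{\mathcal{D}A^e}(A, A[n]) = HH^n(A)$. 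Third, the cup product on $HH^\bullet(A) = \Hom_{\mathcal{D}A^e}(A, A[\bullet])$ is the Yoneda (composition) product. Granting these, if $\hat a$ denotes the morphism representing $a$, then
$$D(i_f a) = (f \circ \hat a) \circ \eta[-d] = f \circ (\hat a \circ \eta[-d]) = f \circ Da = f \cup Da,$$
so the diagram commutes and $D$ is a graded $HH^\bullet(A)$-module isomorphism, since it is already a graded $k$-linear isomorphism by Theorem \ref{Theorem-VdB-Dual}.

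The conceptual step is now trivial, so the real work — and the main obstacle — lies in matching the \emph{chain-level} operators of the present paper with their abstract counterparts, with correct signs. Concretely, I would have to verify that the cap product $i_f$ defined on the $K$-reduced model $A \otimes^\pi_{K^e} BA$ corresponds, through the quasi-isomorphisms identifying this complex with $A \otimes^L_{A^e} A$, to $\id_A \otimes^L f$ on the coefficient factor, and that the cup product defined via the $K$-reduced Hochschild cochain complex $\Hom^\pi_{K^e}(BA,A)$ coincides with Yoneda composition in $\mathcal{D}A^e$ up to the Koszul signs. The signs on both sides are governed by the same Koszul rule, so they should cancel coherently, but this bookkeeping is where care is required; the naturality and $HH^\bullet(A)$-equivariance of the contraction $\xi_M$, which depends essentially on $A \in \per A^e$, must also be pinned down at the level of complexes. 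As a fallback avoiding this comparison, one can instead argue entirely at the chain level, inserting the explicit formula for $i_f$ into the explicit chain model for $D$ and matching it term by term with the cup product formula; this is more laborious but self-contained, and is essentially the route of Ginzburg \cite{Gin06} and Abbaspour \cite{Abb15}.
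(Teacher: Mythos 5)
Your proposal is correct and is essentially the paper's own argument: the three ``compatibilities'' you list (the duality/contraction isomorphism intertwining the evaluation map on $P\otimes_{A^e}P$ with the composition map on $\End_{A^e}(P)$, the identification of evaluation with $i_f$, and the identification of composition with the -- opposite -- cup product) correspond one-to-one to the three commutative diagrams of complexes that the paper verifies explicitly before passing to cohomology. The only difference is packaging: the paper carries out at the chain level, with the explicit homotopy inverses $\varsigma$ and $\id\otimes\Delta$, the sign-sensitive verifications that you defer to a concluding remark, so no genuinely different route is involved.
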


\begin{proof}
Let $P = A \otimes_\pi BA \otimes_\pi A$ be the two-sided bar resolution of $A$ and
$$\tilde{\mu} : P=A \otimes_\pi BA \otimes_\pi A \xrightarrow{\id \otimes \varepsilon \otimes \id} A \otimes K \otimes A = A \otimes A \xrightarrow{\mu} A.$$
Then the isomorphism $A \cong A^![d]$ in $\mathcal{D}A^e$ provides
a dg $A$-bimodule quasi-isomorphism $\sigma : P \rightarrow \Hom_{A^e}(P,A^e)[d]$.

Firstly, the evaluation map
$$\ev : \End_{A^e}(P) \otimes_k (P \otimes_{A^e}P) \rightarrow P \otimes_{A^e}P, \; f \otimes(p \otimes p') \mapsto f(p) \otimes p',$$
and the composition map
$$\comp : \End_{A^e}(P) \otimes_k \End_{A^e}(P) \rightarrow \End_{A^e}(P), \ f \otimes g \mapsto f \circ g,$$
satisfy the following commutative diagram:
$$\xymatrix{ \End_{A^e}(P) \otimes_k (P \otimes_{A^e} P)[-d] \ar[r]^-{\ev[-d]} \ar[d]^-{\cong}
& (P \otimes_{A^e} P)[-d] \ar[d]^-{\cong} \\
\End_{A^e}(P) \otimes_k (P \otimes_{A^e} P[-d]) \ar[d]^-{\id \otimes (\id \otimes \sigma[-d])}
& P \otimes_{A^e} P[-d] \ar[d]^-{\id \otimes \sigma[-d]} \\
\End_{A^e}(P) \otimes_k (P \otimes_{A^e} \Hom_{A^e}(P,A^e)) \ar[d]^-{\cong} & P \otimes_{A^e} \Hom_{A^e}(P,A^e) \ar[d]^-{\cong} \\
\End_{A^e}(P) \otimes_k \End_{A^e}(P) \ar[r]^-{\comp} & \End_{A^e}(P).
}$$
Note that the left column is just the tensor product of identity and the right column.

Secondly, the evaluation map $\ev : \End_{A^e}(P) \otimes_k (P \otimes_{A^e}P) \rightarrow P \otimes_{A^e}P$
induces the operator $i_f$ by the following commutative diagram:
$$\xymatrix{ \End_{A^e}(P) \otimes (P \otimes_{A^e} P) \ar[r]^-{\ev} \ar[d]^-{\tilde{\mu}_* \otimes \id} & P \otimes_{A^e} P \ar[d]^-{\tilde{\mu} \otimes \id} \\
\Hom_{A^e}(P,A) \otimes_k (P \otimes_{A^e} P) \ar[r]^-{\ev '} & A \otimes_{A^e} P \\
\Hom_{A^e}(P,A) \otimes_k (A \otimes_{A^e} (P \otimes_A P)) \ar[u]_-{\cong} & \\
\Hom_{A^e}(P,A) \otimes_k (A \otimes_{A^e} P) \ar[u]_-{\id \otimes(\id \otimes \Delta)} & \\
\Hom_{K^e}^\pi(BA,A)^\op \otimes_k (A \otimes_{K^e}^\pi BA) \ar[r]^-{\cap} \ar[u]^-{\cong} & A \otimes_{K^e}^\pi BA \ar[uuu]^-{\cong} }$$
where the map
$\ev ' : \Hom_{A^e}(P,A) \otimes_k (P \otimes_{A^e} P) \rightarrow A \otimes_{A^e} P, \ f \otimes (p \otimes p') \mapsto f(p) \otimes p',$
the map $\Delta : P \rightarrow P \otimes_A P, \; 1[a_1|\cdots|a_n]1 \mapsto \sum\limits_{i=0}^n(1[a_1|\cdots|a_i]1)\otimes(1[a_{i+1}|\cdots|a_n]1),$
and the map
$\cap : \Hom_{K^e}^\pi(BA,A) \otimes_k (A \otimes_{K^e}^\pi BA) \rightarrow A \otimes_{K^e}^\pi BA, \ f \otimes a \mapsto i_f(a).$
Note that the map $\tilde{\mu} \otimes \id : P \otimes_{A^e} P \rightarrow A \otimes_{A^e} P$
and the map $A \otimes_{A^e} P \xrightarrow{\id \otimes \Delta} A \otimes_{A^e} (P \otimes_A P) \xrightarrow{\cong} P \otimes_{A^e} P$
are inverse to each other when taking homologies.

Thirdly, the composition map $\comp : \End_{A^e}(P) \otimes_k \End_{A^e}(P) \rightarrow \End_{A^e}(P)$
induces the opposite cup product on $\Hom_{K^e}^\pi(BA,A)$ by the following commutative diagram:
$$\xymatrix{ \Hom_{K^e}^\pi(BA,A) \otimes_k \Hom_{K^e}^\pi(BA,A) \ar[r]^-{\cup^\op} \ar[d]_-{\cong} & \Hom_{K^e}^\pi(BA,A) \\
\Hom_{A^e}(P,A) \otimes_k \Hom_{A^e}(P,A) \ar[d]^-{\varsigma \otimes \varsigma}_-{\cong} & \Hom_{A^e}(P,A) \ar[u]_-{\cong} \\
\End_{A^e}(P) \otimes_k \End_{A^e}(P) \ar[r]^-{\comp} & \End_{A^e}(P). \ar[u]^-{\tilde{\mu}_*}_-{\cong}
}$$
where the map
$\cup^\op : \Hom_{K^e}^\pi(BA,A) \otimes_k \Hom_{K^e}^\pi(BA,A) \rightarrow \Hom_{K^e}^\pi(BA,A),\ f\otimes g \mapsto (-1)^{|f||g|}g\cup f$,
and the map $\varsigma : \Hom_{A^e}(P,A) \stackrel{- \otimes_A P}{\longrightarrow} \Hom_{A^e}(P \otimes_A P, A \otimes_A P)
\stackrel{\Delta^*}{\rightarrow} \Hom_{A^e}(P, A \otimes_A P) \stackrel{\mu^l_*}{\rightarrow} \Hom_{A^e}(P,P)$ is given by
$\varsigma(f) := \mu^l\circ( f \otimes \id)\circ\Delta : P \stackrel{\Delta}{\rightarrow} P \otimes_A P
\stackrel{ f\otimes\id}{\rightarrow} A \otimes_A P \stackrel{\mu^l}{\rightarrow} P$ for all $f \in \Hom_{A^e}(P,A)$.
Note that the map $\tilde{\mu}_* : \End_{A^e}(P) \rightarrow \Hom_{A^e}(P,A)$ and the map $\varsigma : \Hom_{A^e}(P,A) \rightarrow \End_{A^e}(P)$
are inverse to each other when taking homologies due to $\tilde{\mu}_* \circ \varsigma=1$.

Finally, using three commutative diagrams above, by taking cohomologies,
we can obtain the desired commutative diagram in the theorem.
\end{proof}

The Connes operator $B$ on $HH_{\bullet}(A)$ induces an operator of degree $-1$
$$\Delta := -D \circ B \circ D^{-1}$$
on $HH^{\bullet}(A)$ by the following commutative diagram:
$$\xymatrix{ HH_{d-\bullet}(A) \ar[r]^-{-B} \ar[d]_-{D} & HH_{d-\bullet}(A) \ar[d]^-D \\
HH^{\bullet}(A)\ar@{.>}[r]^-{\Delta} & HH^{\bullet}(A). }$$

\begin{lemma}\label{Lemma-CY[f,g]cupDa} {\rm (Ginzburg \cite[Theorem 3.4.3 (ii)]{Gin06} and Abbaspour \cite[Corollary 3.9]{Abb15})}
Let $A$ be a d-CY dg algebra. Then
$$\begin{array}{ll} [f,g] \cup Da = & (-1)^{|f|} \Delta(f\cup g\cup Da) - f\cup\Delta(g\cup Da) \\
& \quad + (-1)^{(|f|+1)(|g|+1)}g\cup\Delta(f\cup Da) + (-1)^{|g|}f\cup g\cup\Delta Da \end{array}$$
for all $f,g \in HH^\bullet(A)$ and $a \in HH_{d-\bullet}(A)$.
\end{lemma}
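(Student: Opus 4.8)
The plan is to transport the entire identity, which a priori lives in the cohomology $HH^\bullet(A)$, across the Van den Bergh isomorphism $D$ into an identity among the calculus operators acting on the single homology class $a$, where it will follow formally from the relations of Lemma~\ref{Lemma-i_f-L_f}.

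First I would invoke the module property of Lemma~\ref{Lemma-VdBDualModMor}, namely $D(i_f a) = f \cup Da$, applied iteratively, to rewrite every cup product occurring on both sides as $D$ of a cap action; thus $g \cup Da = D(i_g a)$, $f \cup g \cup Da = D(i_f i_g a)$, and so on. Simultaneously the defining relation $\Delta = -D \circ B \circ D^{-1}$, equivalently $\Delta \circ D = -D \circ B$, lets me replace each occurrence of $\Delta(\cdots)$ by $-D\circ B$ of the corresponding homology class. Since $D$ is an isomorphism, after stripping off the outer $D$ the asserted equality reduces to the operator identity
\[
i_{[f,g]}\,a = i_f B i_g a - (-1)^{|g|} i_f i_g B a - (-1)^{|f|} B i_f i_g a + (-1)^{(|f|+1)(|g|+1)+1} i_g B i_f a
\]
on $HH_\bullet(A)$.

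Next I would recognize the right-hand side as the expansion of a double graded commutator. By Lemma~\ref{Lemma-i_f-L_f}(5) and (4) we have $i_{[f,g]} = [i_f, L_g] = [i_f, [B, i_g]]$. Expanding, with $[B, i_g] = B i_g - (-1)^{|g|} i_g B$ and $[i_f, X] = i_f X - (-1)^{|f||X|} X i_f$ for $|X| = |B|+|g| = 1+|g|$, produces exactly four terms involving $i_f B i_g$, $i_f i_g B$, $B i_f i_g$ (after using Lemma~\ref{Lemma-i_f-L_f}(1), i.e. $i_f i_g = (-1)^{|f||g|} i_g i_f$, to reorder $B i_g i_f$ into $B i_f i_g$), and $i_g B i_f$. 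It then remains only to match the four Koszul signs with those displayed above; taking the operator degrees $|B|=1$, $|i_f|=|f|$, $|i_g|=|g|$ yields precisely the required signs (for instance the coefficient of $i_g B i_f$ comes out as $(-1)^{|f|(1+|g|)+|g|} = (-1)^{(|f|+1)(|g|+1)+1}$, and that of $B i_f i_g$ as $-(-1)^{|f|}$). This completes the argument.

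The main obstacle is purely the sign bookkeeping: one must fix the operator degrees of $i_f$, $i_g$ and $B$ consistently with the conventions underlying Lemma~\ref{Lemma-i_f-L_f}, and then check that the Koszul signs accumulated in (i) the iterated module property $D(i_f a) = f \cup Da$, (ii) the sign in $\Delta = -D\circ B\circ D^{-1}$, and (iii) the graded Jacobi expansion of $[i_f,[B,i_g]]$ all conspire to reproduce the stated four-term formula. No input beyond the calculus relations of Lemma~\ref{Lemma-i_f-L_f} and the Van den Bergh duality of Lemma~\ref{Lemma-VdBDualModMor} is required.
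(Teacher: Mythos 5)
Your proposal is correct and follows essentially the same route as the paper: the paper likewise expands $i_{[f,g]}=[i_f,L_g]=[i_f,[B,i_g]]$ into the four-term operator identity (using $i_{f\cup g}=i_f\circ i_g$ and the commutativity of the $i$'s to normalize $Bi_gi_f$), and then applies $D$ together with $D(i_fa)=f\cup Da$ and $\Delta=-D\circ B\circ D^{-1}$ to obtain the stated formula. The signs you record, including $(-1)^{(|f|+1)(|g|+1)+1}$ for $i_gBi_f$ and $-(-1)^{|f|}$ for $Bi_fi_g$, agree with the paper's computation.
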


\begin{proof}
Applying Lemma \ref{Lemma-i_f-L_f}, we have
$$\begin{array}{ll}
&i_{[f,g]} \\
=& [i_f, L_g]\\
=& i_f L_g - (-1)^{|f|(|g|+1)} L_g i_f\\
=& i_f [ B ,i_g] - (-1)^{|f|(|g|+1)} [ B ,i_g] i_f\\
=& i_f B i_g - (-1)^{|g|}i_f i_g B
- (-1)^{|f|(|g|+1)} B i_g i_f + (-1)^{|f|(|g|+1)+|g|}i_g B i_f\\
=& (-1)^{|f|+1} B i_{f\cup g} + i_f B i_g
- (-1)^{(|f|+1)(|g|+1)}i_g B i_f - (-1)^{|g|}i_{f \cup g} B.
\end{array}$$

By Lemma \ref{Lemma-VdBDualModMor}, we further have
$$\begin{array}{ll}
& [f,g] \cup Da \\
=& D(i_{[f,g]}a)\\
=& (-1)^{|f|+1}D B i_{f\cup g}a + Di_f B i_ga \\
& \quad - (-1)^{(|f|+1)(|g|+1)}Di_g B i_fa - (-1)^{|g|}Di_{f \cup g} B a\\
=& (-1)^{|f|}\Delta D(i_{f\cup g}a) + f\cup D B i_ga \\
& \quad - (-1)^{(|f|+1)(|g|+1)}g\cup D B i_fa - (-1)^{|g|}f\cup g\cup D B a\\
=& (-1)^{|f|}\Delta (f\cup g\cup Da) - f\cup \Delta D( i_ga) \\
& \quad + (-1)^{(|f|+1)(|g|+1)}g\cup\Delta D(i_fa) + (-1)^{|g|}f\cup g\cup \Delta Da\\
=& (-1)^{|f|}\Delta(f\cup g\cup Da) - f\cup\Delta(g\cup Da) \\
& \quad + (-1)^{(|f|+1)(|g|+1)}g\cup\Delta(f\cup Da) + (-1)^{|g|}f\cup g\cup \Delta Da.
\end{array}$$
\end{proof}

\begin{theorem}\label{Theorem-CYBV} {\rm (Ginzburg \cite[Theorem 3.4.3 (ii)]{Gin06} and Abbaspour \cite[Theorem 3.10]{Abb15})}
Let $A$ be a $d$-CY dg algebra satisfy $\Delta(1_{HH^{\bullet}(A)}) = 0$.
Then $(HH^{\bullet}(A), \cup, \Delta)$ is a BV algebra, i.e.,
$$[f,g] = (-1)^{|f|}\Delta(f\cup g) - (-1)^{|f|}\Delta(f)\cup g - f\cup\Delta(g)$$
for all $f,g \in HH^\bullet(A)$.
\end{theorem}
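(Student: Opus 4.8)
The plan is to reduce the BV identity to the already-established identity of Lemma \ref{Lemma-CY[f,g]cupDa} by specializing the homology class $a$. First I would record the two formal properties of the operator $\Delta := -D \circ B \circ D^{-1}$: it has degree $-1$, since $D : HH_{d-\bullet}(A) \to HH^\bullet(A)$ shifts degree by $d$ and $B$ raises homological degree by $1$, so the composite lowers cohomological degree by $1$; and $\Delta^2 = (-DBD^{-1})(-DBD^{-1}) = D B^2 D^{-1} = 0$ because $B^2 = 0$. Since $(HH^\bullet(A), \cup, [-,-])$ is already a Gerstenhaber algebra by Gerstenhaber's theorem, it then suffices to verify that the Gerstenhaber bracket $[-,-]$ is exactly the deviation of $\Delta$ from being a derivation of $\cup$, i.e. the displayed formula.

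Next I would invoke Lemma \ref{Lemma-CY[f,g]cupDa}, which gives, for all $f,g \in HH^\bullet(A)$ and all $a \in HH_{d-\bullet}(A)$,
\[
[f,g]\cup Da = (-1)^{|f|}\Delta(f\cup g\cup Da) - f\cup\Delta(g\cup Da) + (-1)^{(|f|+1)(|g|+1)}g\cup\Delta(f\cup Da) + (-1)^{|g|}f\cup g\cup\Delta Da.
\]
Because $D : HH_{d-\bullet}(A) \to HH^\bullet(A)$ is an isomorphism (Theorem \ref{Theorem-VdB-Dual}), I may choose $a := D^{-1}(1_{HH^\bullet(A)})$, so that $Da = 1_{HH^\bullet(A)}$. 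With this choice each cup product with $Da$ collapses to the corresponding factor, while the last term carries the factor $\Delta Da = \Delta(1_{HH^\bullet(A)}) = 0$ by hypothesis and therefore vanishes. The identity thus becomes
\[
[f,g] = (-1)^{|f|}\Delta(f\cup g) - f\cup\Delta(g) + (-1)^{(|f|+1)(|g|+1)}g\cup\Delta(f).
\]

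Finally I would reconcile the third term with the target formula by a sign computation, which is the only genuine bookkeeping in the argument. Using graded commutativity of $\cup$ together with $|\Delta(f)| = |f|-1$, one has $g\cup\Delta(f) = (-1)^{|g|(|f|-1)}\Delta(f)\cup g$. Since $(|f|+1)(|g|+1) + |g|(|f|-1) = 2|f||g| + |f| + 1 \equiv |f| + 1 \pmod 2$, the total coefficient is $(-1)^{(|f|+1)(|g|+1)}(-1)^{|g|(|f|-1)} = -(-1)^{|f|}$, so the third term equals $-(-1)^{|f|}\Delta(f)\cup g$. Substituting this back yields exactly
\[
[f,g] = (-1)^{|f|}\Delta(f\cup g) - (-1)^{|f|}\Delta(f)\cup g - f\cup\Delta(g),
\]
as required. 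I expect the conceptual step—that surjectivity of $D$ lets one take $Da$ to be the unit, and that the normalization hypothesis $\Delta(1_{HH^\bullet(A)}) = 0$ kills the remaining term—to be what makes Lemma \ref{Lemma-CY[f,g]cupDa} collapse precisely to the BV relation, while the sign reconciliation in the last step is the main place where care is needed.
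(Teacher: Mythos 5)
Your proposal is correct and follows essentially the same route as the paper: the paper's proof likewise picks $a\in HH_d(A)$ with $Da=1_{HH^\bullet(A)}$ (possible since $D$ is an isomorphism) and lets Lemma \ref{Lemma-CY[f,g]cupDa} collapse to the BV relation using $\Delta(1_{HH^\bullet(A)})=0$. Your explicit verification of $\Delta^2=0$ and the sign reconciliation $(-1)^{(|f|+1)(|g|+1)}g\cup\Delta(f)=-(-1)^{|f|}\Delta(f)\cup g$ are details the paper leaves implicit, and both are carried out correctly.
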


\begin{proof}
Since $D$ is an isomorphism, there exists $a \in HH_d(A)$ such that
$Da = 1 \in HH^0(A)$. Then the theorem  follows from Lemma \ref{Lemma-CY[f,g]cupDa} and the condition $\Delta(1_{HH^{\bullet}(A)})=0.$
\end{proof}

\bigskip

\noindent{\bf BV algebras for (derived) $d$-symmetric dg $K$-rings.}
Let $A$ be a derived $d$-symmetric dg $K$-ring.
It is known that the dg $A$-bimodules morphism $\tilde{\mu}:A\otimes_{\pi}BA\otimes_{\pi}A\xrightarrow{\id
\otimes\varepsilon\otimes \id}A\otimes K \otimes A = A\otimes A\xrightarrow{\mu}A$ is a quasi-isomorphism.
Furthermore, the isomorphism $A\cong A^\vee[-d]$ in $\mathcal{D}A^e$ gives a quasi-isomorphism of dg $A$-bimodules
$\gamma: A\otimes_\pi BA\otimes_{\pi} A\rightarrow A^\vee[-d]$.
Thus we have the following series of quasi-isomorphisms:
$$\begin{array}{lll}
(A\otimes_{K^e}^{\pi} BA)^\vee[-d]&\cong &(A\otimes_{A^e}(A\otimes_\pi BA\otimes_\pi A))^\vee[-d] \\
& \cong & \Hom_{A^e}(A \otimes_\pi BA \otimes_\pi A,A^\vee[-d]) \\
& \xleftarrow{\gamma_*} & \Hom_{A^e}(A\otimes_\pi BA\otimes_\pi A,A\otimes_\pi BA\otimes_\pi A) \\
& \xrightarrow{\tilde{\mu}_*} & \Hom_{A^e}(A\otimes_\pi BA\otimes_\pi A,A)\\
& \cong & \Hom_{K^e}^\pi(BA,A).
\end{array}$$
By taking cohomologies, we obtain a graded $k$-vector space isomorphism
$$D: HH_{\bullet-d}(A)^\vee \rightarrow HH^{\bullet}(A).$$

Indeed, $D$ is a graded $HH^{\bullet}(A)$-module isomorphism.

\begin{lemma} \label{Lemma-DualModMor} {\rm (Abbaspour \cite[Lemma 4.2]{Abb15})}
Let $A$ be a derived $d$-symmetric dg $K$-ring.
Then $D(i_f^\vee \phi) = f \cup D\phi$ for all $f \in HH^\bullet(A)$ and $\phi\in HH_{\bullet-d}(A)^\vee$, i.e.,
the following diagram is commutative:
$$\xymatrix{ HH^\bullet(A)\otimes_k HH_{\bullet-d}(A)^\vee \ar[r]^-{} \ar[d]^-{\id \otimes D} & HH_{\bullet-d}(A)^\vee \ar[d]^-{D}\\
HH^\bullet(A) \otimes_k HH^\bullet(A) \ar[r]^-{\cup} & HH^\bullet(A) }
\xymatrix{ f \otimes \phi \ar@{|->}[r] \ar@{|->}[d] & i_f^\vee \phi \ar@{|->}[d] \\
f \otimes D\phi \ar@{|->}[r] & f \cup D\phi = D(i_f^\vee \phi) }$$
which implies that the map $D : HH_{\bullet-d}(A)^\vee \rightarrow HH^\bullet(A)$ is a graded $HH^\bullet(A)$-module isomorphism.
\end{lemma}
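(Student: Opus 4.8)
The plan is to mimic the proof of Lemma \ref{Lemma-VdBDualModMor}, working throughout on the two-sided bar resolution $P := A \otimes_\pi BA \otimes_\pi A$ and replacing the dualizing bimodule $A^! = \RHom_{A^e}(A,A^e)$ by $A^\vee[-d]$, with the derived-symmetry quasi-isomorphism $\gamma : A \otimes_\pi BA \otimes_\pi A \rightarrow A^\vee[-d]$ now playing the role that $\sigma$ played in the $d$-CY case. First I would recall that, by its construction just before the statement, $D$ is the composite of the displayed quasi-isomorphisms $C_\bullet(A)^\vee[-d] \cong (A \otimes_{A^e} P)^\vee[-d] \cong \Hom_{A^e}(P, A^\vee[-d]) \xleftarrow{\gamma_*} \End_{A^e}(P) \xrightarrow{\tilde\mu_*} \Hom_{A^e}(P, A) \cong C^\bullet(A)$, where the identification $(A \otimes_{A^e} P)^\vee \cong \Hom_{A^e}(P, A^\vee)$ is the standard $\Hom$-tensor adjunction for dg $A$-bimodules and requires no finiteness hypothesis.

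Next I would reuse, essentially verbatim, the second and third commutative diagrams from the proof of Lemma \ref{Lemma-VdBDualModMor}. The second diagram shows that the evaluation map $\ev'$ induces the cap product $i_f$ on $A \otimes^\pi_{K^e} BA \cong A \otimes_{A^e} P$ through $\tilde\mu$, the diagonal $\Delta : P \to P \otimes_A P$, and the pairing $\cap$; the third shows that the composition map $\comp$ on $\End_{A^e}(P)$ induces the (opposite) cup product on $\Hom^\pi_{K^e}(BA,A)$ via $\tilde\mu_*$ and its homotopy inverse $\varsigma$, where $\varsigma(f) = \mu^l \circ (f \otimes \id) \circ \Delta$ and $\tilde\mu_* \circ \varsigma = 1$. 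Since neither diagram mentions the dualizing object, both transfer to the present setting unchanged.

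The essential new step is the dualization. Taking graded $k$-duals of the evaluation diagram and shifting by $-d$, I would show that the operator $i_f^\vee$ on $C_\bullet(A)^\vee = (A \otimes_{A^e} P)^\vee$, transported through the adjunction isomorphism $(A \otimes_{A^e} P)^\vee \cong \Hom_{A^e}(P, A^\vee)$ and then through $\gamma_*$, coincides with left-composition by the endomorphism $\varsigma(f) \in \End_{A^e}(P)$ corresponding to $f$. This is precisely the dual incarnation of the first commutative diagram of Lemma \ref{Lemma-VdBDualModMor}: where that diagram used $\sigma$ to turn the intrinsic evaluation pairing on $A^!$ into $\comp$, here $\gamma$ supplies the quasi-isomorphism $P \otimes_{A^e} A^\vee[-d] \cong \End_{A^e}(P)$ that converts the dual evaluation into composition. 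Feeding this into the cup-product diagram and passing to cohomology then yields $D(i_f^\vee \phi) = f \cup D\phi$.

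The main obstacle will be exactly this dualization: controlling the Koszul signs introduced by passing to $k$-duals and by the degree-$d$ shift, and verifying that $\gamma_*$ intertwines the dual cap action $i_f^\vee$ with composition by $\varsigma(f)$ in $\End_{A^e}(P)$. Where the $d$-CY proof could exploit the intrinsic evaluation pairing on $A^! = \RHom_{A^e}(A,A^e)$, here the corresponding pairing must instead be produced from the adjunction $(A \otimes_{A^e} P)^\vee \cong \Hom_{A^e}(P, A^\vee)$ together with $\gamma$; checking that these fit together compatibly, and in particular that the opposite cup product arising from $\comp$ dualizes back to the genuine cup product $f \cup -$ on $HH^\bullet(A)$, is the delicate part. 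Once the diagrams are assembled and cohomology is taken, the module-isomorphism assertion for $D$ is immediate, since $D$ is already known to be a graded $k$-vector space isomorphism.
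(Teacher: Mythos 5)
Your proposal follows essentially the same route as the paper: dualize the evaluation/cap-product diagram from the proof of Lemma \ref{Lemma-VdBDualModMor}, keep the $\comp$/opposite-cup diagram unchanged, and replace the Calabi--Yau structure map $\sigma$ by the quasi-isomorphism built from $\gamma$ (the paper packages this as $\kappa=\tilde{\mu}^\vee[-d]\circ\gamma: P\rightarrow P^\vee[-d]$ together with the pairings $\alpha$ on $(P\otimes_{A^e}P)^\vee$ and $\beta$ on $\Hom_{A^e}(P,P^\vee[-d])$, which is precisely the ``dual evaluation'' you describe). The obstacles you flag --- the sign bookkeeping under dualization and the compatibility of $\gamma_*$ with the dual cap action --- are exactly what the paper's first and second commutative diagrams verify, so the plan is sound and matches the paper's argument.
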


\begin{proof}
Let $P = A \otimes_\pi BA \otimes_\pi A$ be the two-sided bar resolution of $A$ and
$\tilde{\mu} : P=A \otimes_\pi BA \otimes_\pi A \xrightarrow{\id \otimes \varepsilon \otimes \id} A \otimes K \otimes A = A \otimes A \xrightarrow{\mu} A$.
Then the dg $A$-bimodule morphism $A^\vee[-d]\xrightarrow{\tilde{\mu}^\vee[-d]} P^\vee[-d]$ is a quasi-isomorphism.
The isomorphism $A \cong A^\vee[-d]$ in $\mathcal{D}A^e$ provides a dg $A$-bimodule quasi-isomorphism $\gamma : P \rightarrow A^\vee[-d]$.
Let $\kappa$ be the composition $ P\xrightarrow{\gamma} A^\vee[-d]\xrightarrow{\tilde{\mu}^\vee[-d]} P^\vee[-d]$.
Then $\kappa$ is a quasi-isomorphism of dg $A$-bimodules.

Firstly, the map $\alpha : (P \otimes_{A^e}P)^\vee \otimes_k \End_{A^e}(P) \rightarrow (P \otimes_{A^e}P)^\vee$
given by $\alpha(\phi \otimes f)(p \otimes p') := \phi(f(p) \otimes p')$,
and the map $\beta : \Hom_{A^e}(P, P^\vee[-d]) \otimes_k \End_{A^e}(P) \rightarrow \Hom_{A^e}(P, P^\vee[-d])$
given by $\beta(\psi \otimes f)(p) := \psi(f(p))$,
satisfy the following commutative diagram:
$$\xymatrix{ (P \otimes_{A^e} P)^\vee[-d] \otimes_k \End_{A^e}(P) \ar[r]^-{\alpha[d]} \ar[d]^-{\cong} & (P \otimes_{A^e} P)^\vee[-d] \ar[d]^-{\cong} \\
\Hom_{A^e}(P, P^\vee[-d]) \otimes_k \End_{A^e}(P) \ar[r]^-{\beta} & \Hom_{A^e}(P, P^\vee[-d]) \\
\End_{A^e}(P) \otimes_k \End_{A^e}(P) \ar[r]^-{\comp} \ar[u]_-{\kappa_* \otimes \id} & \End_{A^e}(P). \ar[u]_-{\kappa_*}
}$$

Secondly, the map $\alpha : (P \otimes_{A^e}P)^\vee \otimes_k \End_{A^e}(P) \rightarrow (P \otimes_{A^e}P)^\vee$
induces the map $i_f^\vee$ by the following commutative diagram:
{\footnotesize $$\xymatrix{ (P \otimes_{A^e} P)^\vee \otimes_k \End_{A^e}(P) \ar[r]^-{\alpha} & (P \otimes_{A^e} P)^\vee \\
(A \otimes_{A^e} P)^\vee \otimes_k \End_{A^e}(P) \ar[u]_-{(\tilde{\mu} \otimes \id)^\vee \otimes \id} \ar[d]^-{\id \otimes\tilde{\mu}_*} & \\
(A \otimes_{A^e} P)^\vee \otimes_k \Hom_{A^e}(P,A) \ar[r]^-{\alpha'} \ar[ddd]^-{\cong} & (P \otimes_{A^e} P)^\vee \ar@{=}[uu] \ar[d]^-{\cong}\\
& (A \otimes_{A^e} (P \otimes_A P))^\vee \ar[d]^-{(\id \otimes \Delta)^\vee} \\
& (A \otimes_{A^e} P)^\vee \ar[d]^-{\cong} \\
(A \otimes_{K^e}^\pi BA)^\vee \otimes_k \Hom_{K^e}^\pi(BA,A)^{\op} \ar[r]^-{\cap'} & (A \otimes_{K^e}^\pi BA)^\vee }$$}
where the map $\alpha' : (A \otimes_{A^e} P)^\vee \otimes_k \Hom_{A^e}(P,A) \rightarrow (P \otimes_{A^e} P)^\vee$
is given by $\alpha'(\phi \otimes f)(p \otimes p') := \phi(fp \otimes p')$,
the map $\Delta : P \rightarrow P \otimes_A P, \; 1[a_1|\cdots|a_n]1 \mapsto \sum\limits_{i=0}^n(1[a_1|\cdots|a_i]1)\otimes(1[a_{i+1}|\cdots|a_n]1)$,
and the map $\cap' : (A \otimes_{K^e}^\pi BA)^\vee \otimes_k \linebreak \Hom_{K^e}^\pi(BA,A)^{\op}
\rightarrow (A \otimes_{K^e}^\pi BA)^\vee, \ \phi \otimes f\mapsto \phi i_f$.
In fact, the commutative diagram above is dual to the second one in the proof of Lemma \ref{Lemma-VdBDualModMor}.

Thirdly, the composition map $\comp : \End_{A^e}(P) \otimes_k \End_{A^e}(P) \rightarrow \End_{A^e}(P)$
induces the opposite cup product on $\Hom_{K^e}^\pi(BA,A)$ which has been mentioned in Lemma \ref{Lemma-VdBDualModMor}.

Finally, using two commutative diagrams above and the third commutative diagram in the proof of Lemma \ref{Lemma-VdBDualModMor}, by taking cohomologies,
we can obtain the desired commutative diagram in the theorem.
\end{proof}

The Connes operator $B$ on $HH_\bullet(A)$ induces an operator of degree $-1$
$$\Delta := -D \circ B ^\vee\circ D^{-1}$$
on $HH^{\bullet}(A)$ by the following commutative diagram:
$$\xymatrix{ HH_{\bullet-d}(A)^\vee \ar[r]^-{-B^\vee} \ar[d]_{D} & HH_{\bullet-d}(A)^\vee \ar[d]^-D\\
HH^{\bullet}(A) \ar@{.>}[r]^-{\Delta} & HH^{\bullet}(A). }$$

\begin{lemma} \label{Lemma-Sym[f,g]cupDphi} {\rm (Abbaspour \cite[Corollary 4.3]{Abb15})}
Let $A$ be a derived $d$-symmetric dg $K$-ring. Then
$$\begin{array}{ll} [f,g] \cup D\phi = & (-1)^{|f|}\Delta(f \cup g \cup D \phi) - f \cup \Delta(g \cup D \phi) \\
& \quad + (-1)^{(|f|+1)(|g|+1)}g\cup\Delta(f\cup D\phi) + (-1)^{|g|}f\cup g\cup \Delta D\phi \end{array}$$
for all $f,g \in HH^\bullet(A)$ and $\phi \in HH_{\bullet-d}(A)^\vee$.
\end{lemma}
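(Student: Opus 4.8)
The plan is to mirror the proof of Lemma \ref{Lemma-CY[f,g]cupDa} almost verbatim, replacing the Calabi--Yau ingredients by their $d$-symmetric counterparts: the module isomorphism $D(i_fa)=f\cup Da$ of Lemma \ref{Lemma-VdBDualModMor} is replaced by $D(i_f^\vee\phi)=f\cup D\phi$ of Lemma \ref{Lemma-DualModMor}, and the operator $\Delta=-D\circ B\circ D^{-1}$ is replaced by $\Delta=-D\circ B^\vee\circ D^{-1}$. First I would apply Lemma \ref{Lemma-DualModMor} with $f$ replaced by $[f,g]$ to rewrite the left-hand side as
$$[f,g]\cup D\phi = D(i_{[f,g]}^\vee\phi),$$
so that the whole problem reduces to expressing the operator $i_{[f,g]}^\vee$ on $HH_{\bullet-d}(A)^\vee$ in terms of $B^\vee$ and the operators $i_\bullet^\vee$.

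The key step is to start from the operator identity already available on $HH_\bullet(A)$. By Lemma \ref{Lemma-i_f-L_f}(5) and (4) one has $i_{[f,g]}=[i_f,L_g]=[i_f,[B,i_g]]$, and expanding exactly as in the proof of Lemma \ref{Lemma-CY[f,g]cupDa} gives
$$i_{[f,g]}=(-1)^{|f|+1}B\,i_{f\cup g}+i_f\,B\,i_g-(-1)^{(|f|+1)(|g|+1)}i_g\,B\,i_f-(-1)^{|g|}i_{f\cup g}\,B.$$
Taking graded $k$-duals of this identity --- which reverses the order of each composition and introduces the appropriate Koszul signs --- yields a corresponding expression for $i_{[f,g]}^\vee$ as a sum of compositions of $B^\vee$ with the $i_\bullet^\vee$. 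I would then apply $D$ and move it past each factor using Lemma \ref{Lemma-DualModMor} in the form $D(i_h^\vee\psi)=h\cup D\psi$ together with the defining relation $D\circ B^\vee=-\Delta\circ D$, so that each $i_h^\vee$ becomes a cup product with $h$ and each $B^\vee$ becomes a $-\Delta$. Collecting the four resulting monomials reproduces the claimed formula.

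The hard part will be the sign bookkeeping in the dualization step: one must check that reversing the compositions, together with the Koszul signs produced by $(-)^\vee$ and by commuting $D$ past the cup-product and $\Delta$ factors, reassembles precisely into the signs displayed in the statement. Since this is exactly the $k$-dual of the computation already carried out in Lemma \ref{Lemma-CY[f,g]cupDa}, I expect the signs to match term by term; nevertheless it is the only place where an error could hide, so I would verify it by comparing each of the four monomials with its Calabi--Yau analogue.
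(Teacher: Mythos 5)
Your proposal follows the paper's proof essentially verbatim: the paper likewise recalls the expansion of $i_{[f,g]}$ from the proof of Lemma \ref{Lemma-CY[f,g]cupDa}, dualizes it to an identity for $i_{[f,g]}^\vee$, and then pushes $D$ through each term using Lemma \ref{Lemma-DualModMor} together with $D\circ B^\vee=-\Delta\circ D$. The only discrepancy is cosmetic: in the paper's convention the dualized identity keeps the factors in the same left-to-right order with the same signs, so the order-reversal and Koszul-sign bookkeeping you flag as the delicate step is even lighter than you anticipate.
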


\begin{proof}
By the proof of Lemma \ref{Lemma-CY[f,g]cupDa}, we obtain
$$i_{[f,g]} = (-1)^{|f|+1} B i_{f\cup g} + i_f B i_g - (-1)^{(|f|+1)(|g|+1)}i_g B i_f - (-1)^{|g|}i_{f \cup g} B$$
for all $f,g \in HH^\bullet(A)$. Thus
$$i_{[f,g]}^\vee = (-1)^{|f|+1}B^\vee i_{f \cup g}^\vee + i_f^\vee B^\vee i_g^\vee -
(-1)^{(|f|+1)(|g|+1)}i_g^\vee B^\vee i_f^\vee - (-1)^{|g|} i_{f \cup g}^\vee B^\vee$$
for all $f,g \in HH^\bullet(A)$. It follows from Lemma \ref{Lemma-DualModMor} that
$$\begin{array}{ll}
& [f,g] \cup D\phi \\
=& D(i_{[f,g]}^\vee\phi)\\
=& (-1)^{|f|+1}D B^\vee i_{f\cup g}^\vee\phi + Di_f^\vee B^\vee i_g^\vee\phi \\
& \quad - (-1)^{(|f|+1)(|g|+1)}Di_g^\vee B^\vee i_f^\vee\phi - (-1)^{|g|}Di_{f \cup g}^\vee B^\vee \phi \\
=& (-1)^{|f|}\Delta D(i_{f\cup g}^\vee\phi) + f\cup D B^\vee i_g^\vee\phi \\
& \quad - (-1)^{(|f|+1)(|g|+1)}g\cup D B^\vee i_f^\vee\phi - (-1)^{|g|}f\cup g\cup D B^\vee \phi\\
=& (-1)^{|f|}\Delta (f\cup g\cup D\phi) - f\cup \Delta D( i_g^\vee\phi) \\
& \quad + (-1)^{(|f|+1)(|g|+1)}g\cup\Delta D(i_f^\vee\phi) + (-1)^{|g|}f\cup g\cup \Delta D\phi\\
=& (-1)^{|f|}\Delta(f\cup g\cup D\phi) - f\cup\Delta(g\cup D\phi) \\
& \quad + (-1)^{(|f|+1)(|g|+1)}g\cup\Delta(f\cup D\phi) + (-1)^{|g|}f\cup g\cup \Delta D\phi.
\end{array}$$
\qedhere
\end{proof}

For derived $d$-symmetric dg $K$-rings, we have the following general result:

\begin{theorem} \label{Theorem-DerSymBV}  {\rm(Abbaspour\cite[Theorem 4.6]{Abb15})}
Let $A$ be a derived $d$-symmetric dg $K$-ring satisfying $\Delta(1_{HH^\bullet(A)})=0$.
Then $(HH^{\bullet}(A), \cup, \Delta)$ is a BV algebra, i.e.,
$$[f,g] = (-1)^{|f|}\Delta(f\cup g) - (-1)^{|f|}\Delta(f)\cup g-f\cup \Delta(g)$$
for all $f,g \in HH^{\bullet}(A)$.
\end{theorem}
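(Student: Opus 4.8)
The plan is to follow exactly the pattern of the proof of Theorem \ref{Theorem-CYBV} for $d$-CY dg algebras, replacing the Van den Bergh duality of Lemma \ref{Lemma-VdBDualModMor} with its dual-module counterpart $D : HH_{\bullet-d}(A)^\vee \to HH^\bullet(A)$ from Lemma \ref{Lemma-DualModMor}. The essential input is the master identity of Lemma \ref{Lemma-Sym[f,g]cupDphi}, which expresses $[f,g]\cup D\phi$ in terms of $\Delta$ applied to cup products involving $D\phi$. The entire proof then reduces to evaluating that identity at a single distinguished class, so there is no new homological content to supply here.

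First I would use that $D$ is an isomorphism of graded $HH^\bullet(A)$-modules to produce a class $\phi$ in the appropriate component $HH_{-d}(A)^\vee$ with $D\phi = 1_{HH^\bullet(A)}$; surjectivity of $D$ guarantees its existence. Substituting $D\phi = 1$ into the identity of Lemma \ref{Lemma-Sym[f,g]cupDphi} and using that $1$ is the unit for the cup product (so $x \cup 1 = x$ for every $x$), the four terms collapse to
$$[f,g] = (-1)^{|f|}\Delta(f\cup g) - f\cup\Delta(g) + (-1)^{(|f|+1)(|g|+1)}g\cup\Delta(f) + (-1)^{|g|}f\cup g\cup \Delta(1).$$
The hypothesis $\Delta(1_{HH^\bullet(A)}) = 0$ then annihilates the final term, leaving three summands.

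It remains to reconcile these surviving terms with the stated BV identity. Here I would invoke the graded commutativity of $(HH^\bullet(A),\cup)$ together with $|\Delta(f)| = |f|-1$ to rewrite $g\cup\Delta(f) = (-1)^{|g|(|f|-1)}\Delta(f)\cup g$; computing the combined sign exponent $(|f|+1)(|g|+1)+|g|(|f|-1) \equiv |f|+1 \pmod 2$ yields $(-1)^{(|f|+1)(|g|+1)}g\cup\Delta(f) = -(-1)^{|f|}\Delta(f)\cup g$, which is precisely the third term of the target formula $[f,g] = (-1)^{|f|}\Delta(f\cup g) - (-1)^{|f|}\Delta(f)\cup g - f\cup\Delta(g)$.

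The only genuinely delicate point is this final sign bookkeeping, since everything else is a direct specialization of an already proven identity followed by an application of the hypothesis. In particular, the main obstacle is not conceptual but computational: one must be careful that the degree of $\Delta(f)$ is $|f|-1$ (not $|f|$) when applying graded commutativity, as this shift is exactly what converts the symmetric-looking third term into the asymmetric form demanded by the Gerstenhaber/BV compatibility.
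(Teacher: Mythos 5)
Your proposal is correct and follows exactly the paper's argument: choose $\phi$ with $D\phi=1_{HH^\bullet(A)}$ via the isomorphism $D$ of Lemma \ref{Lemma-DualModMor}, specialize Lemma \ref{Lemma-Sym[f,g]cupDphi} to this $\phi$, and use $\Delta(1)=0$. The sign computation $(|f|+1)(|g|+1)+|g|(|f|-1)\equiv|f|+1\pmod 2$ that you carry out explicitly (and which the paper leaves implicit) is accurate.
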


\begin{proof}
Let $\phi\in HH_{-d}(A)^\vee$ be the pre-image of $1\in HH^0(A)$ under the isomorphism $D$.
Then $D\phi=1$ and $\Delta D\phi=\Delta(1)=0$.
Thus the theorem follows from Lemma \ref{Lemma-Sym[f,g]cupDphi}.
\end{proof}

For $d$-symmetric dg $K$-rings, we have the following cleaner result:

\begin{theorem} \label{Theorem-SymBV} {\rm (Menichi \cite[Theorem 18]{Men09} and Abbaspour \cite[Theorem 4.7]{Abb15})}
Let $A$ be a $d$-symmetric dg $K$-ring.
Then $(HH^{\bullet}(A), \cup, \Delta)$ is a $BV$ algebra.
\end{theorem}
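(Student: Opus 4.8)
The plan is to deduce Theorem~\ref{Theorem-SymBV} from Theorem~\ref{Theorem-DerSymBV} by verifying its one extra hypothesis, namely $\Delta(1_{HH^\bullet(A)}) = 0$. First I would observe that a $d$-symmetric dg $K$-ring is in particular derived $d$-symmetric: the given dg $A$-bimodule quasi-isomorphism $\theta : A \to A^\vee[-d]$ is an isomorphism in $\mathcal{D}A^e$, so the whole machinery of the previous subsection applies, built from this very $\theta$. In particular the duality $D : HH_{\bullet-d}(A)^\vee \to HH^\bullet(A)$ and the operator $\Delta = -D \circ B^\vee \circ D^{-1}$ are in place. By Theorem~\ref{Theorem-DerSymBV} it then remains only to show $B^\vee(D^{-1}(1)) = 0$, equivalently that the functional $D^{-1}(1)$ vanishes on the image of the Connes operator $B$.

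Next I would extract a symmetrizing trace from $\theta$. Setting $\omega := \theta(1) \in A^\vee$, the bimodule property $a\theta(1) = \theta(a) = \theta(1)a$ forces $a\omega = \omega a$ in $A^\vee$, which unwinds to the graded cyclicity $\omega(ab) = (-1)^{|a||b|}\omega(ba)$, while $\theta$ being a chain map of degree $-d$ gives $\omega \circ d_A = 0$. I would then consider the functional $\bar{\omega} \in C_\bullet(A)^\vee$ supported in weight zero, defined by $\bar{\omega}(a_0 \otimes [\,]) := \omega(a_0)$ on $C_0(A) = A$ and $\bar{\omega} := 0$ on $C_n(A)$ for all $n \geq 1$. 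The identities above are exactly what make $\bar{\omega}$ a cocycle: the only contribution of $b = b_0 + b_1$ landing in weight zero is the pair of external terms $a_0a_1 \otimes [\,]$ and $a_1a_0 \otimes [\,]$ produced by $b_1(a_0 \otimes [a_1])$, and these cancel by the graded cyclicity of $\omega$.

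I would then identify $[\bar{\omega}]$ with $D^{-1}(1)$ by chasing the quasi-isomorphisms defining $D$. The unit $1 \in HH^0(A)$ is represented by $\tilde{\mu} \in \Hom_{A^e}(P,A)$ with $P = A \otimes_\pi BA \otimes_\pi A$; its preimage under $\tilde{\mu}_*$ is $\id_P \in \End_{A^e}(P)$; and pushing $\id_P$ through $\gamma_*$, where $\gamma : P \to A^\vee[-d]$ is the quasi-isomorphism induced by $\theta$, followed by the adjunction $\Hom_{A^e}(P,A^\vee[-d]) \cong (A \otimes^\pi_{K^e} BA)^\vee[-d]$, returns precisely the weight-zero functional given by $\omega$, i.e. $\bar{\omega}$. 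Finally, since $B$ raises the bar-length, sending $C_n(A)$ into $1 \otimes (s\overline{A})^{\otimes n+1} \subseteq C_{n+1}(A)$, its image meets only weights $\geq 1$, where $\bar{\omega}$ is zero; hence $\bar{\omega} \circ B = 0$ already at the chain level, so $B^\vee(D^{-1}(1)) = [\bar{\omega} \circ B] = 0$ and $\Delta(1_{HH^\bullet(A)}) = 0$. Theorem~\ref{Theorem-DerSymBV} then completes the proof.

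The main obstacle I anticipate is the bookkeeping in the identification $[\bar{\omega}] = D^{-1}(1)$: one must trace the unit through several adjunction isomorphisms and two distinct bar resolutions, keeping the shifts by $[-d]$ and the Koszul signs consistent, and confirm that the weight-zero support of $\bar{\omega}$ survives throughout. Once this is secured, the vanishing $\bar{\omega} \circ B = 0$ is immediate from the weight reasons above, which is precisely why genuine (rather than merely derived) symmetry eliminates the extra hypothesis of Theorem~\ref{Theorem-DerSymBV}.
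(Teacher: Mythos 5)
Your proposal is correct and follows essentially the same route as the paper: the paper computes the representative $\sigma=\varrho(\eta_A\varepsilon_{BA})$ of $D^{-1}(1)$ directly, observes it is supported in bar-weight zero (since $\varepsilon_{BA}$ kills $\overline{BA}$), and concludes $\sigma\circ B=0$ because $\Im B\subseteq A\otimes_\pi\overline{BA}$ — exactly your weight argument with $\bar{\omega}$. The only cosmetic difference is that you verify the cocycle condition for $\bar{\omega}$ by hand via graded cyclicity of $\omega=\theta(1)$, whereas the paper gets it for free from $\eta_A\varepsilon_{BA}$ being the convolution unit and $\varrho$ a chain map.
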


\begin{proof}
Let $\kappa : A \rightarrow A^\vee[-d]$ be a dg $A$-bimodule quasi-isomorphism.
Then the graded $k$-module isomorphism $D^{-1}=H^\bullet(\varrho)$ where $\varrho$ is the composition
$\Hom_{K^e}^\pi(BA,A) \xrightarrow{\cong} \Hom_{A^e}(A \otimes_\pi BA \otimes_\pi A, A)
\xrightarrow{\kappa_*} \Hom_{A^e}(A \otimes_\pi BA \otimes_\pi A, A^\vee[-d]) \linebreak \xrightarrow{\cong}
(A[d] \otimes_{A^e}(A \otimes_\pi BA \otimes_\pi A))^\vee \xrightarrow{\cong} (A[d] \otimes^\pi_{K^e}BA)^\vee.$
Let $\sigma = \varrho(\eta_A\varepsilon_{BA})$.
Then $\sigma(s^da \otimes b)=(-1)^{|a||b|}(s^d((\kappa\circ\eta_A\circ\varepsilon_{BA})(b)))(a) \in k$ for all $b\in BA$ and $a\in A$.
Since $\eta_A\varepsilon_{BA}$ is the unit of $\Hom_{K^e}^\pi(BA,A)$, it is a 0-cocycle.
Thus $\sigma$ is a 0-cocycle of $(A[d] \otimes^\pi_{K^e}BA)^\vee$ and $D[\sigma] = 1_{HH^\bullet(A)}$ when passing to cohomologies.
Note that the image of the Connes operator $B$, $\Im B \subseteq A\otimes_\pi \overline{BA}$.
Thus $B^\vee \sigma = \sigma B=0$, and further $B^\vee [\sigma] =0$ and $\Delta(1_{HH^\bullet(A)})=\Delta(D[\sigma])=-D(B^\vee [\sigma])=0$.
Then the theorem follows from Lemma \ref{Lemma-Sym[f,g]cupDphi} or Theorem \ref{Theorem-DerSymBV}.
\end{proof}

\begin{remark}{\rm
If $A$ is a finite dimensional derived $d$-symmetric dg $K$-ring,
we may replace the quasi-isomorphisms $\Hom_{A^e}(A \otimes_\pi BA \otimes_\pi A,A^\vee[-d])
\xleftarrow{\gamma_*} \Hom_{A^e}(A\otimes_\pi BA\otimes_\pi A,A\otimes_\pi BA\otimes_\pi A)
\xrightarrow{\tilde{\mu}_*} \Hom_{A^e}(A\otimes_\pi BA\otimes_\pi A,A)$
in the definition of the isomorphism $D: HH_{\bullet-d}(A)^\vee \rightarrow HH^{\bullet}(A)$
with the quasi-isomorphisms
$\Hom_{A^e}(A \otimes_\pi BA \otimes_\pi A,A^\vee[-d]) \xrightarrow{(\tilde{\mu}^\vee[-d])_*}
\Hom_{A^e}(A\otimes_\pi BA\otimes_\pi A,(A\otimes_\pi BA\otimes_\pi A)^\vee[-d])
\xleftarrow{(\gamma^\vee[-d])_*} \Hom_{A^e}(A\otimes_\pi BA\otimes_\pi A,A)$,
and correspondingly replace the dg $A$-bimodule quasi-isomorphism 
$\kappa=\tilde{\mu}^\vee[-d]\circ\gamma: P\xrightarrow{\gamma} A^\vee[-d]\xrightarrow{\tilde{\mu}^\vee[-d]} P^\vee[-d]$
in the proof of Lemma \ref{Lemma-DualModMor}
with its dual $\kappa=\gamma^\vee[-d]\circ\tilde{\mu}: P\xrightarrow{\tilde{\mu}} A=(A^\vee[-d])^\vee[-d]\xrightarrow{\gamma^\vee[-d]} P^\vee[-d]$.
After replacing, Lemma \ref{Lemma-DualModMor}, Lemma \ref{Lemma-Sym[f,g]cupDphi},
Theorem \ref{Theorem-DerSymBV} and Theorem \ref{Theorem-SymBV} still hold.
Henceforth, we will do these replacements so that the CY structure of $\Omega A^\vee$
considered in the following is just that given in Theorem \ref{Theorem-Sym-Dual-CY}.
}\end{remark}

For derived $d$-symmetric dg $K$-rings and their Koszul duals, we have the following general result:

\begin{theorem} \label{Theorem-DerSymCY-BVIso}
Let $A$ be a finite dimensional complete typical derived $d$-symmetric dg $K$-ring satisfying $\Delta(1_{HH^{\bullet}(A)})=0$. Then

{\rm (1)} $(HH^\bullet(\Omega A^\vee), \cup, \Delta)$ is a BV algebra;

{\rm (2)} $(HH^\bullet(\Omega A^\vee), \cup, \Delta)$ and $(HH^{\bullet}(A), \cup, \Delta)$ are isomorphic as BV algebras.
\end{theorem}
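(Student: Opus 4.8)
The plan is to deduce both assertions from the comparison isomorphisms of Theorem \ref{Theorem-HHHH_Connes} together with the Calabi--Yau structure produced in Theorem \ref{Theorem-Sym-Dual-CY}. Since $A$ is a finite dimensional complete typical derived $d$-symmetric dg $K$-ring, Theorem \ref{Theorem-Sym-Dual-CY} shows that $\Omega A^\vee$ is a $d$-CY dg algebra; moreover, by the remark preceding this theorem, its Calabi--Yau structure is exactly the one induced by the symmetric structure of $A$. Write $D_A : HH_{\bullet-d}(A)^\vee \to HH^\bullet(A)$ for the Van den Bergh dual attached to the derived $d$-symmetry of $A$, and $D_\Omega : HH_{d-\bullet}(\Omega A^\vee) \to HH^\bullet(\Omega A^\vee)$ for the one attached to the Calabi--Yau structure of $\Omega A^\vee$. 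The associated BV operators are $\Delta_A = -D_A \circ B^\vee \circ D_A^{-1}$ and $\Delta_\Omega = -D_\Omega \circ B \circ D_\Omega^{-1}$.

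The heart of the argument is the compatibility of the two Van den Bergh duals with the comparison isomorphisms $h^\bullet : HH^\bullet(A) \to HH^\bullet(\Omega A^\vee)$ and $h_\bullet : HH_\bullet(A)^\vee \to HH_{-\bullet}(\Omega A^\vee)$ of Theorem \ref{Theorem-HHHH_Connes}, namely the identity
$$h^\bullet \circ D_A = D_\Omega \circ h_\bullet.$$
The degrees match, since in degree $n$ the map $D_A$ sends $HH_{n-d}(A)^\vee$ to $HH^n(A)$, while $h_\bullet$ sends it to $HH_{d-n}(\Omega A^\vee)$, on which $D_\Omega$ lands in $HH^n(\Omega A^\vee)$. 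I would prove this at the chain level by unwinding both duals: each is a composite of the evaluation, cap-product and graded-dual maps together with the duals $\gamma^\vee,\tilde{\mu}^\vee$ of the quasi-isomorphisms $\gamma,\tilde{\mu}$ implementing the symmetric structure, exactly as in the proofs of Lemma \ref{Lemma-VdBDualModMor} and Lemma \ref{Lemma-DualModMor}. The comparison maps $\theta$ and $\vartheta$ built from $\tilde{\nu},\tilde{\rho},\psi$ and $\omega$ in the proof of Theorem \ref{Theorem-HHHH_Connes} intertwine these composites precisely because the CY structure of $\Omega A^\vee$ is obtained from the very same $\gamma$ and $\tilde{\mu}$ (dualized) via Theorem \ref{Theorem-Sym-Dual-CY}; so the required identity reduces to a diagram chase over the commuting squares already established there.

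Granting this identity, the BV operators are intertwined. From $h^\bullet \circ D_A = D_\Omega \circ h_\bullet$ one gets $D_\Omega^{-1}\circ h^\bullet = h_\bullet \circ D_A^{-1}$, and combining this with the Connes compatibility $B \circ h_\bullet = h_\bullet \circ B^\vee$ of Theorem \ref{Theorem-HHHH_Connes}(2) and once more with $D_\Omega \circ h_\bullet = h^\bullet \circ D_A$, one computes
$$\Delta_\Omega \circ h^\bullet = -D_\Omega \circ B \circ h_\bullet \circ D_A^{-1} = -D_\Omega \circ h_\bullet \circ B^\vee \circ D_A^{-1} = -h^\bullet \circ D_A \circ B^\vee \circ D_A^{-1} = h^\bullet \circ \Delta_A.$$
Because $h^\bullet$ is an isomorphism of graded commutative associative algebras it preserves units, whence $\Delta_\Omega(1_{HH^\bullet(\Omega A^\vee)}) = h^\bullet(\Delta_A(1_{HH^\bullet(A)})) = h^\bullet(0) = 0$, using the hypothesis $\Delta_A(1)=0$. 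Theorem \ref{Theorem-CYBV} then yields assertion (1), that $(HH^\bullet(\Omega A^\vee),\cup,\Delta)$ is a BV algebra. For assertion (2), $h^\bullet$ is an algebra isomorphism satisfying $h^\bullet \circ \Delta_A = \Delta_\Omega \circ h^\bullet$, which is precisely a BV algebra isomorphism from $(HH^\bullet(A),\cup,\Delta)$, a BV algebra by Theorem \ref{Theorem-DerSymBV}, onto $(HH^\bullet(\Omega A^\vee),\cup,\Delta)$.

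The main obstacle is the chain-level verification of $h^\bullet \circ D_A = D_\Omega \circ h_\bullet$: one must track the explicit (co)bar formulas for $\theta,\vartheta,D_A$ and $D_\Omega$ and check that the evaluation, composition, cap-product and graded-dual squares assemble coherently, keeping careful control of the Koszul signs and of the degree shift by $d$. Everything else is a formal consequence of the definitions of $\Delta_A$ and $\Delta_\Omega$ and of the two comparison isomorphisms.
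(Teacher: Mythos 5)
Your proposal is correct and follows essentially the same route as the paper: the paper also reduces everything to the compatibility $h^\bullet\circ D_A=D_\Omega\circ h_\bullet$ (the left and right faces of a cubic diagram whose top, bottom and back faces are the definitions of $\Delta$ and Theorem \ref{Theorem-HHHH_Connes}(2)), verifies it by the same chain-level diagram chase through $\theta$, $\vartheta$, $\psi$, $\gamma^\vee[-d]$ and $\tilde{\mu}^\vee[-d]$, and then concludes exactly as you do via $\Delta_\Omega(1)=0$ and Theorem \ref{Theorem-CYBV}. The one step you flag as the main obstacle — the explicit commutativity check with its Koszul signs and degree shift — is likewise the only nontrivial content of the paper's proof, and your identification of why it works (the CY structure of $\Omega A^\vee$ being built from the dualized $\gamma$ and $\tilde{\mu}$, per the remark preceding the theorem) matches the paper's reasoning.
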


\begin{proof} It follows from Theorem \ref{Theorem-Sym-Dual-CY} that $\Omega A^\vee$ is a $d$-CY dg algebra.
By the assumption $\Delta(1_{HH^{\bullet}(A)})=0$ and Theorem \ref{Theorem-DerSymBV},
we know $(HH^{\bullet}(A), \cup, \Delta)$ is a BV algebra.
To show that $(HH^\bullet(\Omega A^\vee), \cup, \Delta)$ is a BV algebra, by Theorem \ref{Theorem-CYBV},
it is enough to prove $\Delta(1_{HH^{\bullet}(\Omega A^\vee)})=0$.
For this, we have to give the Van den Bergh dual $D$ and the BV operator $\Delta=-D \circ B \circ D^{-1}$ explicitly.
Let $P = A \otimes_\pi BA \otimes_\pi A$ be the two-sided bar resolution of $A$ and
$\tilde{\mu} : P=A \otimes_\pi BA \otimes_\pi A \xrightarrow{\id \otimes \varepsilon \otimes \id} A \otimes K \otimes A = A \otimes A \xrightarrow{\mu} A$.
The isomorphism $A \cong A^\vee[-d]$ in $\mathcal{D}A^e$ provides a dg $A$-bimodule quasi-isomorphism $\gamma : P \rightarrow A^\vee[-d]$.
By the assumption that $A$ is finite dimensional, $\gamma^\vee[-d] : A=(A^\vee[-d])^\vee[-d] \rightarrow P^\vee[-d]$
and $\tilde{\mu}^\vee[-d] : A^\vee[-d] \rightarrow P^\vee[-d]$ are dg $A^\vee$-bicomodule quasi-isomorphisms.
Keep in mind the proofs of Theorem \ref{Theorem-Sym-Dual-CY} and Theorem \ref{Theorem-VdB-Dual},
we can determine $D$, and thus $\Delta$, explicitly by the following commutative diagram:

{\footnotesize $$\xymatrix{
H^\bullet(\Omega \otimes_{\Omega^e} (\Omega \otimes_\iota A \otimes_\iota\Omega))
\ar[d]_-{\cong}^-{H^{\bullet}(\id\otimes(\id\otimes\gamma^\vee[-d]\otimes \id))} \ar[r]_-{\cong}^-{\id \otimes \Phi} &
H^\bullet(\Omega \otimes_{\Omega^e} \Hom_{\Omega^e}(\Omega \otimes_\iota A^\vee \otimes_\iota\Omega,\Omega^e))
\ar[dd]_-{\cong}^-{H^\bullet(\id \otimes \tilde{\nu}^*)}\\
H^{\bullet}(\Omega\otimes(\Omega\otimes_{\iota}P^\vee[-d]\otimes_{\iota}\Omega))\ar[d]_-{\cong}^-{(H^\bullet(\id \otimes (\id \otimes \tilde{\mu}^\vee[-d] \otimes \id)))^{-1}} & \\
H^\bullet(\Omega \otimes_{\Omega^e} (\Omega \otimes_\iota A^\vee[-d] \otimes_\iota\Omega)) \ar[d]_-{\cong}^-{H^\bullet(\id \otimes \tilde{\nu})} &
H^\bullet(\Omega \otimes_{\Omega^e} \Hom_{\Omega^e}(\Omega \otimes_\pi B\Omega A^\vee \otimes_\pi\Omega,\Omega^e)) \ar[d]_-{\cong}\\
H^\bullet(\Omega \otimes_{\Omega^e} (\Omega \otimes_\pi B\Omega A^\vee \otimes_\pi\Omega)[-d]) \ar[d]_{\cong} &
H^\bullet(\Hom_{\Omega^e}(\Omega \otimes_\pi B\Omega A^\vee \otimes_\pi\Omega,\Omega)) \ar[d]_{\cong} \\
H^\bullet(\Omega \otimes^{\pi}_{K^e} B\Omega A^\vee[-d]) \ar@{=}[d]  & H^\bullet(\Hom_{K^e}^{\pi}(B\Omega A^\vee,\Omega)) \ar@{=}[d]\\
HH_{d-\bullet}(\Omega) \ar@{.>}[r]_-{\cong}^-{D} \ar[d]^-{-B} & HH^\bullet(\Omega) \ar@{.>}[d]^-{\Delta} \\
HH_{d-\bullet}(\Omega) \ar@{.>}[r]_-{\cong}^-{D} & HH^\bullet(\Omega).
}$$}
Now we consider the following cubic diagram
$$\xymatrixrowsep{4pc}\xymatrixcolsep{5.5pc}\xymatrix@!0{
& HH_{\bullet-d}(A)^\vee \ar[dl]_-{D}^-{\cong} \ar[rr]^(0.4){-B^\vee} \ar@{->}'[d][dd]^(0.4){h_\bullet}
& & HH_{\bullet-d}(A)^\vee \ar[dl]_-{D}^-{\cong} \ar[dd]^(0.7){h_\bullet} \\
HH^{\bullet}(A) \ar[rr]^(0.4){\Delta} \ar[dd]_(0.7){h^\bullet}
 & & HH^\bullet(A) \ar[dd]^(0.7){h^\bullet} \\
 & HH_{d-\bullet}(\Omega A^\vee) \ar[dl]_-{D}^-{\cong} \ar@{->}'[r]^(0.7){-B}[rr] & & HH_{d-\bullet}(\Omega A^\vee)\ar[dl]_{D}^-{\cong} \\
 HH^{\bullet}(\Omega A^\vee) \ar@{->}[rr]^(0.4){\Delta}
 & & HH^{\bullet}(\Omega A^\vee). }$$
We want to show the front square is commutative.
For this, it is enough to prove the other five squares are commutative.
The up square and the down square are commutative by the choices of operators $\Delta$.
By Theorem \ref{Theorem-HHHH_Connes}, the back square is commutative.

Next, we can check directly that the following diagram is commutative:
$$\xymatrixcolsep{8pc}\xymatrix{
\Hom_{K^e}^\pi(B\Omega A^\vee,\Omega) & \Hom_{K^e}^{\pi}(BA,A) \ar[dl]_-{\psi_*^{-1}\circ\phi} \ar[l]_-{\theta}\ar[ddd]^-{(\gamma^\vee[-d])_*} \\
\Hom_{K^e}^{\iota}(A^\vee,\Omega) \ar[u]^-{\widetilde{\tilde{\rho}^*}} & \\
\Omega\otimes_{K^e}^{\iota}A \ar[u]^-{\cong}\ar[d]_-{\id\otimes \gamma^\vee[-d]} & \\
\Omega\otimes_{K^e}^{\iota} P^\vee[-d] \ar[r]^-{\psi^{\prime}}
& \Hom_{K^e}^\pi(BA,P^\vee[-d]) \\
\Omega\otimes_{K^e}^{\iota}A^\vee[-d] \ar[d]_-{\tilde{\tilde{\nu}}[-d]}\ar[u]^-{\id\otimes \tilde{\mu}^\vee[-d]} & \Hom_{K^e}^\pi(BA,A^\vee[-d]) \ar[d]^-{\cong}\ar[u]_-{(\tilde{\mu}^\vee[-d])_*}\\
\Omega\otimes_{K^e}^\pi B\Omega A^\vee [-d]& (A\otimes_{K^e}^\pi BA)^\vee[-d], \ar[ul]_-{(\psi\otimes \id)^{-1}\circ \omega_{A,BA}^{-1}[-d]} \ar[l]^-{\vartheta[-d]}
}$$
where the map $\psi^{\prime}: \Omega\otimes_{K^e}^{\iota}P\rightarrow \Hom_{K^e}^{\pi}(BA,P)$ is given by
$\psi^\prime (f\otimes p)(b)=(-1)^{|p||b|}\psi(f)(b)\cdot p$ for all $f\in \Omega,\ p\in P$ and $b\in BA$.
After taking cohomologies, the left column in the above diagram from the bottom to the top is just the isomorphism
$D : HH_{d-\bullet}(\Omega A^\vee) \rightarrow HH^{\bullet}(\Omega A^\vee)$,
and the right column from the bottom to the top is just the isomorphism $D : HH_{\bullet-d}(A)^\vee \rightarrow HH^{\bullet}(A)$.
The commutativity of the above diagram implies that the left and the right squares in the cubic diagram are commutative.
Thus the whole cubic diagram is commutative.

Since $\Delta : HH^{\bullet}(A) \rightarrow HH_{\bullet}(A)$ satisfies $\Delta(1_{HH^{\bullet}(A)})=0$,
by the commutativity of the front square of the cubic commutative diagram,
we know $\Delta : HH^{\bullet}(\Omega A^\vee) \rightarrow HH_{\bullet}(\Omega A^\vee)$ satisfies $\Delta(1_{HH^{\bullet}(\Omega A^\vee)})=0$.
It follows from Theorem \ref{Theorem-DerSymBV} that $(HH^{\bullet}(\Omega A^\vee),\cup,\Delta)$ is a BV algebra.
Moreover, by the commutativity of the front square of the cubic commutative diagram again,
we obtain that $(HH^{\bullet}(A),\cup,\Delta)$ and $(HH^{\bullet}(\Omega A^\vee),\cup,\Delta)$ are isomorphic as BV algebras.
\end{proof}

For $d$-symmetric dg $K$-rings and their Koszul duals, we have the following result:

\begin{theorem} \label{Theorem-SymCY-BVIso} Let $A$ be a finite dimensional complete typical $d$-symmetric dg $K$-ring. Then

{\rm (1)} $(HH^\bullet(\Omega A^\vee), \cup, \Delta)$ is a BV algebra;

{\rm (2)} $(HH^\bullet(\Omega A^\vee), \cup, \Delta)$ and $(HH^{\bullet}(A), \cup, \Delta)$ are isomorphic as BV algebras.
\end{theorem}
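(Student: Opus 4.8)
The plan is to deduce this from the general result Theorem \ref{Theorem-DerSymCY-BVIso}, together with the automatic vanishing established in the proof of Theorem \ref{Theorem-SymBV}. First I would observe that a $d$-symmetric dg $K$-ring is in particular derived $d$-symmetric: a dg $A$-bimodule quasi-isomorphism $\kappa : A \to A^\vee[-d]$ is an isomorphism in $\mathcal{D}A^e$. Hence $A$ satisfies all the standing hypotheses of Theorem \ref{Theorem-DerSymCY-BVIso} except possibly the condition $\Delta(1_{HH^\bullet(A)})=0$, and the whole argument reduces to verifying this single identity.

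The key step is to show that, for a genuinely $d$-symmetric (not merely derived $d$-symmetric) dg $K$-ring, the condition $\Delta(1_{HH^\bullet(A)})=0$ holds automatically. This is exactly the content of the proof of Theorem \ref{Theorem-SymBV}: using the dg $A$-bimodule quasi-isomorphism $\kappa$, one constructs the explicit $0$-cocycle $\sigma = \varrho(\eta_A\varepsilon_{BA})$ of $(A[d]\otimes^\pi_{K^e}BA)^\vee$ whose Van den Bergh dual $D[\sigma]$ equals $1_{HH^\bullet(A)}$. Because the image of the Connes operator satisfies $\Im B \subseteq A\otimes_\pi\overline{BA}$, one gets $B^\vee\sigma = \sigma B = 0$, so $B^\vee[\sigma]=0$ and therefore $\Delta(1_{HH^\bullet(A)}) = -D(B^\vee[\sigma]) = 0$. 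I would emphasize that this computation uses the strict quasi-isomorphism $\kappa$ and does not require the extra hypothesis appearing in Theorem \ref{Theorem-DerSymCY-BVIso}.

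With $\Delta(1_{HH^\bullet(A)})=0$ in hand, I would simply invoke Theorem \ref{Theorem-DerSymCY-BVIso}: part (1) yields that $(HH^\bullet(\Omega A^\vee), \cup, \Delta)$ is a BV algebra, and part (2) yields that it is isomorphic as a BV algebra to $(HH^\bullet(A), \cup, \Delta)$. This establishes both assertions of the theorem simultaneously.

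I expect no serious obstacle here, since all the substantive work has already been carried out: the homological smoothness and $d$-CY property of $\Omega A^\vee$ (Theorems \ref{Theorem-Homologically-Smooth} and \ref{Theorem-Sym-Dual-CY}), the compatibility of the Connes operators through $h_\bullet$ (Theorem \ref{Theorem-HHHH_Connes}), and the cubic-diagram transport of $\Delta$ carried out in the proof of Theorem \ref{Theorem-DerSymCY-BVIso}. The only point requiring care is making the reduction explicit, namely confirming that the $d$-symmetry hypothesis feeds correctly into the automatic vanishing $\Delta(1_{HH^\bullet(A)})=0$ supplied by Theorem \ref{Theorem-SymBV}, and that this vanishing is precisely the one hypothesis of Theorem \ref{Theorem-DerSymCY-BVIso} not already guaranteed by the present assumptions.
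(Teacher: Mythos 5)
Your proposal is correct and follows essentially the same route as the paper: the authors likewise cite the proof of Theorem \ref{Theorem-SymBV} to obtain $\Delta(1_{HH^\bullet(A)})=0$ and then apply Theorem \ref{Theorem-DerSymCY-BVIso}. Your additional remarks (that $d$-symmetric implies derived $d$-symmetric, and the explicit role of the cocycle $\sigma$) just make explicit what the paper leaves implicit.
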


\begin{proof} By the proof of Theorem \ref{Theorem-SymBV}, we have $\Delta(1_{HH^{\bullet}(A)})=0$.
Then the theorem follows from Theorem \ref{Theorem-DerSymCY-BVIso}. \end{proof}

\begin{remark}{\rm
Since BV algebras are Gerstenhaber algebras, Theorem \ref{Theorem-SymCY-BVIso} implies that,
if $A$ is a finite dimensional complete typical $d$-symmetric dg $K$-ring then
$(HH^{\bullet}(A), \cup, [-,-])$ is isomorphic to $(HH^{\bullet}(\Omega A^\vee), \cup, [-,-])$ as Gerstenhaber algebras.
}\end{remark}

\bigskip

\noindent {\footnotesize {\bf ACKNOWLEDGEMENT.} The authors are sponsored by Project 11571341 NSFC.}

\footnotesize

\end{document}